\documentclass[vecarrow]{svmult}

\renewcommand{\email}[1]{\emailname: #1} 

\usepackage{mathptmx}       
\usepackage{helvet}         
\usepackage{courier}        

\usepackage{makeidx}         
\usepackage{graphicx}        
\usepackage[bottom]{footmisc}

\usepackage{latexsym}
\usepackage{amsmath}
\usepackage{amsfonts}
\usepackage{amssymb}
\usepackage{bm}

\usepackage{url}
\usepackage{algorithm}
\usepackage{algorithmic}
\usepackage[misc,geometry]{ifsym}

\renewenvironment{proof}{\noindent{\itshape Proof.}}{\smartqed\qed}

\spdefaulttheorem{assumption}{Assumption}{\upshape \bfseries}{\itshape}
\spdefaulttheorem{algo}{Algorithm}{\upshape \bfseries}{\itshape}











\DeclareSymbolFont{bbold}{U}{bbold}{m}{n}
\DeclareSymbolFontAlphabet{\mathbbold}{bbold}






\usepackage{tikz,thm-restate,cleveref}
\usepackage[caption=false]{subfig}
\usepackage{adjustbox}

\newcommand{\absAinsworthGlusa}[1]{\left|#1\right|}
\newcommand{\normAinsworthGlusa}[1]{\left|\!\left|#1\right|\!\right|}


\begin{document}

\title*{Towards an Efficient Finite Element Method for the Integral Fractional Laplacian on Polygonal Domains}
\titlerunning{Efficient Finite Element Method for the Integral Fractional Laplacian}

\author{Mark Ainsworth \and Christian Glusa}

\institute{
 Mark Ainsworth (\Letter)
 \at Division of Applied Mathematics, Brown University, 182 George St, Providence, RI 02912, USA\\
 \at Computer Science and Mathematics Division, Oak Ridge National Laboratory, Oak Ridge, TN 37831, USA\\
 \email{Mark\_Ainsworth@Brown.edu}
 \and
 Christian Glusa
 \at Division of Applied Mathematics, Brown University, 182 George St, Providence, RI 02912, USA \\
 \email{Christian\_Glusa@Brown.edu}\\
 \\
 This work was supported by the MURI/ARO on ``Fractional PDEs for Conservation Laws and Beyond: Theory, Numerics and Applications'' (W911NF-15-1-0562).
 }

\maketitle

\index{Ainsworth, Mark}
\index{Glusa, Christian}

\paragraph{Dedicated to Ian H.~Sloan on the occasion of his 80th birthday.}

\abstract{
  We explore the connection between fractional order partial differential equations in two or more spatial dimensions with boundary integral operators to develop techniques that enable one to efficiently tackle the integral fractional Laplacian.
  In particular, we develop techniques for the treatment of the dense stiffness matrix including the computation of the entries, the efficient assembly and storage of a sparse approximation and the efficient solution of the resulting equations.
  The main idea consists of generalising proven techniques for the treatment of boundary integral equations to general fractional orders.
  Importantly, the approximation does not make any strong assumptions on the shape of the underlying domain and does not rely on any special structure of the matrix that could be exploited by fast transforms.
  We demonstrate the flexibility and performance of this approach in a couple of two-dimensional numerical examples.
}

\section{Introduction}
\label{sec:introduction}

Large scale computational solution of partial differential equations has revolutionised the way in which scientific research is performed.
Historically, it was generally the case that the mathematical models, expressed in the form of partial differential equations involving operators such as the Laplacian, were impossible to solve analytically, and difficult to resolve numerically.
This led to a concerted and sustained research effort into the development of efficient numerical methods for approximating the solution of partial differential equations.
Indeed, many researchers who were originally interested in applications shifted research interests to the development and analysis of numerical methods.
A case in point is Professor Ian H. Sloan who originally trained as physicist but went on to carry out fundamental research in a wide range of areas relating to computational mathematics.
Indeed, one may struggle to find an area of computational mathematics in which Sloan has not made a contribution and the topic of the present article, fractional partial differential equations, may be one of the very few.

In recent years, there has been a burgeoning of interest in the use of non-local and fractional models.
To some extent, this move reflects the fact that with present day computational resources coupled with state of the art numerical algorithms, attention is now shifting back to the fidelity of the underlying mathematical models as opposed to their approximation.
Fractional equations have been used to describe phenomena in anomalous diffusion, material science, image processing, finance and electromagnetic fluids \cite{West2016_FractionalCalculusViewComplexity}.
Fractional order equations arise naturally as the limit of discrete diffusion governed by stochastic processes \cite{MeerschaertSikorskii2012_StochasticModelsFractionalCalculus}.

Whilst the development of fractional derivatives dates back to essentially the same time as their integer counterparts, the computational methods available for their numerical resolution drastically lags behind the vast array of numerical techniques from which one can choose to treat integer order partial differential equations.
The recent literature abounds with work on numerical methods for fractional partial differential equations in one spatial dimension and fractional order temporal derivatives.
However, with most applications of interest being posed on domains in two or more spatial dimensions, the solution of fractional equations posed on complex domains is a problem of considerable practical interest.

The archetypal elliptic partial differential equation is the Poisson problem involving the standard Laplacian.
By analogy, one can consider a fractional Poisson problem involving the fractional Laplacian.
The first problem one encounters is that of how to define a fractional Laplacian, particularly in the case where the domain is compact, and a number of alternatives have been suggested.
The \emph{integral fractional Laplacian} is obtained by restriction of the Fourier definition to functions that have prescribed value outside of the domain of interest, whereas the spectral fractional Laplacian is based on the spectral decomposition of the regular Laplace operator.
In general, the two operators are different \cite{ServadeiValdinoci2014_SpectrumTwoDifferentFractionalOperators}, and only coincide when the domain of interest is the full space.

The approximation of the integral fractional Laplacian using finite elements was considered by D'Elia and Gunzburger \cite{DEliaGunzburger2013_FractionalLaplacianOperatorBounded}.
The important work of Acosta and Borthagaray \cite{AcostaBorthagaray2015_FractionalLaplaceEquation} gave regularity results for the analytic solution of the fractional Poisson problem and obtained convergence rates for the finite element approximation supported by numerical examples computed using techniques described in \cite{AcostaBersetcheEtAl2016_ShortFeImplementation2d}.

The numerical treatment of fractional partial differential equations is rather different from the integer order case owing to the fact that the fractional derivative is a non-local operator.
This creates a number of issues including the fact that the resulting stiffness matrix is dense and, moreover, the entries in the matrix are given in terms of singular integrals.
In turn, these features create issues in the numerical computation of the entries and the need to store the entries of a dense matrix, not to mention the fact that a solution of the resulting matrix equation has to be computed.
The seasoned reader will readily appreciate that many of these issues are shared by boundary integral equations arising from classical integer order differential operators \cite{Sloan1992_ErrorAnalysisBoundaryIntegralMethods,SloanSpence1988_GalerkinMethodIntegralEquations,YanSloanEtAl1988_IntegralEquationsFirstKind}.
This similarity is not altogether surprising given that the boundary integral operators are pseudo-differential operators of fractional order.

A different, integer order operator based approach, was taken by Nochetto, Ot{\'a}rola and Salgado \cite{NochettoOtarolaEtAl2015_PdeApproachToFractional} for the case of the spectral Laplacian.
Caffarelli and Silvestre \cite{CaffarelliSilvestre2007_ExtensionProblemRelatedToFractionalLaplacian} showed that the operator can be realised as a Dirichlet-to-Neumann operator of an extended problem in the half space in \(d+1\) dimensions.

In the present work, we explore the connection with boundary integral operators to develop techniques that enable one to efficiently tackle the integral fractional Laplacian.
In particular, we develop techniques for the treatment of the stiffness matrix including the computation of the entries, the efficient storage of the resulting dense matrix and the efficient solution of the resulting equations.
The main ideas consist of generalising proven techniques for the treatment of boundary integral equations to general fractional orders.
Importantly, the approximation does not make any strong assumptions on the shape of the underlying domain and does not rely on any special structure of the matrix that could be exploited by fast transforms.
We demonstrate the flexibility and performance of this approach in a couple of two-dimensional numerical examples.

\section{The Integral Fractional Laplacian and Its Weak Formulation}
\label{sec:weak-formulation}

The fractional Laplacian in \(\mathbb{R}^{d}\) of order \(s\), for \(0<s<1\) and \(d\in\mathbb{N}\), of a function \(u\) can be defined by the Fourier transform \(\mathcal{F}\) as
\begin{align*}
  \left(-\Delta\right)^{s}u = \mathcal{F}^{-1}\left[\absAinsworthGlusa{\xi}^{2s} \mathcal{F}u\right].
\end{align*}
Alternatively, this expression can be rewritten \cite{Valdinoci2009_FromLongJumpRandom} in integral form as
\begin{align*}
  \left(-\Delta\right)^{s} u\left(\vec{x}\right) = C(d,s) \operatorname{p.v.} \int_{\mathbb{R}^{d}} \; d \vec{y} ~ \frac{u(\vec{x})-u(\vec{y})}{\absAinsworthGlusa{\vec{x}-\vec{y}}^{d+2s}}
\end{align*}
where
\begin{align*}
  C(d,s) = \frac{2^{2s}s\Gamma\left(s+\frac{d}{2}\right)}{\pi^{d/2}\Gamma\left(1-s\right)}
\end{align*}
is a normalisation constant and \(\operatorname{p.v.}\) denotes the Cauchy principal value of the integral \cite[Chapter 5]{Mclean2000_StronglyEllipticSystemsBoundaryIntegralEquations}.
In the case where \(s=1\) this operator coincides with the usual Laplacian.
If \(\Omega\subset\mathbb{R}^{d}\) is a bounded Lipschitz domain, we define the \emph{integral fractional Laplacian} \(\left(-\Delta\right)^{s}\) to be the restriction of the full-space operator to functions with compact support in \(\Omega\).
This generalises the homogeneous Dirichlet condition applied in the case \(s=1\) to the case \(s\in(0,1)\).

Define the usual fractional Sobolev space \(H^{s}\left(\mathbb{R}^{d}\right)\) via the Fourier transform.
If \(\Omega\) is a sub-domain as above, then we define the Sobolev space \(H^{s}\left(\Omega\right)\) to be
\begin{align*}
   H^{s}\left(\Omega\right)&:=\left\{u\in L^{2}\left(\Omega\right) \mid \normAinsworthGlusa{u}_{H^{s}\left(\Omega\right)} < \infty\right\},
\end{align*}
equipped with the norm
\begin{align*}
  \normAinsworthGlusa{u}_{H^{s}\left(\Omega\right)}^{2}&= \normAinsworthGlusa{u}_{L^{2}\left(\Omega\right)}^{2} + \int_{\Omega}\; d \vec{x} \int_{\Omega}\; d \vec{y} \frac{\left(u(\vec{x})-u(\vec{y})\right)^{2}}{\absAinsworthGlusa{\vec{x}-\vec{y}}^{d+2s}}.
\end{align*}
The space
\begin{align*}
  \widetilde{H}^{s}\left(\Omega\right)&:=\left\{u\in H^{s}\left(\mathbb{R}^{d}\right) \mid u=0 \text{ in } \Omega^{c}\right\}
\end{align*}
can be equipped with the energy norm
\begin{align*}
  \normAinsworthGlusa{u}_{\widetilde{H}^{s}\left(\Omega\right)} := \sqrt{\frac{C(d,s)}{2}}\absAinsworthGlusa{u}_{H^{s}\left(\mathbb{R}^{d}\right)},
\end{align*}
where the non-standard factor \(\sqrt{C(d,s)/2}\) is included for convenience.
For \(s>1/2\), \(\widetilde{H}^{s}\left(\Omega\right)\) coincides with the space \(H_{0}^{s}\left(\Omega\right)\) which is the closure of \(C_{0}^{\infty}\left(\Omega\right)\) with respect to the \(H^{s}\left(\Omega\right)\)-norm.
For \(s<1/2\), \(\widetilde{H}^{s}\left(\Omega\right)\) is identical to \(H^{s}\left(\Omega\right)\).
In the critical case \(s=1/2\), \(\widetilde{H}^{s}\left(\Omega\right)\subset H^{s}_{0}\left(\Omega\right)\), and the inclusion is strict.
(See for example \cite[Chapter 3]{Mclean2000_StronglyEllipticSystemsBoundaryIntegralEquations}.)

The usual approach to dealing with elliptic PDEs consists of obtaining a weak form of the operator by multiplying the equation by a test function and applying integration by parts \cite{ErnGuermond2004_TheoryPracticeFiniteElements}.
In contrast, for equations involving the fractional Laplacian \(\left(-\Delta\right)^{s}u\), we again multiply by a test function \(v\in\widetilde{H}^{s}\left(\Omega\right)\) and integrate over \(\mathbb{R}^{d}\), and then, instead of integration by parts, we use the identity
\begin{align*}
  \int_{\mathbb{R}^{d}} \; d \vec{x} \int_{\mathbb{R}^{d}} \; d \vec{y} \frac{\left(u\left(\vec{x}\right)-u\left(\vec{y}\right)v\left(\vec{x}\right)\right)}{\absAinsworthGlusa{\vec{x}-\vec{y}}^{d+2s}}
  &=-\int_{\mathbb{R}^{d}} \; d \vec{x} \int_{\mathbb{R}^{d}} \; d \vec{y} \frac{\left(u\left(\vec{x}\right)-u\left(\vec{y}\right)v\left(\vec{y}\right)\right)}{\absAinsworthGlusa{\vec{x}-\vec{y}}^{d+2s}}.
\end{align*}
Following this approach, since both \(u\) and \(v\) vanish outside of \(\Omega\), we arrive at the bilinear form
\begin{align*}
  a(u,v)
  &= b\left(u,v\right)
    + C(d,s) \int_{\Omega} \; d\vec{x} \int_{\Omega^{c}}\; d\vec{y}  \frac{u\left(\vec{x}\right)v\left(\vec{x}\right)}{\absAinsworthGlusa{\vec{x}-\vec{y}}^{d+2s}},
    \intertext{with}
    b(u,v)
  &=\frac{C(d,s)}{2} \int_{\Omega} \; d\vec{x} \int_{\Omega}\; d\vec{y}  \frac{\left(u\left(\vec{x}\right)-u\left(\vec{y}\right)\right)\left(v\left(\vec{x}\right)-v\left(\vec{y}\right)\right)}{\absAinsworthGlusa{\vec{x}-\vec{y}}^{d+2s}},
\end{align*}
corresponding to \(\left(-\Delta\right)^{s}\) on \(\widetilde{H}^{s}\left(\Omega\right) \times \widetilde{H}^{s}\left(\Omega\right)\).
The bilinear form \(a\left(\cdot,\cdot\right)\) is trivially seen to be \(\widetilde{H}^{s}\left(\Omega\right)\)-coercive and continuous and, as such, is amenable to treatment using the Lax-Milgram Lemma.

In this article we shall concern ourselves with the computational details needed to implement the finite element approximation of problems involving the fractional Laplacian.
To this end, the presence of the unbounded domain \(\Omega^{c}\) in the bilinear form \(a\left(\cdot,\cdot\right)\) is somewhat undesirable.
Fortunately, we can dispense with \(\Omega^{c}\) using the following argument.
The identity
\begin{align*}
  \frac{1}{\absAinsworthGlusa{\vec{x}-\vec{y}}^{d+2s}} &= \frac{1}{2s} \nabla_{\vec{y}}\cdot \frac{\vec{x}-\vec{y}}{\absAinsworthGlusa{\vec{x}-\vec{y}}^{d+2s}},
\end{align*}
enables the second integral to be rewritten using the Gauss theorem as
\begin{align*}
  \frac{C(d,s)}{2s} \int_{\Omega} \; d \vec{x} \int_{\partial\Omega} \; d \vec{y} \frac{u\left(\vec{x}\right) v\left(\vec{x}\right) ~ \vec{n}_{\vec{y}}\cdot\left(\vec{x}-\vec{y}\right)}{\absAinsworthGlusa{\vec{x}-\vec{y}}^{d+2s}},
\end{align*}
where \(\vec{n}_{y}\) is the \emph{inward} normal to \(\partial\Omega\) at \(\vec{y}\), so that the bilinear form can be expressed equivalently as
\begin{align*}
  a(u,v)
  &=\frac{C(d,s)}{2} \int_{\Omega} \; d\vec{x} \int_{\Omega}\; d\vec{y}  \frac{\left(u\left(\vec{x}\right)-u\left(\vec{y}\right)\right)\left(v\left(\vec{x}\right)-v\left(\vec{y}\right)\right)}{\absAinsworthGlusa{\vec{x}-\vec{y}}^{d+2s}} \\
  &\quad+ \frac{C(d,s)}{2s} \int_{\Omega} \; d \vec{x} \int_{\partial\Omega} \; d \vec{y} \frac{u\left(\vec{x}\right) v\left(\vec{x}\right) ~ \vec{n}_{\vec{y}}\cdot\left(\vec{x}-\vec{y}\right)}{\absAinsworthGlusa{\vec{x}-\vec{y}}^{d+2s}}.
\end{align*}
As an aside, we note that the bilinear form \(b\left(u,v\right)\) represents the so-called \emph{regional fractional Laplacian} \cite{BogdanBurdzyEtAl2003_CensoredStableProcesses,ChenKim2002_GreenFunctionEstimateCensoredStableProcesses}.
The regional fractional Laplacian can be interpreted as a generalisation of the usual Laplacian with homogeneous Neumann boundary condition for \(s=1\) to the case of fractional orders \(s\in(0,1)\).
It will transpire from our work that most of the presented techniques carry over to the regional fractional Laplacian by simply omitting the boundary integral terms.

\section{Finite Element Approximation of the Fractional Poisson Equation}
\label{sec:finite-elem-appr}

The \emph{fractional Poisson problem}
\begin{align*}
  \begin{aligned}
    \left(-\Delta\right)^{s}u &= f &&\text{in \(\Omega\),}\\
    u&=0&&\text{in \(\Omega^{c}\)}
  \end{aligned}
\end{align*}
takes the variational form
\begin{align}
  \text{Find } u\in \widetilde{H}^{s}\left(\Omega\right) : \quad a\left(u,v\right)=\left\langle f,v\right\rangle \quad \forall v\in \widetilde{H}^{s}\left(\Omega\right). \label{eq:fracPoisson}
\end{align}
Henceforth, let \(\Omega\) be a polygon, and let \(\mathcal{P}_{h}\) be a family of shape-regular and globally quasi-uniform triangulations of \(\Omega\), and \(\mathcal{P}_{h,\partial}\) the induced boundary meshes \cite{ErnGuermond2004_TheoryPracticeFiniteElements}.
Let \(\mathcal{N}_{h}\) be the set of vertices of \(\mathcal{P}_{h}\) and \(h_{K}\) be the diameter of the element \(K\in\mathcal{P}_{h}\), and \(h_{e}\) the diameter of \(e\in\mathcal{P}_{h,\partial}\).
Moreover, let \(h:=\max_{K\in\mathcal{P}_{h}}h_{K}\).
Let \(\phi_{i}\) be the usual piecewise linear basis function associated with a node \(\vec{z}_{i}\in\mathcal{N}_{h}\), satisfying \(\phi_{i}\left(\vec{z}_{j}\right)=\delta_{ij}\) for \(\vec{z}_{j}\in\mathcal{N}_{h}\), and let \(X_{h}:=\operatorname{span}\left\{\phi_{i}\mid \vec{z}_{i}\in\mathcal{N}_{h}\right\}\).
The finite element subspace \(V_{h}\subset \widetilde{H}^{s}\left(\Omega\right)\) is given by \(V_{h}=X_{h}\) when \(s<1/2\) and by
\begin{align*}
  V_{h} = \left\{v_{h}\in X_{h}\mid v_{h}=0 \text{ on }\partial\Omega\right\} = \operatorname{span}\left\{\phi_{i}\mid \vec{z}_{i}\not\in \partial\Omega\right\}
\end{align*}
when \(s\geq 1/2\).
The corresponding set of degrees of freedom \(\mathcal{I}_{h}\) for \(V_{h}\) is given by \(\mathcal{I}_{h}=\mathcal{N}_{h}\) when \(s<1/2\) and otherwise consists of nodes in the interior of \(\Omega\).
In both cases we denote the cardinality of \(\mathcal{I}_{h}\) by \(n\).
The set of degrees of freedom on an element \(K\in\mathcal{P}_{h}\) is denoted by \(\mathcal{I}_{K}\).

The stiffness matrix associated with the fractional Laplacian is defined to be \(\boldsymbol{A}^{s}=\left\{a\left(\phi_{i}, \phi_{j}\right)\right\}_{i,j}\), where
\begin{align*}
  a\left(\phi_{i},\phi_{j}\right)
  &= \frac{C(d,s)}{2} \int_{\Omega} \; d \vec{x} \int_{\Omega} \; d \vec{y} \frac{\left(\phi_{i}\left(\vec{x}\right)-\phi_{i}\left(\vec{y}\right)\right)\left(\phi_{j}\left(\vec{x}\right)-\phi_{j}\left(\vec{y}\right)\right)}{\absAinsworthGlusa{\vec{x}-\vec{y}}^{d+2s}} \\
  &\quad+ \frac{C(d,s)}{2s} \int_{\Omega} \; d \vec{x} \int_{\partial\Omega} \; d \vec{y} \frac{\phi_{i}\left(\vec{x}\right) \phi_{j}\left(\vec{x}\right) ~ \vec{n}_{\vec{y}}\cdot\left(\vec{x}-\vec{y}\right)}{\absAinsworthGlusa{\vec{x}-\vec{y}}^{d+2s}}.
\end{align*}
The existence of a unique solution to the fractional Poisson problem \cref{eq:fracPoisson} and its finite element approximation follows from the Lax-Milgram Lemma.
The rate of convergence of the finite element approximation is given by the following theorem:
\begin{theorem}[\cite{AcostaBorthagaray2015_FractionalLaplaceEquation}]\label{thm:Hsconv}
  If the family of triangulations \(\mathcal{P}_{h}\) is shape regular and globally quasi-uniform, and \(u\in H^{\ell}\left(\Omega\right)\), for \(0<s<\ell<1\) or \(1/2<s<1\) and \(1<\ell<2\), then
  \begin{align}
    \normAinsworthGlusa{u-u_{h}}_{\widetilde{H}^{s}\left(\Omega\right)}
    &\leq C\left(s,d\right) h^{\ell-s}\absAinsworthGlusa{u}_{H^{\ell}\left(\Omega\right)}.\label{eq:16}
  \end{align}
  In particular, by applying regularity estimates for \(u\) in terms of the data \(f\), the solution satisfies
  \begin{align*}
    \normAinsworthGlusa{u-u_{h}}_{\widetilde{H}^{s}\left(\Omega\right)} & \leq
                         \begin{cases}
                           C\left(s\right) h^{1/2}\absAinsworthGlusa{\log h} \normAinsworthGlusa{f}_{C^{1/2-s}\left(\Omega\right)} & \text{if } 0< s<1/2, \\
                           C h^{1/2}\absAinsworthGlusa{\log h} \normAinsworthGlusa{f}_{L^{\infty}\left(\Omega\right)} & \text{if } s=1/2, \\
                           \frac{C(s,\beta)}{2s-1} h^{1/2}\sqrt{\absAinsworthGlusa{\log h}} \normAinsworthGlusa{f}_{C^{\beta}\left(\Omega\right)} & \text{if } 1/2<s<1, \beta>0 \\
                         \end{cases}
  \end{align*}
\end{theorem}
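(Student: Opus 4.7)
The plan is to prove the a priori bound~\eqref{eq:16} by a Céa-type quasi-optimality argument combined with a sharp interpolation estimate in the non-local energy norm, and then to derive the explicit rates by feeding in known regularity results for the fractional Poisson problem.

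For~\eqref{eq:16} I would first use the coercivity and continuity of $a(\cdot,\cdot)$ on $\widetilde{H}^{s}(\Omega)$, noted just after its definition, together with Galerkin orthogonality $a(u-u_{h},v_{h})=0$ for all $v_{h}\in V_{h}$, to reduce the task to the best approximation bound
\begin{align*}
\normAinsworthGlusa{u-u_{h}}_{\widetilde{H}^{s}\left(\Omega\right)} \leq C(s,d)\inf_{v_{h}\in V_{h}}\normAinsworthGlusa{u-v_{h}}_{\widetilde{H}^{s}\left(\Omega\right)}.
\end{align*}
A natural candidate is a Scott--Zhang or Clément type quasi-interpolant $I_{h}u$: for $s<1/2$ any bounded projection onto $X_{h}$ suffices since $V_{h}=X_{h}$, whereas for $s\geq 1/2$ one needs a variant that preserves the homogeneous boundary condition so that $I_{h}u\in V_{h}$. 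On each simplex $K$ one has the classical local bound $\absAinsworthGlusa{u-I_{h}u}_{H^{s}(K)}\leq C\,h_{K}^{\ell-s}\absAinsworthGlusa{u}_{H^{\ell}(\omega_{K})}$, and the portion of the Gagliardo double integral over $\mathbb{R}^{d}\times\mathbb{R}^{d}$ arising from pairs $(\vec{x},\vec{y})$ in distinct elements is controlled by a dyadic decomposition of the $\vec{y}$-integration into annuli around $\vec{x}\in K$. Summing the local contributions with the help of shape regularity and quasi-uniformity of $\mathcal{P}_{h}$ then yields the desired $h^{\ell-s}$ rate.

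To extract the explicit rates I would substitute known regularity information for $u$. Solutions of the fractional Poisson problem on a Lipschitz polygon exhibit the universal boundary behaviour $u(\vec{x})\sim\operatorname{dist}(\vec{x},\partial\Omega)^{s}$, which forces $u\in H^{s+1/2-\varepsilon}(\Omega)$ for every $\varepsilon>0$ whenever $f$ lies in the corresponding Hölder or $L^{\infty}$ class, with a norm that grows as $\varepsilon\to 0$. Inserting $\ell=s+1/2-\varepsilon$ into~\eqref{eq:16} gives an error of order $C(\varepsilon)\,h^{1/2-\varepsilon}$, and balancing $h^{-\varepsilon}$ against the growth of $C(\varepsilon)$ by taking $\varepsilon\sim 1/\absAinsworthGlusa{\log h}$ produces the stated logarithmic correction, the precise power $\sqrt{\absAinsworthGlusa{\log h}}$ versus $\absAinsworthGlusa{\log h}$ reflecting whether $C(\varepsilon)$ blows up as $\varepsilon^{-1/2}$ or faster in the relevant Sobolev scale.

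The main obstacle will be the interpolation estimate in the genuinely non-local norm $\normAinsworthGlusa{\cdot}_{\widetilde{H}^{s}(\Omega)}$. A naive element-by-element bound captures only interactions within a single element patch, whereas the double integral over $\mathbb{R}^{d}\times\mathbb{R}^{d}$ involves pairs of points in arbitrarily distant elements, together with the boundary contribution that arose after applying Gauss's theorem in Section~\ref{sec:weak-formulation}. Controlling these long-range terms requires a careful dyadic decomposition and exploitation of the algebraic decay of the singular kernel, much in the spirit of the classical interpolation analysis for Galerkin boundary element methods; a secondary delicate point is tracking the sharp $\varepsilon$-dependence of the constants through both the interpolation step and the regularity step, so as to distinguish correctly the $\absAinsworthGlusa{\log h}$ factor arising when $s\leq 1/2$ from the $\sqrt{\absAinsworthGlusa{\log h}}$ factor arising when $s>1/2$.
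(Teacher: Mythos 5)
Your proposal matches the argument behind this (cited) result: the paper itself does not reprove \Cref{thm:Hsconv} but imports it from Acosta--Borthagaray, whose proof is exactly the route you describe --- quasi-optimality from coercivity/continuity and Galerkin orthogonality, a Scott--Zhang type interpolation estimate \(\normAinsworthGlusa{u-\Pi_{h}u}_{\widetilde{H}^{s}(\Omega)}\leq Ch^{\ell-s}\absAinsworthGlusa{u}_{H^{\ell}(\Omega)}\) (the very estimate the paper later reuses when invoking Strang's lemma), and then \(H^{s+1/2-\epsilon}\) regularity with the choice \(\epsilon\sim 1/\absAinsworthGlusa{\log h}\), the \(\absAinsworthGlusa{\log h}\) versus \(\sqrt{\absAinsworthGlusa{\log h}}\) factors coming from the \(\epsilon^{-1}\) versus \(\epsilon^{-1/2}\) blow-up of the regularity constants. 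So the proposal is correct and essentially the same approach.
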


Moreover, using a standard Aubin-Nitsche argument \cite[Lemma 2.31]{ErnGuermond2004_TheoryPracticeFiniteElements} gives estimates in \(L^{2}\left(\Omega\right)\):
\begin{theorem}[\cite{BorthagarayDelEtAl2016_FiniteElementApproximationFractionalEigenvalueProblem}]\label{thm:L2conv}
  If the family of triangulations \(\mathcal{P}_{h}\) is shape regular and globally quasi-uniform, and, for \(\epsilon>0\), \(u\in H^{s+1/2-\epsilon}\left(\Omega\right)\), then
  \begin{align*}
    \normAinsworthGlusa{u-u_{h}}_{L^{2}}\leq
    \begin{cases}
      C(s,\epsilon)h^{1/2+s-\epsilon}\absAinsworthGlusa{u}_{H^{s+1/2-\epsilon}\left(\Omega\right)} & \text{if } 0< s< 1/2, \\
      C(s,\epsilon)h^{1-2\epsilon}\absAinsworthGlusa{u}_{H^{s+1/2-\epsilon}\left(\Omega\right)} & \text{if } 1/2\leq s <1.
    \end{cases}
  \end{align*}
\end{theorem}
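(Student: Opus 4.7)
The proof will proceed by the standard Aubin--Nitsche duality argument, adapted to the fractional setting. For the error $e := u - u_h$, I introduce the dual problem: find $w \in \widetilde{H}^s(\Omega)$ such that
\begin{equation*}
a(v,w) = \langle e, v\rangle_{L^2(\Omega)} \qquad \forall v \in \widetilde{H}^s(\Omega),
\end{equation*}
which is uniquely solvable by Lax--Milgram (using symmetry of $a(\cdot,\cdot)$). Testing with $v = e$ and invoking the Galerkin orthogonality $a(e, w_h) = 0$ for every $w_h \in V_h$, followed by $\widetilde{H}^s$-continuity, yields
\begin{equation*}
\normAinsworthGlusa{e}_{L^2(\Omega)}^2 = a(e, w) = a(e, w - w_h) \leq C \normAinsworthGlusa{e}_{\widetilde{H}^s(\Omega)} \normAinsworthGlusa{w - w_h}_{\widetilde{H}^s(\Omega)}
\end{equation*}
for any choice of $w_h \in V_h$.

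The two factors on the right can be bounded using Theorem~\ref{thm:Hsconv}, applied first to the primal problem and then to the Galerkin approximation of the dual problem. The primal estimate gives directly $\normAinsworthGlusa{e}_{\widetilde{H}^s(\Omega)} \leq C h^{1/2 - \epsilon} \absAinsworthGlusa{u}_{H^{s+1/2-\epsilon}(\Omega)}$ under the hypothesis. For the dual factor, I need a regularity shift for the fractional Poisson problem with merely $L^2$ data: combining the mapping property $(-\Delta)^s : \widetilde{H}^t(\Omega) \to H^{t-2s}(\Omega)$ with the known boundary-layer behaviour $w \sim \mathrm{dist}(\cdot,\partial\Omega)^s$ yields $w \in H^{\min(2s - \epsilon,\, s + 1/2 - \epsilon)}(\Omega)$ with the associated seminorm controlled by $C\normAinsworthGlusa{e}_{L^2(\Omega)}$.

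The two cases of the theorem correspond precisely to which threshold in this minimum is active. For $s \geq 1/2$ the boundary-layer exponent $s + 1/2 - \epsilon$ is the smaller one, and Theorem~\ref{thm:Hsconv} delivers $\normAinsworthGlusa{w - w_h}_{\widetilde{H}^s(\Omega)} \leq C h^{1/2 - \epsilon} \normAinsworthGlusa{e}_{L^2(\Omega)}$; combining with the primal rate and dividing through by $\normAinsworthGlusa{e}_{L^2(\Omega)}$ produces $h^{1 - 2\epsilon}$. For $0 < s < 1/2$ the Sobolev shift $2s - \epsilon$ is active because $L^2$ data is too rough to realise the full boundary regularity, giving only $\normAinsworthGlusa{w - w_h}_{\widetilde{H}^s(\Omega)} \leq C h^{s - \epsilon} \normAinsworthGlusa{e}_{L^2(\Omega)}$, which multiplies with the primal $h^{1/2 - \epsilon}$ factor to give $h^{1/2 + s - \epsilon}$.

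The main obstacle I anticipate is the sharp dual-regularity estimate in the $s < 1/2$ regime: one must show quantitatively that $L^2$ data produce an $H^{2s - \epsilon}$ solution of the fractional Poisson problem with a bound linear in $\normAinsworthGlusa{e}_{L^2(\Omega)}$. The $s \geq 1/2$ case follows from essentially classical lifting, but the sub-critical case requires the more delicate regularity analysis developed in \cite{BorthagarayDelEtAl2016_FiniteElementApproximationFractionalEigenvalueProblem}; once that ingredient is in hand, the remainder of the proof is a textbook Aubin--Nitsche reduction.
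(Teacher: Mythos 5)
Your proposal is correct and follows essentially the same route the paper indicates: the result is quoted from \cite{BorthagarayDelEtAl2016_FiniteElementApproximationFractionalEigenvalueProblem} and obtained by the standard Aubin--Nitsche duality argument, combining the primal rate of \Cref{thm:Hsconv} with the dual regularity shift \(w\in H^{s+\min(s,1/2)-\epsilon}(\Omega)\) for \(L^{2}\) data. The only cosmetic difference is the \(\epsilon\)-bookkeeping (your argument yields \(h^{1/2+s-2\epsilon}\) in the sub-critical case, which is equivalent to the stated bound after renaming \(\epsilon\)), and, as you note, the delicate ingredient is precisely the quoted regularity estimate for the dual problem rather than the duality reduction itself.
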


When \(s=1\) classical results \cite[Theorem 3.16 and Theorem 3.18]{ErnGuermond2004_TheoryPracticeFiniteElements} show that if \(u\in H^{\ell}\left(\Omega\right)\), \(1<\ell\leq2\),
\begin{align*}
  \normAinsworthGlusa{u-u_{h}}_{H^{1}_{0}\left(\Omega\right)}\leq C h^{\ell-1}\absAinsworthGlusa{u}_{H^{\ell}\left(\Omega\right)},\\
  \normAinsworthGlusa{u-u_{h}}_{L^{2}\left(\Omega\right)}\leq C h^{\ell}\absAinsworthGlusa{u}_{H^{\ell}\left(\Omega\right)},
\end{align*}
so that \eqref{eq:16} can be seen as a generalisation to the case \(s\in(0,1)\).
For \(s=1\), \(u\in H^{2}\left(\Omega\right)\) if the domain is of class \(C^{2}\) or a convex polygon and if \(f\in L^{2}\left(\Omega\right)\) \cite[Theorems 3.10 and 3.12]{ErnGuermond2004_TheoryPracticeFiniteElements}.
However, when \(s\in(0,1)\), higher order regularity of the solution is not guaranteed under such conditions.

For example, consider the problem
\begin{align*}
  \begin{aligned}
    \left(-\Delta\right)^{s}u^{s}(\vec{x}) &=1 && \text{in } \Omega=\left\{\vec{x}\in\mathbb{R}^{2}\mid \absAinsworthGlusa{\vec{x}}< 1 \right\},\\
    u^{s}\left(\vec{x}\right)&=0 && \text{in } \Omega^{c},
  \end{aligned}
\end{align*}
with analytic solution \cite{Getoor1961_FirstPassageTimesSymmetric}
\begin{align*}
  u^{s}\left(\vec{x}\right) := \frac{2^{-2s}}{\Gamma\left(1+s\right)^{2}} \left(1-\absAinsworthGlusa{\vec{x}}^{2}\right)^{s}.
\end{align*}
Although the domain is \(C^{\infty}\) and the right-hand side is smooth, \(u^{s}\) is only in \(H^{s+1/2-\epsilon}\left(\Omega\right)\) for any \(\epsilon>0\).
Sample solutions for \(s\in\left\{0.25, 0.75\right\}\) are shown in \Cref{fig:solutions}.
\begin{figure}
  \centering
  \includegraphics{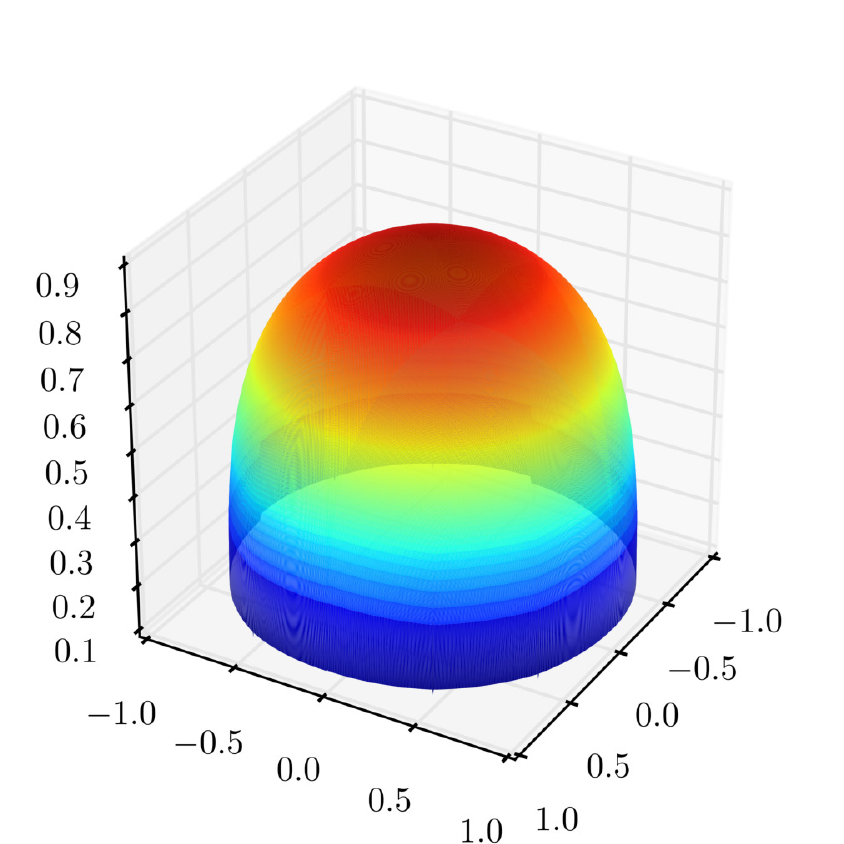}
  \includegraphics{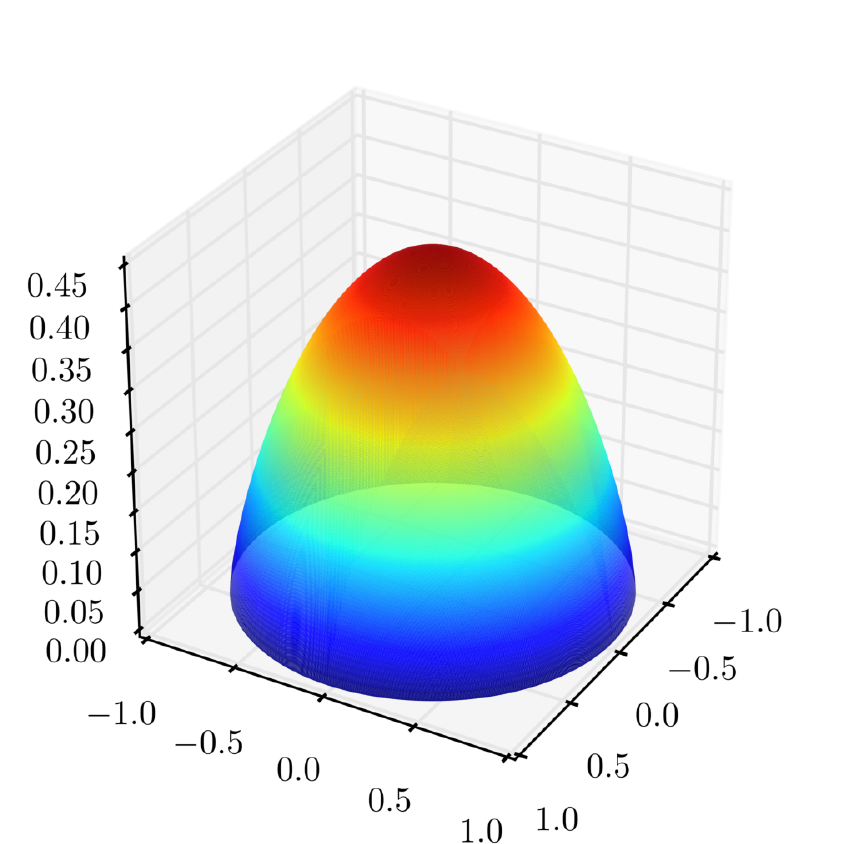}
  \caption{Solutions \(u^{s}\) to the fractional Poisson equation with constant right-hand side for \(s=0.25\) (\emph{top}) and \(s=0.75\) (\emph{bottom}).}
  \label{fig:solutions}
\end{figure}

\section{Computation of Entries of the Stiffness Matrix}
\label{sec:comp-entr-stiffn}

The computation of entries of the stiffness matrix \(\boldsymbol{A}^{s}\) in the case of the usual Laplacian (\(s=1\)) is straightforward.
However, for \(s\in(0,1)\), the bilinear form contains factors \(\absAinsworthGlusa{\vec{x}-\vec{y}}^{-d-2s}\) which means that simple closed forms for the entries are no longer available and suitable quadrature rules therefore must be identified.
Moreover, the presence of a repeated integral over \(\Omega\) (as opposed to an integral over just \(\Omega\) in the case \(s=1\)) means that the matrix needs to be assembled in a double loop over the elements of the mesh so that the computational cost is potentially much larger than in the integer \(s=1\) case.
Additionally, every degree of freedom is coupled to all other degrees of freedom and the stiffness matrix is therefore dense.

\subsection{Reduction to Smooth Integrals}
\label{sec:reduct-smooth-integr}

In order to compute the entries of \(\boldsymbol{A}^{s}=\left\{a\left(\phi_{i},\phi_{j}\right)\right\}_{ij}\) we decompose the expression for the entries into contributions from elements \(K,\tilde{K}\in\mathcal{P}_{h}\) and external edges \(e\in\mathcal{P}_{h,\partial}\):
\begin{align*}
  a(\phi_{i},\phi_{i}) &= \sum_{K}\sum_{\tilde{K}} a^{K\times\tilde{K}}(\phi_{i},\phi_{j}) + \sum_{K}\sum_{e} a^{K\times e}(\phi_{i},\phi_{j}),
\end{align*}
where the contributions \(a^{K\times\tilde{K}}\) and \(a^{K\times e}\) are given by:
\begin{align}
  a^{K\times\tilde{K}}(\phi_{i},\phi_{j})
  & = \frac{C(d,s)}{2} \int_{K} \; d \vec{x} \int_{\tilde{K}} \; d \vec{y} \frac{\left(\phi_{i}(\vec{x})-\phi_{i}(\vec{y})\right)\left(\phi_{j}(\vec{x})-\phi_{j}(\vec{y})\right)}{\absAinsworthGlusa{\vec{x}-\vec{y}}^{d+2s}}, \label{eq:6}\\
    a^{K\times e}(\phi_{i},\phi_{j})
  &= \frac{C(d,s)}{2s} \int_{K} \; d \vec{x} \int_{e} \; d \vec{y} \frac{\phi_{i}\left(\vec{x}\right) \phi_{j}\left(\vec{x}\right) ~ \vec{n}_{e}\cdot\left(\vec{x}-\vec{y}\right)}{\absAinsworthGlusa{\vec{x}-\vec{y}}^{d+2s}}.\label{eq:7}
\end{align}
Although the following approach holds for arbitrary spatial dimension \(d\), we restrict ourselves to \(d=2\) dimensions.
In evaluating the contributions \(a^{K\times\tilde{K}}\) over element pairs \(K\times\tilde{K}\), several cases need to be distinguished:
\begin{enumerate}
\item \(K\) and \(\tilde{K}\) have empty intersection,
\item \(K\) and \(\tilde{K}\) are identical,
\item \(K\) and \(\tilde{K}\) share an edge,
\item \(K\) and \(\tilde{K}\) share a vertex.
\end{enumerate}
These cases are illustrated in \Cref{fig:elementConfigs}.
\begin{figure}
  \centering
  \begin{tikzpicture}
    \coordinate (node0) at (0,0);
    \coordinate (node1) at (1,0);
    \coordinate (node2) at (0.5,0.866);
    \coordinate (node3) at (1.5,0.866);
    \coordinate (node4) at (2,0);
    \coordinate (node5) at (0.5,-0.866);
    \coordinate (node6) at (1.5,-0.866);
    \coordinate (node7) at (2.5,0.866);
    \coordinate (node8) at (2.5,-0.866);
    \coordinate (node9) at (3,0);
    \coordinate (node10) at (3.5,0.866);
    \coordinate (node11) at (3.5,-0.866);
    \coordinate (node12) at (4,0);
    \coordinate (node13) at (4.5,0.866);
    \coordinate (node14) at (4.5,-0.866);
    \coordinate (node15) at (5,0);
    \coordinate (node16) at (5.5,0.866);
    \coordinate (node17) at (5.5,-0.866);
    \coordinate (node18) at (6,0);

    \draw[fill=red] (node0) -- (node1) -- (node2) -- cycle;
    \draw (node1) -- (node3) -- (node2) -- cycle;
    \draw (node1) -- (node4) -- (node3) -- cycle;
    \draw (node0) -- (node1) -- (node5) -- cycle;
    \draw[fill=yellow] (node1) -- (node4) -- (node6) -- cycle;
    \draw[fill=yellow] (node1) -- (node5) -- (node6) -- cycle;

    \draw (node4) -- (node3) -- (node7) -- cycle;
    \draw (node4) -- (node6) -- (node8) -- cycle;
    \draw[fill=blue] (node4) -- (node9) -- (node7) -- cycle;
    \draw (node4) -- (node9) -- (node8) -- cycle;

    \draw (node9) -- (node7) -- (node10) -- cycle;
    \draw (node9)[fill=blue] -- (node8) -- (node11) -- cycle;
    \draw (node9) -- (node12) -- (node10) -- cycle;
    \draw (node9) -- (node12) -- (node11) -- cycle;

    \draw[fill=green] (node12) -- (node10) -- (node13) -- cycle;
    \draw (node12) -- (node11) -- (node14) -- cycle;
    \draw (node12) -- (node15) -- (node13) -- cycle;
    \draw (node12) -- (node15) -- (node14) -- cycle;

    \draw (node15) -- (node13) -- (node16) -- cycle;
    \draw (node15) -- (node14) -- (node17) -- cycle;
    \draw[fill=green] (node15) -- (node18) -- (node16) -- cycle;
    \draw (node15) -- (node18) -- (node17) -- cycle;
  \end{tikzpicture}
  \caption{Element pairs that are treated separately. We distinguish element pairs of identical elements (\emph{red}), element pairs with common edge (\emph{yellow}), with common vertex (\emph{blue}) and separated elements (\emph{green}).}\label{fig:elementConfigs}
\end{figure}
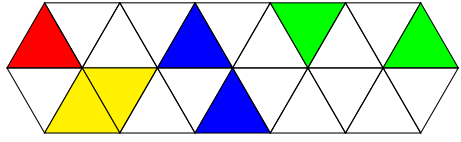
In case 1, where the elements do not touch, the Stroud conical quadrature rule \cite{Stroud1971_ApproximateCalculationMultipleIntegrals} (or any other suitable Gauss rule on simplices) of \emph{sufficiently high order} can be used to approximate the integrals.
More details as to what constitutes a sufficiently high order are given in \Cref{sec:quadrature-errors}.

Special care has to be taken in the remaining cases 2-4, in which the elements are touching, owing to the presence of a singularity in the integrand.
Fortunately, the singularity is removable and can, as pointed out in \cite{AcostaBorthagaray2015_FractionalLaplaceEquation}, be treated using standard techniques from the boundary element literature \cite{SauterSchwab2010_BoundaryElementMethods}.
More specifically, we write the integral as a sum of integrals over sub-simplices.
Each sub-simplex is then mapped onto the hyper-cube \([0,1]^{4}\) using the Duffy transformation \cite{Duffy1982_QuadratureOverPyramidOr}.
The advantage of pursuing this approach is that the singularity arising from the degenerate nature of the Duffy transformation offsets the singularity present in the integrals.
For example, we obtain the following expressions
\begin{multline}
  a^{K\times\tilde{K}}(\phi_{i},\phi_{j})=
  \frac{C(2,s)}{2} \frac{\absAinsworthGlusa{K}}{\absAinsworthGlusa{\hat{K}}} \frac{\absAinsworthGlusa{\tilde{K}}}{\absAinsworthGlusa{\hat{K}}} \sum_{\ell=1}^{L_{c}} \int_{[0,1]^{4}} \; d \vec{\eta} ~ \Bar J^{(\ell,c)}\frac{\Bar \psi_{k(i)}^{(\ell,c)}\left(\vec{\eta}\right) \Bar \psi_{k(j)}^{(\ell,c)}\left(\vec{\eta}\right)}{\absAinsworthGlusa{\sum_{k=0}^{6-c}\Bar\psi_{k}^{(\ell,c)}\left(\vec{\eta}\right) \vec{x}_{k}}^{2+2s}}, \label{eq:8}
\end{multline}
and
\begin{multline}
  a^{K\times e}(\phi_{i},\phi_{j}) =\\
  \frac{C(2,s)}{2s} \frac{\absAinsworthGlusa{K}}{\absAinsworthGlusa{\hat{K}}} \frac{\absAinsworthGlusa{e}}{\absAinsworthGlusa{\hat{e}}} \sum_{\ell=1}^{L_{c}} \int_{[0,1]^{3}} \; d \vec{\eta} ~ \Bar J^{(\ell,c)}
  \frac{\phi_{k(i)}^{(\ell,c)}\left(\vec{\eta}\right) \phi_{k(j)}^{(\ell,c)}\left(\vec{\eta}\right) ~ \sum_{k=0}^{5-c}\Bar\psi_{k}^{(\ell,c)}\left(\vec{\eta}\right) \vec{n}_{e}\cdot\vec{x}_{k}}{\absAinsworthGlusa{\sum_{k=0}^{5-c}\Bar\psi_{k}^{(\ell,c)}\left(\vec{\eta}\right) \vec{x}_{k}}^{2+2s}}\label{eq:9}
\end{multline}
in which the singularity \(\absAinsworthGlusa{\vec{x}-\vec{y}}^{-d-2s}\) is no longer present.
The derivations of the terms involved can be found in \cite{SauterSchwab2010_BoundaryElementMethods,AcostaBersetcheEtAl2016_ShortFeImplementation2d} and, for completeness, are summarised in the appendix, along with the notations used in equations \cref{eq:8,eq:9}.
Removing the singularity means that the integrals in \cref{eq:8,eq:9} are amenable to approximation using standard Gaussian quadrature rules of \emph{sufficiently high  order} as discussed in \Cref{sec:quadrature-errors}.
The same idea is applicable in any number of space dimensions.

\subsection{Determining the Order of the Quadrature Rules}
\label{sec:quadrature-errors}

The foregoing considerations show that the evaluation of the entries of the stiffness matrix boils down to the evaluation of integrals with smooth integrands, i.e. expressions \cref{eq:6,eq:7} for case 1 and expressions \cref{eq:8,eq:9} for case 2-4.
As mentioned earlier, it is necessary to use a \emph{sufficiently high order} quadrature rule to approximate these integrals.
We now turn to the question of how high is sufficient.

The arguments used to prove the ensuing estimates follow a pattern similar to the proofs of Theorems 5.3.29, 5.3.23 and 5.3.24 in \cite{SauterSchwab2010_BoundaryElementMethods}.
The main difference from \cite{SauterSchwab2010_BoundaryElementMethods} is the presence of the boundary integral term.
More details on the development of this type of quadrature rules in the context of boundary element methods can be found in the work of Erichsen and Sauter \cite{ErichsenSauter1998_EfficientAutomaticQuadrature3}.

\begin{restatable}{theorem}{consistencyErrorQuadrature}
  For \(d=2\), let \(\mathcal{I}_{K}\) index the degrees of freedom on \(K\in\mathcal{P}_{h}\), and define \(\mathcal{I}_{K\times\tilde{K}}:=\mathcal{I}_{K}\cup\mathcal{I}_{\tilde{K}}\).
  Let \(k_{T}\) (respectively \(k_{T,\partial}\)) be the quadrature order used for touching pairs \(K\times\tilde{K}\) (respectively \(K\times e\)), and let \(k_{NT}\left(K,\tilde{K}\right)\) (respectively \(k_{NT,\partial}\left(K,e\right)\)) be the quadrature order used for pairs that have empty intersection.
  Denote the resulting approximation to the bilinear form \(a\left(\cdot,\cdot\right)\) by \(a_{Q}\left(\cdot,\cdot\right)\).
  Then the  consistency error due to quadrature is bounded by
  \begin{align*}
    \absAinsworthGlusa{a(u,v)-a_{Q}(u,v)}&\leq C \left(E_{T} + E_{NT} + E_{T,\partial} + E_{NT,\partial}\right) \normAinsworthGlusa{u}_{L^{2}\left(\Omega\right)} \normAinsworthGlusa{v}_{L^{2}\left(\Omega\right)} \quad \forall u,v \in V_{h},
  \end{align*}
  where the errors are given by
  \begin{align*}
    E_{T}&=h^{-2-2s}\rho_{1}^{-2k_{T}},\\
    E_{NT} &= \max_{K,\tilde{K}\in\mathcal{P}_{h}, \overline{K}\cap\overline{\tilde{K}}=\emptyset} h^{-2}d_{K,\tilde{K}}^{-2s}\left(\rho_{2}\frac{d_{K,\tilde{K}}}{h}\right)^{-2k_{NT}\left(K,\tilde{K}\right)}, \\
    E_{T,\partial} &= h^{-1-2s}\rho_{3}^{-2k_{T,\partial}},\\
    E_{NT,\partial} &= \max_{K\in\mathcal{P}_{h}, e\in\mathcal{P}_{h,\partial}, \overline{K}\cap\overline{e}=\emptyset} h^{-1}d_{K,e}^{-2s}\left(\rho_{4}\frac{d_{K,e}}{h}\right)^{-2k_{NT,\partial}\left(K,e\right)},
  \end{align*}
  \(d_{K,\tilde{K}}:=\inf_{\vec{x}\in K, \vec{y}\in\tilde{K}}\absAinsworthGlusa{\vec{x}-\vec{y}}\), \(d_{K,e}:=\inf_{\vec{x}\in K, \vec{y}\in e}\absAinsworthGlusa{\vec{x}-\vec{y}}\), and \(\rho_{j}>1\), \(j=1,2,3,4\), are constants.
\end{restatable}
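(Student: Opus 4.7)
The plan is to bound the quadrature error at the level of each element-pair contribution in the decomposition \cref{eq:6,eq:7}/\cref{eq:8,eq:9}, and then aggregate across all pairs into the stated $L^{2}(\Omega)$ bound. Writing $u=\sum_{i}u_{i}\phi_{i}$ and $v=\sum_{j}v_{j}\phi_{j}$, the quantity $a(u,v)-a_{Q}(u,v)$ equals $\boldsymbol{u}^{\top}\boldsymbol{E}\,\boldsymbol{v}$, and the error matrix $\boldsymbol{E}$ decomposes naturally as $\boldsymbol{E}_{T}+\boldsymbol{E}_{NT}+\boldsymbol{E}_{T,\partial}+\boldsymbol{E}_{NT,\partial}$ according to the four types of contributions. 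Each piece will be estimated by its entrywise supremum together with its sparsity pattern, and the conversion $\normAinsworthGlusa{\boldsymbol{u}}_{\infty}\lesssim h^{-1}\normAinsworthGlusa{u}_{L^{2}(\Omega)}$ (an inverse estimate combined with the mass-matrix scaling $\normAinsworthGlusa{u}_{L^{2}(\Omega)}^{2}\sim h^{2}\normAinsworthGlusa{\boldsymbol{u}}_{2}^{2}$) produces the overall factors $h^{-1}$ and $h^{-2}$ that appear in the four expressions $E_{\cdot}$.

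For the touching configurations (cases 2--4 of \cref{fig:elementConfigs}) the Duffy-transformed integrand in \cref{eq:8} is real-analytic on the reference cube $[0,1]^{4}$. The first key step is to exhibit a fixed product of Bernstein ellipses, uniform in the configuration of $K$ and $\tilde{K}$, on which the complexified denominator $\absAinsworthGlusa{\sum_{k}\Bar{\psi}_{k}^{(\ell,c)}\vec{x}_{k}}^{2+2s}$ stays bounded away from zero and the degenerate Duffy Jacobian $\Bar{J}^{(\ell,c)}$ continues to cancel that zero; shape regularity of $\mathcal{P}_{h}$ is what guarantees a common Bernstein parameter $\rho_{1}>1$ independent of the pair. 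The maximum modulus of the extended integrand scales as $h^{-2-2s}$ because the physical vertices $\vec{x}_{k}$ scale linearly in $h$, and classical exponential convergence of tensor-product Gauss--Legendre quadrature for analytic integrands then bounds the per-entry error by $Ch^{2-2s}\rho_{1}^{-2k_{T}}$ after reinstating the physical Jacobian prefactor $\absAinsworthGlusa{K}\absAinsworthGlusa{\tilde{K}}/\absAinsworthGlusa{\hat{K}}^{2}$. Since each row of $\boldsymbol{E}_{T}$ contains only $O(1)$ nonzero entries, the aggregation step delivers the stated $E_{T}=h^{-2-2s}\rho_{1}^{-2k_{T}}$.

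The non-touching pairs are handled directly on the original representation \cref{eq:6}: mapped to the reference bi-triangle $\hat{K}\times\hat{\tilde{K}}$, the singular locus $\{\vec{x}=\vec{y}\}$ sits at complex distance $\sim d_{K,\tilde{K}}/h$ from the real domain, so tensor-product Gauss--Legendre converges with Bernstein parameter $\rho_{2}d_{K,\tilde{K}}/h$. Combined with a sup-norm bound on the integrand of order $d_{K,\tilde{K}}^{-2-2s}$ (no trial/test cancellation is exploited here), this yields a per-entry error that, after the same aggregation and taking a worst case over non-touching pairs, gives the distance-dependent rate in $E_{NT}$. The two boundary contributions \cref{eq:7,eq:9} are treated identically, with the reference edge replacing one reference triangle: the reference domain becomes $[0,1]^{3}$ in the touching case and $\hat{K}\times\hat{e}$ in the non-touching case, so one fewer integration dimension produces one fewer power of $h$, exactly matching the exponent $-1-2s$ in $E_{T,\partial}$ versus $-2-2s$ in $E_{T}$, and likewise for $E_{NT,\partial}$ versus $E_{NT}$; the extra analytic factor $\vec{n}_{e}\cdot(\vec{x}-\vec{y})$ is bounded and has no effect on the exponential rate.

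The main technical obstacle is the first step of the touching analysis: verifying that the Duffy-transformed integrand of \cref{eq:8} extends holomorphically, \emph{uniformly in the shape of $K,\tilde{K}$ and the sub-simplex index $\ell$}, to a polydisc of fixed size strictly containing $[0,1]^{4}$. This requires checking that the degenerate Jacobian $\Bar{J}^{(\ell,c)}$ retains its cancellation of $\absAinsworthGlusa{\sum_{k}\Bar{\psi}_{k}^{(\ell,c)}\vec{x}_{k}}^{-2-2s}$ in a full complex neighbourhood of the diagonal face, not merely on the real cube itself. Once this geometric fact is in place --- following the template of \cite[Ch.~5]{SauterSchwab2010_BoundaryElementMethods} and the boundary-element analysis of Erichsen--Sauter \cite{ErichsenSauter1998_EfficientAutomaticQuadrature3}, adapted to incorporate the boundary contribution \cref{eq:9} --- the remainder of the argument reduces to routine bookkeeping in combining per-pair bounds via the sparsity-plus-inverse-estimate reduction described in the first paragraph.
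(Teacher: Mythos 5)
Your route is, in substance, the paper's own: the same four-way decomposition into touching/non-touching element--element and element--edge contributions, per-entry quadrature-error bounds that are exponential in the Gauss order with Bernstein parameters $\rho_{1}$ and $\rho_{2}d_{K,\tilde{K}}/h$, and an aggregation step converting nodal coefficients into $L^{2}(\Omega)$ norms. The step you single out as the main obstacle (uniform holomorphic extension of the Duffy-transformed integrand with the Jacobian cancelling the singularity) is precisely what the paper also does not prove: it is imported from Sauter--Schwab (Theorems 5.3.23/5.3.24, restated as \Cref{thm:localError}), with the boundary variants noted to follow the same pattern, so deferring it to those references puts you at the paper's level of detail. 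Your per-entry bounds are consistent with the cited ones: $Ch^{2-2s}\rho_{1}^{-2k_{T}}$ for touching pairs, and your far-field bound of order $h^{4}d_{K,\tilde{K}}^{-2-2s}$ is dominated by the cited $h^{2}d_{K,\tilde{K}}^{-2s}$ because $d_{K,\tilde{K}}\gtrsim h$ for non-touching pairs of a conforming quasi-uniform mesh. One small correction: the $K\times e$ per-entry bound is also of order $h^{2-2s}$ (the smaller Jacobian $\sim h^{3}$ is offset by the numerator factor $\vec{n}_{e}\cdot(\vec{x}-\vec{y})\sim h$), so the exponent gap between $E_{T}$ and $E_{T,\partial}$ comes from the counts $\absAinsworthGlusa{\mathcal{P}_{h,\partial}}\sim h^{-1}$ versus $\absAinsworthGlusa{\mathcal{P}_{h}}\sim h^{-2}$ in the aggregation, not from ``one fewer integration dimension'' per entry.

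The genuine weak point is the aggregation for the far-field blocks. The accounting you describe --- entrywise supremum, times the number of contributing entries, times $\|\boldsymbol{u}\|_{\infty}\|\boldsymbol{v}\|_{\infty}\lesssim h^{-2}\|u\|_{L^{2}}\|v\|_{L^{2}}$ --- does reproduce $E_{T}$ and $E_{T,\partial}$, because those blocks carry only $O(h^{-2})$ resp.\ $O(h^{-1})$ entries; but the non-touching blocks are dense, with $\sim h^{-4}$ element pairs and $\sim h^{-3}$ element--edge pairs, so this bookkeeping yields $h^{-4}\max d_{K,\tilde{K}}^{-2s}(\cdots)$ and $h^{-3}\max d_{K,e}^{-2s}(\cdots)$, a factor $h^{-2}$ worse than the stated $E_{NT}$ and $E_{NT,\partial}$. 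The far-field part must be routed through $\ell^{2}$ (or local $L^{2}$) information on the coefficient vectors rather than $\ell^{\infty}$ times an entry count: either globally, via $\sum_{i}|u_{i}|\le\sqrt{n}\,\|\boldsymbol{u}\|_{2}\lesssim h^{-2}\|u\|_{L^{2}(\Omega)}$, so that $|\boldsymbol{u}^{\top}\boldsymbol{E}_{NT}\boldsymbol{v}|\le\max_{i,j}|E^{i,j}|\,(\sum_{i}|u_{i}|)(\sum_{j}|v_{j}|)\lesssim h^{-4}\max_{i,j}|E^{i,j}|\,\|u\|_{L^{2}}\|v\|_{L^{2}}$; or, as the paper does, per pair via $\sum_{i\in\mathcal{I}_{K\times\tilde{K}}}|u_{i}|^{2}\le Ch^{-2}\|u\|_{L^{2}(K\cup\tilde{K})}^{2}$ followed by Cauchy--Schwarz over pairs together with $\sum_{K,\tilde{K}}\|u\|_{L^{2}(K\cup\tilde{K})}^{2}=2\absAinsworthGlusa{\mathcal{P}_{h}}\|u\|_{L^{2}(\Omega)}^{2}$ and the counts $\absAinsworthGlusa{\mathcal{P}_{h}}\sim h^{-2}$, $\absAinsworthGlusa{\mathcal{P}_{h,\partial}}\sim h^{-1}$, which produces the factors $h^{-2d}$ and $h^{1-2d}$ and hence the stated $E_{NT}$, $E_{NT,\partial}$. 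You already mention the mass-matrix scaling $\|u\|_{L^{2}}^{2}\sim h^{2}\|\boldsymbol{u}\|_{2}^{2}$, so the fix is at hand, but as written the claimed prefactors $h^{-2}$ and $h^{-1}$ are not delivered by your reduction.
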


The proof of the Theorem is deferred to the appendix.

The impact of the use of quadrature rules on the accuracy of the resulting finite element approximation can be quantified using Strang's first lemma \cite[Lemma 2.27]{ErnGuermond2004_TheoryPracticeFiniteElements}:
\begin{align*}
  \normAinsworthGlusa{u-u_{h}}_{\widetilde{H}^{s}\left(\Omega\right)}
  &\leq C \inf_{v_{h}\in V_{h}}\left[\normAinsworthGlusa{u-v_{h}}_{\widetilde{H}^{s}\left(\Omega\right)} + \sup_{w_{h}\in V_{h}}\frac{\absAinsworthGlusa{a(v_{h},w_{h})-a_{Q}(v_{h},w_{h})}}{\normAinsworthGlusa{w_{h}}_{\widetilde{H}^{s}\left(\Omega\right)}}\right] \\
  &\leq C \inf_{v_{h}\in V_{h}}\left[ \normAinsworthGlusa{u-v_{h}}_{\widetilde{H}^{s}\left(\Omega\right)} \right. \\
  &\quad\left.+ \left(E_{T} + E_{NT} + E_{T,\partial} + E_{NT,\partial}\right) \normAinsworthGlusa{v_{h}}_{L^{2}(\Omega)} \sup_{w_{h}\in V_{h}}\frac{\normAinsworthGlusa{w_{h}}_{L^{2}(\Omega)}}{\normAinsworthGlusa{w_{h}}_{\widetilde{H}^{s}\left(\Omega\right)}}\right] \\
  &\leq C \inf_{v_{h}\in V_{h}}\left[ \normAinsworthGlusa{u-v_{h}}_{\widetilde{H}^{s}\left(\Omega\right)} + \left(E_{T} + E_{NT} + E_{T,\partial} + E_{NT,\partial}\right) \normAinsworthGlusa{v_{h}}_{L^{2}(\Omega)}\right],
\end{align*}
where we used the Poincare inequality \(\normAinsworthGlusa{w_{h}}_{L^{2}(\Omega)}\leq C \normAinsworthGlusa{w_{h}}_{\widetilde{H}^{s}\left(\Omega\right)}\) in the last step.
We then use the Scott-Zhang interpolation operator \(\Pi_{h}\) \cite{Ciarlet2013_AnalysisScott,ScottZhang1990_FiniteElementInterpolationNonsmooth} and the estimate
\begin{align*}
  \normAinsworthGlusa{u-\Pi_{h}u}_{\widetilde{H}^{s}\left(\Omega\right)}&\leq Ch^{\ell-s}\absAinsworthGlusa{u}_{H^{\ell}(\Omega)},
\end{align*}
used in the proof of \Cref{thm:Hsconv} to bound the first term on the right-hand side:
\begin{align*}
  \normAinsworthGlusa{u-u_{h}}_{\widetilde{H}^{s}\left(\Omega\right)}
  &\leq C \left[ h^{\ell-s}\absAinsworthGlusa{u}_{H^{\ell}\left(\Omega\right)} + \left(E_{T} + E_{NT} + E_{T,\partial} + E_{NT,\partial}\right) \normAinsworthGlusa{\Pi_{h}u}_{L^{2}(\Omega)}\right].
\end{align*}
We choose the quadrature rules in such a way that the remaining terms on the right-hand side are also of order \(\mathcal{O}\left(h^{\ell-s}\right)\), i.e.
\begin{align}
  k_{T} &\geq \frac{\left(\ell-s+2+2s\right)}{2\log(\rho_{1})}\absAinsworthGlusa{\log h}-C,\label{eq:17}\\
  k_{NT}\left(K,\tilde{K}\right) &\geq \frac{\left((\ell-s)/2+1+s\right)\absAinsworthGlusa{\log h} - s\log\frac{d_{K,\tilde{K}}}{h} - C}{\log\frac{d_{K,\tilde{K}}}{h} + \log(\rho_{2})},\label{eq:18}\\
  k_{T,\partial} &\geq \frac{\left(\ell-s+1+2s\right)}{2\log(\rho_{3})}\absAinsworthGlusa{\log h}-C,\label{eq:19}\\
  k_{NT,\partial}\left(K,e\right) &\geq \frac{\left((\ell-s)/2+1/2+s\right)\absAinsworthGlusa{\log h} - s\log\frac{d_{K,e}}{h} - C}{\log\frac{d_{K,e}}{h} + \log(\rho_{4})}.\label{eq:20}
\end{align}
In particular, if the pair \(K\times\tilde{K}\) (respectively \(K\times e\)) is well separated, so that \(d_{K,\tilde{K}}\sim 1\) (\(d_{K,e}\sim 1\)), then
\begin{align*}
  k_{NT}\left(K,\tilde{K}\right)&\geq (\ell-s)/2+1,\\
  k_{NT,\partial}\left(K,e\right)&\geq (\ell-s)/2+1/2
\end{align*}
is sufficient.

In practice, the quadrature order for non-touching element pairs can be chosen depending on \(d_{K,\tilde{K}}\) using \cref{eq:18,eq:20}, or an appropriate choice of cutoff distance \(D\) can be determined so that element pairs with \(d_{K,\tilde{K}}<D\) are approximated using a quadrature rule with \(\mathcal{O}\left(\absAinsworthGlusa{\log h}\right)\) nodes, and pairs with \(d_{K,\tilde{K}}\geq D\) are computed using a constant number of nodes.

It transpires from the expressions derived in the appendix and the fact that \(n\sim h^{-2}\) that the complexity to calculate the contributions by a single pair of elements \(K\) and \(\tilde{K}\) scales like
\begin{itemize}
\item \(\log n\) if the elements coincide,
\item \(\left(\log n\right)^{2}\) if the elements share only an edge,
\item \(\left(\log n\right)^{3}\) if the elements share only a vertex,
\item \(\left(\log n\right)^{4}\) if the elements have empty intersection, but are ``near neighbours'', and
\item \(C\) if the elements are well separated.
\end{itemize}
Since \(n\sim \absAinsworthGlusa{\mathcal{P}_{h}}\), we cannot expect a straightforward assembly of the stiffness matrix to scale better than \(\mathcal{O}\left(n^{2}\right)\).
Similarly, its memory requirement is \(n^{2}\), and a single matrix-vector product has complexity \(\mathcal{O}\left(n^{2}\right)\), which severely limits the size of problems that can be considered.

\section{Solving the Linear Systems}
\label{sec:solut-syst-involv}

The fractional Poisson equation leads to the linear algebraic system
\begin{align}
  \boldsymbol{A}^{s}\vec{u}=\vec{b},\label{eq:4}
\end{align}
whereas time-dependent problems (using implicit integration schemes) lead to systems of the form
\begin{align}
    \left(\boldsymbol{M}+\Delta t\boldsymbol{A}^{s}\right)\vec{u}=\vec{b},\label{eq:3}
\end{align}
where \(\Delta t\) is the time-step size.
In typical examples, the time-step will be chosen so that the orders of convergence in both spatial and temporal discretisation errors are balanced.

In both cases, the matrices are dense and the condition number of \(\boldsymbol{A}^{s}\) grows as the mesh is refined (\(h\rightarrow0\)).
The cost of using a direct solver is prohibitively expensive, growing as \(\mathcal{O}\left(n^{3}\right)\).
An alternative is to use an iterative solver such as the conjugate gradient method but the rate of convergence will depend on the condition number.
The following result quantifies how the condition number of \(\boldsymbol{A}^{s}\) depends on the fractional order \(s\) and the mesh size \(h\):
\begin{theorem}[\cite{AinsworthMcleanEtAl1999_ConditioningBoundaryElementEquations}]\label{thm:condFracLapl}
  For \(s<d/2\), and a family of shape regular and globally quasi-uniform triangulations \(\mathcal{P}_{h}\) with maximal element size \(h\), the spectrum of the stiffness matrix satisfies
  \begin{align*}
    ch^{d}\boldsymbol{I} \leq \boldsymbol{A}^{s} \leq Ch^{d-2s}\boldsymbol{I},
  \end{align*}
  and hence the condition number of the stiffness matrix satisfies
  \begin{align*}
    \kappa\left(\boldsymbol{A}^{s}\right) &= C h^{-2s}.
  \end{align*}
\end{theorem}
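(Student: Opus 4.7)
The strategy is to sandwich the Rayleigh quotient $\vec{v}^{T}\boldsymbol{A}^{s}\vec{v}/\vec{v}^{T}\vec{v}$ between the $\widetilde{H}^{s}$ energy and the $L^{2}$ mass, and then use equivalence of discrete $\ell^{2}$ and $L^{2}$ norms on a quasi-uniform mesh. Concretely, for $v_{h}=\sum_{i}v_{i}\phi_{i}\in V_{h}$ the definitions give
\begin{align*}
  \vec{v}^{T}\boldsymbol{A}^{s}\vec{v}=a(v_{h},v_{h})=\normAinsworthGlusa{v_{h}}_{\widetilde{H}^{s}(\Omega)}^{2},
\end{align*}
so the spectral bounds on $\boldsymbol{A}^{s}$ reduce to controlling $\normAinsworthGlusa{v_{h}}_{\widetilde{H}^{s}(\Omega)}$ both from below and above in terms of $\normAinsworthGlusa{\vec{v}}_{\ell^{2}}$.

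First I would record the standard mass-matrix equivalence for nodal piecewise linear bases on a shape-regular, globally quasi-uniform triangulation, namely $ch^{d}\normAinsworthGlusa{\vec{v}}_{\ell^{2}}^{2}\leq\normAinsworthGlusa{v_{h}}_{L^{2}(\Omega)}^{2}\leq Ch^{d}\normAinsworthGlusa{\vec{v}}_{\ell^{2}}^{2}$, proved by scaling from a reference element. The lower bound on the spectrum is then obtained by combining this with the fractional Poincar\'e--Friedrichs inequality $\normAinsworthGlusa{v_{h}}_{L^{2}(\Omega)}\leq C\normAinsworthGlusa{v_{h}}_{\widetilde{H}^{s}(\Omega)}$, which holds because $v_{h}$ has compact support in the bounded domain $\Omega$ and the semi-norm in the definition of $\normAinsworthGlusa{\cdot}_{\widetilde{H}^{s}(\Omega)}$ already captures the full energy. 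Chaining these gives
\begin{align*}
  \vec{v}^{T}\boldsymbol{A}^{s}\vec{v}=\normAinsworthGlusa{v_{h}}_{\widetilde{H}^{s}(\Omega)}^{2}\geq c\normAinsworthGlusa{v_{h}}_{L^{2}(\Omega)}^{2}\geq ch^{d}\normAinsworthGlusa{\vec{v}}_{\ell^{2}}^{2},
\end{align*}
which is the required lower bound $\boldsymbol{A}^{s}\geq ch^{d}\boldsymbol{I}$.

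For the upper bound the essential ingredient is the fractional inverse inequality
\begin{align*}
  \normAinsworthGlusa{v_{h}}_{\widetilde{H}^{s}(\Omega)}\leq Ch^{-s}\normAinsworthGlusa{v_{h}}_{L^{2}(\Omega)},\qquad v_{h}\in V_{h}.
\end{align*}
The classical $L^{2}\to H^{1}$ inverse inequality on quasi-uniform meshes gives the endpoint $t=1$, the case $t=0$ is trivial, and the intermediate cases $t=s\in(0,1)$ follow by Hilbert-space interpolation (equivalently, the $K$-method applied between $L^{2}(\Omega)$ and $\widetilde{H}^{1}(\Omega)$, noting that $[L^{2},\widetilde{H}^{1}]_{s}=\widetilde{H}^{s}$ with equivalent norms, which is where the hypothesis $s<d/2$ enters via $s\neq 1/2$ being the only problematic interpolation index and here being subsumed by $V_{h}\subset\widetilde{H}^{s}$). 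Combined with the mass-matrix bound this yields
\begin{align*}
  \vec{v}^{T}\boldsymbol{A}^{s}\vec{v}=\normAinsworthGlusa{v_{h}}_{\widetilde{H}^{s}(\Omega)}^{2}\leq Ch^{-2s}\normAinsworthGlusa{v_{h}}_{L^{2}(\Omega)}^{2}\leq Ch^{d-2s}\normAinsworthGlusa{\vec{v}}_{\ell^{2}}^{2},
\end{align*}
i.e.\ $\boldsymbol{A}^{s}\leq Ch^{d-2s}\boldsymbol{I}$. The stated condition number estimate $\kappa(\boldsymbol{A}^{s})=Ch^{-2s}$ then follows by taking the ratio of extreme eigenvalues.

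The main obstacle is the fractional inverse inequality: the $\widetilde{H}^{s}$ semi-norm is a nonlocal double integral over $\mathbb{R}^{d}$, so one cannot simply scale element-by-element as in the integer-order case. The cleanest route is the interpolation argument sketched above, which reduces the nonlocal estimate to standard local ones at the integer endpoints; the identification of $\widetilde{H}^{s}(\Omega)$ as a real interpolation space between $L^{2}(\Omega)$ and $\widetilde{H}^{1}(\Omega)$ is exactly the place where the restriction $s<d/2$ is used, and this is the step that should be invoked by citing \cite{AinsworthMcleanEtAl1999_ConditioningBoundaryElementEquations} (or \cite{Mclean2000_StronglyEllipticSystemsBoundaryIntegralEquations} for the interpolation facts).
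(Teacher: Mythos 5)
The paper does not prove this theorem at all: it is quoted verbatim from the cited reference \cite{AinsworthMcleanEtAl1999_ConditioningBoundaryElementEquations}, so there is no internal proof to compare with. Your reconstruction follows the standard route of that reference (Rayleigh quotient, \(\ell^{2}\)--\(L^{2}\) mass-matrix equivalence, Poincar\'e inequality for the lower spectral bound, fractional inverse inequality for the upper one), and the lower bound and the overall architecture are fine.

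There is, however, a genuine soft spot in the one step you yourself identify as the main obstacle. For \(s<1/2\) the paper's discrete space is \(V_{h}=X_{h}\), which contains the boundary nodes, so \(V_{h}\not\subset\widetilde{H}^{1}(\Omega)=H^{1}_{0}(\Omega)\): the zero extension of such a \(v_{h}\) jumps across \(\partial\Omega\) and is not in \(H^{1}(\mathbb{R}^{d})\). Consequently the operator-interpolation argument with the couple \(\bigl(L^{2}(\Omega),\widetilde{H}^{1}(\Omega)\bigr)\) cannot be run as stated, because the endpoint bound \(\normAinsworthGlusa{v_{h}}_{\widetilde{H}^{1}}\leq Ch^{-1}\normAinsworthGlusa{v_{h}}_{L^{2}}\) is not even meaningful on \(V_{h}\). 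The repair is standard but should be said: for \(s<1/2\) interpolate between \(L^{2}(\Omega)\) and \(H^{1}(\Omega)\) to get \(\normAinsworthGlusa{v_{h}}_{H^{s}(\Omega)}\leq Ch^{-s}\normAinsworthGlusa{v_{h}}_{L^{2}}\) and then invoke the norm equivalence \(\widetilde{H}^{s}(\Omega)\simeq H^{s}(\Omega)\), valid precisely for \(s<1/2\) (with an \(s\)-dependent constant); for \(s\geq1/2\) the space \(V_{h}\) does vanish on \(\partial\Omega\), \(V_{h}\subset H^{1}_{0}(\Omega)\), and your couple \(\bigl(L^{2},H^{1}_{0}\bigr)\) works, including at \(s=1/2\) since \([L^{2},H^{1}_{0}]_{1/2}=\widetilde{H}^{1/2}(\Omega)\). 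Relatedly, your parenthetical about where \(s<d/2\) enters is off: in the paper's setting \(d=2\), so \(s<d/2\) is automatic and plays no role in the interpolation index \(s=1/2\); the hypothesis is simply inherited from the general statement in the cited reference. Finally, note that the two displayed spectral bounds only give \(\kappa(\boldsymbol{A}^{s})\leq Ch^{-2s}\); to read the conclusion as a genuine equivalence one must also check sharpness, e.g.\ \(a(\phi_{i},\phi_{i})\sim h^{d-2s}\) for a single nodal basis function and \(a(v_{h},v_{h})/\normAinsworthGlusa{v_{h}}_{L^{2}}^{2}\leq C\) for the interpolant of a fixed smooth bump. The theorem's wording has the same looseness, but since you are supplying a proof you should close that half as well.
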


The exponent of the growth of the condition number depends on the fractional order \(s\).
For small \(s\), the matrix is better conditioned, similarly to the mass matrix in the case of integer order operators.
As \(s\rightarrow 1\), the growth of the condition number approaches \(\mathcal{O}\left(h^{-2}\right)\), as for the usual Laplacian.
Consequently, just as the conjugate gradient method fails to be efficient for the solution of equations arising from the discretisation of the Laplacian, CG becomes increasingly uncompetitive for the solution of equations arising from the fractional Laplacian.

In the integer order case, multigrid iterations have been used with great success for solving systems involving both the mass matrix and the stiffness matrix that arises from the discretisation of the regular Laplacian.
It is therefore to be expected that the same will remain true for systems arising from the fractional Laplacian.
In practice, a single multigrid iteration is much more expensive than a single iteration of conjugate gradient.
The advantage of multigrid is, however, that the number of iterations is essentially independent of the number of unknowns \(n\).
Consequently, while the performance of CG degenerates as \(n\) increases, this will not be the case with multigrid making it attractive as a solver for the fractional Poisson problem.

Turning to the systems that arise from the discretisation of time-dependent problems, we first observe that an explicit scheme will lead to CFL conditions on the time-step size of the form \(\Delta t \leq C h^{2s}\).
On the other hand, for implicit time-stepping, the following theorem shows that if the time-step \(\Delta t=\mathcal{O}\left(h^{2s}\right)\), we can expect the conjugate gradient method to converge rapidly, at a rate which does not degenerate as \(n\) increases, in contrast with what is observed for steady problems:
\begin{lemma}\label{lem:condFracLaplTime}
  For a shape regular and globally quasi-uniform family of triangulations \(\mathcal{P}_{h}\) and time-step \(\Delta t \leq 1\),
  \begin{align*}
    \kappa\left(\boldsymbol{M}+\Delta t\boldsymbol{A}^{s}\right)
    &\leq C \left(1+\frac{\Delta t}{h^{2s}}\right).
  \end{align*}
\end{lemma}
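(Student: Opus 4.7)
The plan is to combine the spectral bounds for the mass matrix $\boldsymbol{M}$ with those for $\boldsymbol{A}^{s}$ already supplied by \Cref{thm:condFracLapl}. Both matrices are symmetric positive definite on $V_{h}$, so upper and lower bounds on the sum $\boldsymbol{M}+\Delta t\,\boldsymbol{A}^{s}$ can be obtained additively.

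First I would recall the standard spectral estimate for the mass matrix on a shape regular, globally quasi-uniform family of triangulations in $\mathbb{R}^{d}$, namely
\begin{align*}
  c h^{d} \boldsymbol{I} \leq \boldsymbol{M} \leq C h^{d} \boldsymbol{I},
\end{align*}
which is a classical consequence of scaling to the reference element, norm equivalence on the finite dimensional space of local shape functions, and quasi-uniformity of $\mathcal{P}_{h}$. Combining this with \Cref{thm:condFracLapl} (applicable since $d=2$ and $s\in(0,1)$ so $s<d/2$) gives
\begin{align*}
  \boldsymbol{M}+\Delta t\,\boldsymbol{A}^{s}
  \leq \left(Ch^{d}+C\Delta t\, h^{d-2s}\right)\boldsymbol{I}
   = Ch^{d}\left(1+\frac{\Delta t}{h^{2s}}\right)\boldsymbol{I},
\end{align*}
while positive semidefiniteness of $\Delta t\,\boldsymbol{A}^{s}$ yields
\begin{align*}
  \boldsymbol{M}+\Delta t\,\boldsymbol{A}^{s}\geq \boldsymbol{M}\geq ch^{d}\boldsymbol{I}.
\end{align*}
Taking the ratio of the resulting extreme eigenvalues delivers the asserted bound on $\kappa\left(\boldsymbol{M}+\Delta t\,\boldsymbol{A}^{s}\right)$.

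There is no genuine obstacle; the argument is essentially a one-line combination of two known estimates. The role of the hypothesis $\Delta t\leq 1$ is merely to identify the regime in which the stated bound is sharpest: for $\Delta t\gg 1$ one would instead use the alternative lower bound $\Delta t\,\boldsymbol{A}^{s}\geq c\Delta t\, h^{d}\boldsymbol{I}$ from \Cref{thm:condFracLapl}, which produces a condition number of order $h^{-2s}$ independently of $\Delta t$, and the two regimes match at $\Delta t\sim 1$.
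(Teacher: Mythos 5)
Your proposal is correct and follows essentially the same route as the paper: both combine the standard mass-matrix spectral bounds \(ch^{d}\boldsymbol{I}\leq\boldsymbol{M}\leq Ch^{d}\boldsymbol{I}\) with the stiffness-matrix bounds of \Cref{thm:condFracLapl} and read off the extreme eigenvalues of the sum. The only cosmetic difference is in the lower bound, where you discard \(\Delta t\,\boldsymbol{A}^{s}\) by positive semidefiniteness while the paper retains its lower bound \(c\Delta t\,h^{d}\boldsymbol{I}\); both yield the stated estimate.
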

\begin{proof}~
  Since \(ch^{d}\boldsymbol{I}\leq \boldsymbol{M} \leq Ch^{d}\boldsymbol{I}\), this also permits us to deduce that
  \begin{align*}
    c\left(h^{d} +  \Delta t~ h^{d}\right)\boldsymbol{I} \leq \boldsymbol{M}+\Delta t \boldsymbol{A}^{s} \leq C\left(h^{d} + \Delta t~ h^{d-2s}\right)\boldsymbol{I}
  \end{align*}
  and so
  \begin{align*}
    \kappa\left(\boldsymbol{M}+\Delta t\boldsymbol{A}^{s}\right)
    &\leq  C  \left(1+\frac{\Delta t}{h^{2s}}\right).
  \end{align*}
\end{proof}
This shows that for a general time-step \(\Delta t\geq h^{2s}\), the number of iterations the conjugate gradient method will require for systems of the form \cref{eq:3} will grow as \(\sqrt{\Delta t/h^{2s}} \sim n^{s/d}\sqrt{\Delta t} \).
Consequently, if \(\Delta t\) is large compared to \(h^{2s}\), a multigrid solver outperforms conjugate gradient for the systems \cref{eq:3}, but if \(\Delta t\) is on the same order as \(h^{2s}\), conjugate gradient iterations will generally be more efficient than a multigrid method.

In this section we have concerned ourselves with the effect that the mesh and the fractional order have on the rate of convergence of iterative solvers.
This, of course, ignores the cost of carrying out the iteration in which a matrix-vector multiply must be computed at each step.
The complexity of both multigrid and conjugate gradient iterations depends on how efficiently the matrix-vector product \(\boldsymbol{A}^{s}\vec{x}\) can be computed.
By way of contrast, the mass matrix in \cref{eq:3} has \(\mathcal{O}\left(n\right)\) entries, so its matrix-vector product scales linearly in the number of unknowns.
Since all the basis functions \(\phi_{i}\) interact with one another, the matrix \(\boldsymbol{A}^{s}\) is dense and the associated matrix-vector product has complexity \(\mathcal{O}\left(n^{2}\right)\).
In the following section, we discuss a sparse approximation that will preserve the order of the approximation error of the fractional Laplacian, but display significantly better scaling in terms of both memory usage and operation counts for both assembly and matrix-vector product.

\section{Sparse Approximation of the Matrix}
\label{sec:cluster-method}

The presence of a factor \(\absAinsworthGlusa{\vec{x}-\vec{y}}^{-d-2s}\) in the integrand in the expression for the entries of the stiffness matrix means that the contribution of pairs of elements that are well separated is significantly smaller than the contribution arising from pairs of elements that are close to one another.
This suggests the use of the \emph{panel clustering method} \cite{HackbuschNowak1989_FastMatrixMultiplicationBoundary} from the boundary element literature, whereby such far field contributions are replaced by less expensive low-rank blocks rather than computing and storing all the individual entries from the original matrix.
Conversely, the near-field contributions are more significant but involve only local couplings and hence the cost of storing the individual entries is a practical proposition.
A full discussion of the panel clustering method is beyond the scope of the present work but can be found in \cite[Chapter 7]{SauterSchwab2010_BoundaryElementMethods}.
Here, we confine ourselves to stating only the necessary definitions and steps needed to describe our approach.

\begin{definition}[\cite{SauterSchwab2010_BoundaryElementMethods}]
  A \emph{cluster} is a union of one or more indices from the set of degrees of freedom \(\mathcal{I}\).
  The nodes of a hierarchical cluster tree \(\mathcal{T}\) are clusters.
  The set of all nodes is denoted by \(T\) and satisfies
  \begin{enumerate}
  \item \(\mathcal{I}\) is a node of \(\mathcal{T}\).
  \item The set of leaves \(\text{Leaves}(\mathcal{T})\subset T\) corresponds to the degrees of freedom \(i\in\mathcal{I}\) and is given by
    \begin{align*}
      \text{Leaves}(\mathcal{T}) := \left\{\left\{i\right\} : i\in\mathcal{I}\right\}.
    \end{align*}
  \item For every \(\sigma\in T\setminus \text{Leaves}\left(\mathcal{T}\right)\) there exists a minimal set \(\Sigma\left(\sigma\right)\) of nodes in \(T\setminus\left\{\sigma\right\}\) (i.e. of minimal cardinality) that satisfies
    \begin{align*}
      \sigma = \bigcup_{\tau\in\Sigma\left(\sigma\right)}\tau.
    \end{align*}
    The set \(\Sigma\left(\sigma\right)\) is called the sons of \(\sigma\).
    The edges of the cluster tree \(\mathcal{T}\) are the pairs of nodes \(\left(\sigma,\tau\right)\in T\times T\) such that \(\tau\in \Sigma\left(\sigma\right)\).
  \end{enumerate}
\end{definition}
An example of a cluster tree for a one-dimensional problem is given in \Cref{fig:clusterTree}.

\begin{figure}
  \centering
  \includegraphics{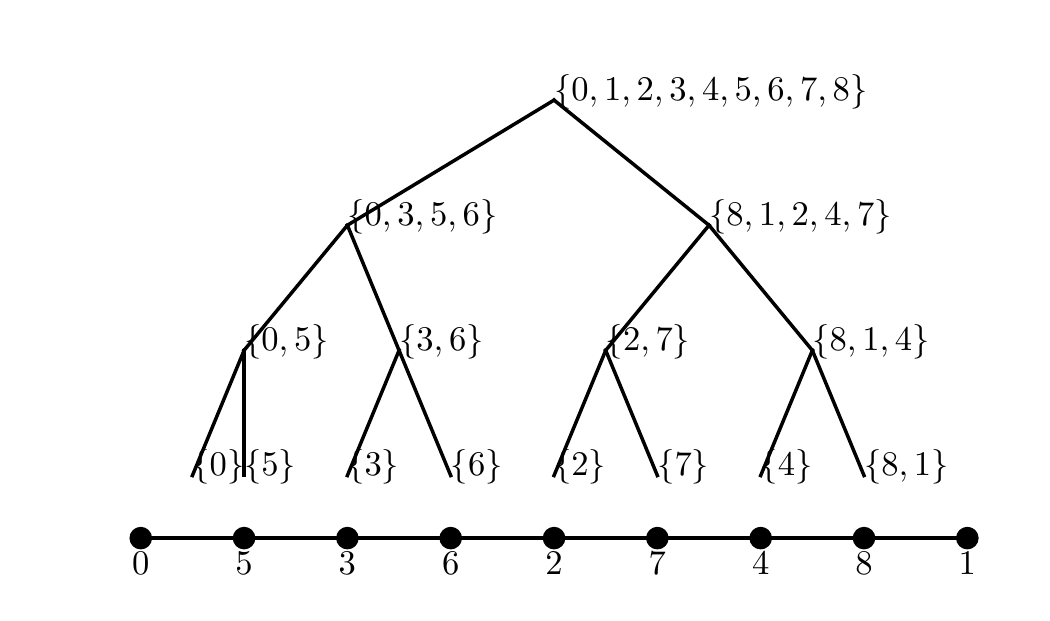}
  \caption{
    Cluster tree for a one dimensional problem.
    For each cluster, the associated degrees of freedom are shown.
    The mesh with its nodal degrees of freedom is plotted at the bottom.
  }
  \label{fig:clusterTree}
\end{figure}

\begin{definition}[\cite{SauterSchwab2010_BoundaryElementMethods}]
  The \emph{cluster box} \(Q_{\sigma}\) of a cluster \(\sigma\in T\) is the minimal hyper-cube which contains \(\bigcup_{i\in\sigma}\operatorname{supp} \phi_{i}\).
  The \emph{diameter} of a cluster is the diameter of its cluster box \(\operatorname{diam}\left(\sigma\right):= \sup_{\vec{x},\vec{y}\in Q_{\sigma}} \absAinsworthGlusa{\vec{x}-\vec{y}}\).
  The \emph{distance} of two clusters \(\sigma\) and \(\tau\) is \(\operatorname{dist}\left(\sigma,\tau\right):=\inf_{\vec{x}\in Q_{\sigma}, \vec{y}\in Q_{\tau}}\absAinsworthGlusa{\vec{x}-\vec{y}}\).
  The subspace \(V_{\sigma}\) of \(V_{h}\) is defined as \(V_{\sigma}:=\operatorname{span}\left\{\phi_{i}\mid i\in\sigma\right\}\).
\end{definition}

For given \(\eta>0\), a pair of clusters \(\left(\sigma,\tau\right)\) is called \emph{admissible}, if
\begin{align*}
  \eta\operatorname{dist}\left(\sigma,\tau\right)& \geq \max\left\{\operatorname{diam}\left(\sigma\right), \operatorname{diam}\left(\tau\right)\right\}.
\end{align*}
The admissible cluster pairs can be determined recursively.
Cluster pairs that are not admissible and have no admissible sons are part of the near field and are assembled into a sparse matrix.
The admissible cluster pairs for a one dimensional problem are shown in \Cref{fig:clusterPairs}.

\begin{figure}
  \centering
  \includegraphics{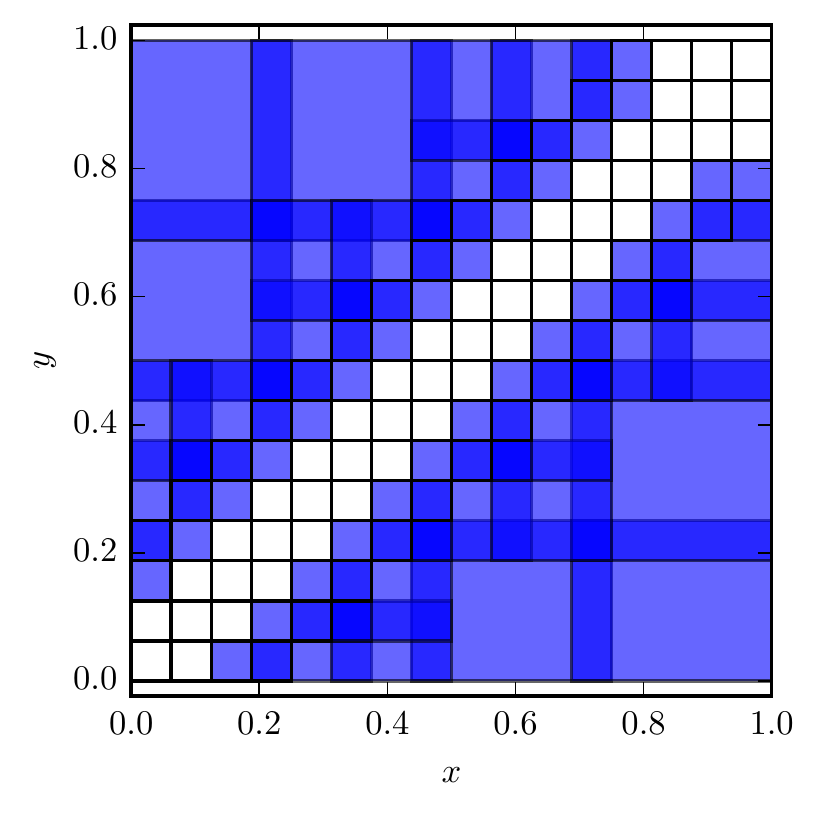}
  \caption{
    Cluster pairs for a one dimensional problem.
    The cluster boxes of the admissible cluster pairs are coloured in light blue, and their overlap in darker blue.
    The diagonal cluster pairs are not admissible and are not approximated, but assembled in full.
  }
  \label{fig:clusterPairs}
\end{figure}

For admissible pairs of clusters \(\sigma\) and \(\tau\) and any degrees of freedom \(i\in\sigma\) and \(j\in\tau\), the corresponding entry of the stiffness matrix is
\begin{align*}
  \left(\boldsymbol{A}^{s}\right)_{ij}
  &= a\left(\phi_{i},\phi_{j}\right) =-C\left(d,s\right)\int_{\Omega} \int_{\Omega} k\left(\vec{x},\vec{y}\right)\phi_{i}\left(\vec{x}\right) \phi_{j}\left(\vec{y}\right)
\end{align*}
with kernel \(k\left(\vec{x},\vec{y}\right)=\absAinsworthGlusa{\vec{x}-\vec{y}}^{-(d+2s)}\).
The kernel can be approximated on \(Q_{\sigma}\times Q_{\tau}\) using Chebyshev interpolation of order \(m\) in every spatial dimension by
\begin{align*}
  k_{m}\left(\vec{x},\vec{y}\right)&= \sum_{\alpha,\beta=1}^{m^{d}} k\left(\vec{\xi}_{\alpha}^{\sigma},\vec{\xi}_{\beta}^{\tau}\right) L_{\alpha}^{\sigma}\left(\vec{x}\right) L_{\beta}^{\tau}\left(\vec{y}\right).
\end{align*}
Here, \(\vec{\xi}_{\alpha}^{\sigma}\) are the tensor Chebyshev nodes on \(Q_{\sigma}\), and \(L_{\alpha}^{\sigma}\) are the associated Lagrange polynomials on the cluster box \(Q_{\sigma}\) with \(L_{\alpha}^{\sigma}\left(\vec{\xi}_{\beta}^{\sigma}\right)=\delta_{\alpha\beta}\).
This leads to the following approximation:
\begin{align*}
  \left(\boldsymbol{A}^{s}\right)_{ij}
  &\approx  -C\left(d,s\right) \sum_{\alpha,\beta=1}^{m^{2}} k\left(\vec{\xi}_{\alpha}^{\sigma},\vec{\xi}_{\beta}^{\tau}\right) \int_{\operatorname{supp} \phi_{i}} \phi_{i}\left(\vec{x}\right)L_{\alpha}^{\sigma}\left(\vec{x}\right) \; d \vec{x} \int_{\operatorname{supp} \phi_{j}} \phi_{j}\left(\vec{y}\right) L_{\beta}^{\tau}\left(\vec{y}\right) \; d\vec{y}
\end{align*}
In fact, the expressions \(\int_{\operatorname{supp} \phi_{i}} \phi_{i}\left(\vec{x}\right)L_{\alpha}^{\sigma}\left(\vec{x}\right) \; d \vec{x}\) can be computed recursively starting from the finest level of the cluster tree, since for \(\tau\in\Sigma\left(\sigma\right)\) and \(\vec{x}\in Q_{\tau}\)
\begin{align*}
  L_{\alpha}^{\sigma}\left(\vec{x}\right)&=\sum_{\beta}L_{\alpha}^{\sigma}\left(\vec{\xi}_{\beta}^{\tau}\right) L_{\beta}^{\tau}\left(\vec{x}\right).
\end{align*}
This means that for all leaves \(\sigma=\left\{i\right\}\), and all \(1\leq\alpha\leq m^{d}\), the \emph{basis far-field coefficients}
\begin{align*}
  \int_{\operatorname{supp} \phi_{i}} \phi_{i}\left(\vec{x}\right)L_{\alpha}^{\sigma}\left(\vec{x}\right) \; d \vec{x}
\end{align*}
need to be evaluated (e.g. by \(m+1\)-th order Gaussian quadrature).
Moreover, the \emph{shift coefficients}
\begin{align*}
  L_{\alpha}^{\sigma}\left(\vec{\xi}_{\beta}^{\tau}\right)
\end{align*}
for \(\tau\in\Sigma\left(\sigma\right)\) must be evaluated, as well as the kernel approximations
\begin{align*}
  k\left(\vec{\xi}_{\alpha}^{\sigma},\vec{\xi}_{\beta}^{\tau}\right)
\end{align*}
for every admissible pair of clusters \(\left(\sigma,\tau\right)\).
We refer the reader to \cite{SauterSchwab2010_BoundaryElementMethods} for further details.

The consistency error of this approximation is given by the following theorem:
\begin{theorem}[\cite{SauterSchwab2010_BoundaryElementMethods}, Theorems 7.3.12 and 7.3.18]
  There exists \(\gamma\in(0,1)\) such that
  \begin{align*}
    \absAinsworthGlusa{k\left(\vec{x},\vec{y}\right))-k_{m}\left(\vec{x},\vec{y}\right)} &\leq \frac{C\gamma^{m}}{\operatorname{dist}\left(\sigma,\tau\right)^{d+2s}}.
  \end{align*}
  The consistency error between the bilinear form \(a(\cdot,\cdot)\) and the bilinear form \(a_{C}(\cdot,\cdot)\) of the panel clustering method is
  \begin{align*}
    \absAinsworthGlusa{a(u,v)-a_{C}(u,v)}
    &\leq C\gamma^{m}\left(1+2\eta\right)^{d+2s} C_{d,s}(h)\normAinsworthGlusa{u}_{L^{2}(\Omega)} \normAinsworthGlusa{v}_{L^{2}(\Omega)},
  \end{align*}
  where
  \begin{align*}
    C_{d,s}(h)
    &=
      \begin{cases}
        h^{-2} & \text{if } d=1 \text{ and }s< 1/2, \\
        h^{-2}\left(1+\absAinsworthGlusa{\log h}\right) & \text{if } d=1 \text{ and }s = 1/2, \\
        h^{-d-2s} & \text{otherwise}.
      \end{cases}
  \end{align*}
\end{theorem}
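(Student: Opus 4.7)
The plan is to prove the two inequalities in sequence: first the pointwise kernel bound, then the bilinear form estimate that results from summing the kernel bound over the block cluster tree.

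For the first inequality, the key observation is that the kernel $k(\vec{x},\vec{y}) = \absAinsworthGlusa{\vec{x}-\vec{y}}^{-(d+2s)}$ is real-analytic on the set where $\vec{x}\ne \vec{y}$. On an admissible pair $(\sigma,\tau)$, the admissibility condition forces $\operatorname{dist}(Q_\sigma,Q_\tau)\ge \eta^{-1}\max(\operatorname{diam}\sigma,\operatorname{diam}\tau)$, so each coordinate of $\vec{x}$ (and of $\vec{y}$) can be analytically continued into a Bernstein ellipse whose radii are bounded below by a fixed multiple of $\operatorname{dist}(\sigma,\tau)$ before the diagonal singularity is reached. Consequently the standard one-dimensional Chebyshev interpolation estimate (Bernstein's theorem), applied coordinate by coordinate in a tensor-product fashion, yields a bound of the form $\gamma^{m}\,\|k\|_{L^\infty(E)}$ where $\gamma=\gamma(\eta)\in(0,1)$ and $E$ is the product of Bernstein ellipses. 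The sup-norm of $k$ on $E$ is $\le C\operatorname{dist}(\sigma,\tau)^{-(d+2s)}$, which finishes the pointwise bound.

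For the bilinear form estimate, decompose the difference $a(u,v)-a_C(u,v)$ into contributions from the admissible cluster pairs (non-admissible pairs are handled exactly). On a single admissible block the pointwise bound together with Cauchy--Schwarz yields
\begin{align*}
  \absAinsworthGlusa{\textstyle\int_{Q_\sigma}\!\int_{Q_\tau}(k-k_m)(\vec{x},\vec{y})u(\vec{x})v(\vec{y})\,d\vec{x}\,d\vec{y}}
  \le \frac{C\gamma^m}{\operatorname{dist}(\sigma,\tau)^{d+2s}}\absAinsworthGlusa{Q_\sigma}^{1/2}\absAinsworthGlusa{Q_\tau}^{1/2}\normAinsworthGlusa{u}_{L^2(Q_\sigma)}\normAinsworthGlusa{v}_{L^2(Q_\tau)}.
\end{align*}
The factor $(1+2\eta)^{d+2s}$ comes from absorbing the ratio between $\absAinsworthGlusa{\vec{x}-\vec{y}}$ on the supports of the basis functions and $\operatorname{dist}(\sigma,\tau)$, which by admissibility is at most $1+2\eta$.

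Next, organise the sum by the levels of the cluster tree. At level $\ell$ one has cluster boxes of side $\sim 2^{-\ell}\operatorname{diam}(\Omega)$ down to side $\sim h$ at the finest level. The key combinatorial fact, which is the structural heart of the panel clustering method, is that the block cluster tree is \emph{sparse}: for each cluster $\sigma$ at a given level the number of clusters $\tau$ such that $(\sigma,\tau)$ is admissible but $(\sigma,\operatorname{parent}(\tau))$ or $(\operatorname{parent}(\sigma),\tau)$ is not, is bounded by a constant $C_{\eta,d}$. Applying Cauchy--Schwarz to the double sum and using the disjointness of the supports at each level gives, after straightforward manipulation,
\begin{align*}
  \absAinsworthGlusa{a(u,v)-a_C(u,v)} \le C\gamma^m(1+2\eta)^{d+2s}\Bigl(\sum_{\ell=0}^{L} N_\ell\Bigr)^{1/2}\normAinsworthGlusa{u}_{L^2(\Omega)}\normAinsworthGlusa{v}_{L^2(\Omega)},
\end{align*}
where $L\sim\log_2(1/h)$ and $N_\ell$ is the contribution from level $\ell$, which after balancing volumes and distances behaves like $2^{-\ell(d-2(d+2s))}=2^{\ell(d+4s)}$. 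Summing this geometric-type series produces the three cases for $C_{d,s}(h)$: the finest-level contribution $h^{-d-2s}$ dominates in all regimes where $d+4s>d$, whereas for $d=1$ and $s<1/2$ the $h^{-2}$ term from a different part of the estimate (arising from the $L^2$ mass of $u,v$ and the unbounded growth of the kernel on small scales that cannot be offset) takes over, with a logarithmic crossover at $s=1/2$.

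The main obstacle is the bookkeeping in the last step: verifying the sparsity constant and carefully tracking how the $L^2$ mass of $u$ and $v$ is distributed across cluster-tree levels so that the Cauchy--Schwarz step really delivers $\normAinsworthGlusa{u}_{L^2(\Omega)}\normAinsworthGlusa{v}_{L^2(\Omega)}$ and not a larger norm. The analytic part (Chebyshev estimate on each admissible block) is comparatively routine once the admissibility condition is correctly exploited to locate the Bernstein ellipse; the case distinction in $C_{d,s}(h)$ falls out of the geometric series at the end and requires no further analytic input.
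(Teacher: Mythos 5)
A preliminary remark: the paper does not prove this statement at all --- it is imported verbatim from Sauter and Schwab (Theorems 7.3.12 and 7.3.18) --- so there is no internal proof to compare against and your argument has to stand on its own. Your overall route (tensor-product Chebyshev interpolation error on each admissible block, with admissibility controlling the region of analyticity, then blockwise Cauchy--Schwarz and summation over the block cluster tree) is the standard one, and the first inequality together with the provenance of the factor \((1+2\eta)^{d+2s}\) is fine at the level of a sketch, modulo the caveat that \(\absAinsworthGlusa{\vec{x}-\vec{y}}^{-(d+2s)}\) involves a non-integer power, so the complex-continuation step is usually replaced by asymptotic-smoothness derivative bounds, as in the cited reference.

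The genuine gap is in the final summation, which is exactly where the content of the second estimate --- the explicit \(C_{d,s}(h)\) with its three cases --- resides. With your own blockwise bound, an admissible pair at level \(\ell\) has \(\operatorname{dist}(\sigma,\tau)\sim 2^{-\ell}\) and \(\absAinsworthGlusa{Q_\sigma}\sim\absAinsworthGlusa{Q_\tau}\sim 2^{-\ell d}\), so each block carries a factor \(\operatorname{dist}(\sigma,\tau)^{-(d+2s)}\absAinsworthGlusa{Q_\sigma}^{1/2}\absAinsworthGlusa{Q_\tau}^{1/2}\sim 2^{2s\ell}\); the sparsity of the block cluster tree and Cauchy--Schwarz then give a level-\(\ell\) contribution of order \(\gamma^{m}2^{2s\ell}\normAinsworthGlusa{u}_{L^{2}(\Omega)}\normAinsworthGlusa{v}_{L^{2}(\Omega)}\), and summing to the finest level \(L\sim\log_2(1/h)\) yields \(\gamma^{m}h^{-2s}\) (equivalently, a Schur-test bound for the kernel cut off at distance of order \(h\)). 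That is not what you computed: your claimed \(N_\ell\sim 2^{\ell(d+4s)}\) inside a square root gives \(h^{-(d+4s)/2}\), which matches neither the correct \(h^{-2s}\) nor the stated \(h^{-d-2s}\), and your series produces none of the three cases --- no logarithm at \(d=1\), \(s=1/2\), and no \(h^{-2}\) at \(d=1\), \(s<1/2\). Moreover your explanation of the \(d=1\) cases is backwards: for \(s<1/2\) one has \(h^{-2}\geq h^{-1-2s}\), so \(h^{-2}\) is not a competing term that takes over; it is simply the cruder bound that the cited theorem states in that regime (in Sauter--Schwab the case split records how the far-field sum of \(\operatorname{dist}^{-(d+2s)}\) weighted by block measures converges, diverges logarithmically, or diverges algebraically; compare also the element-counting argument in Appendix B of this paper). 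To close the gap you should either carry out that far-field summation honestly and exhibit the three regimes, or prove the sharper uniform bound \(C\gamma^{m}(1+2\eta)^{d+2s}h^{-2s}\normAinsworthGlusa{u}_{L^{2}(\Omega)}\normAinsworthGlusa{v}_{L^{2}(\Omega)}\) cleanly and note that \(h^{-2s}\leq C_{d,s}(h)\) in all three cases, so it implies the stated estimate; as written, the case analysis is asserted rather than derived and the exponent bookkeeping is internally inconsistent.
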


Again, by invoking Strang's Lemma, \(\mathcal{O}\left(h^{\ell-s}\right)\) convergence is retained if the interpolation order \(m\) satisfies
\begin{align*}
  m&\geq \frac{\left(\ell-s+2\right)\absAinsworthGlusa{\log h}}{\absAinsworthGlusa{\log \gamma}} && \text{if } d=1 \text{ and } s<1/2\\
  m&\geq \frac{\left(\ell-s+2\right)\absAinsworthGlusa{\log h}+\log\left(1+\absAinsworthGlusa{\log h}\right)}{\absAinsworthGlusa{\log \gamma}} && \text{if } d=1 \text{ and } s=1/2\\
  m&\geq \frac{\left(\ell-s+d+2s\right)\absAinsworthGlusa{\log h}}{\absAinsworthGlusa{\log \gamma}} && \text{otherwise}.
\end{align*}

By following the arguments in \cite{SauterSchwab2010_BoundaryElementMethods}, it can be shown that the number of near field entries, i.e. the entries that need to be assembled using the quadrature rules described in \Cref{sec:comp-entr-stiffn}, scales linearly in \(n\).
The same conclusion holds for the number of far field cluster pairs.
Since the four dimensional integral contributions \(a^{K\times \tilde{K}}\) are evaluated using Gaussian quadrature rules with at most \(k\sim\log n\) quadrature nodes per dimension, the assembly of the near field contributions scales with \(n\log^{2d}n\) .
The far field kernel approximations and the shift coefficients have size \(m^{2d}\sim \log^{2d}n\), and are also calculated in \(\log^{2d}n\) complexity.
This means that all the kernel approximations and shift coefficients are obtained in \(n\log^{2d}n\) time.
Finally, the \(nm^{d}\) basis far-field coefficients require the evaluation of integrals using \(m+1\)-th order Gaussian quadrature, leading to a complexity of \(n\log^{2d}n\) as well.
The overall complexity of the panel clustering method is therefore \(\mathcal{O}\left(n\log^{2d}n\right)\), and the sparse approximation requires \(\mathcal{O}\left(n\log^{2d}n\right)\) memory.
In practice, this means that the assembly of the near-field matrix dominates the other steps but involves only local computations.

The computation of the matrix-vector product involving upward and downward recursion in the cluster tree and multiplication by the kernel approximations can also be shown to scale with \(\mathcal{O}\left(n\log^{2d}n\right)\).

As an aside, we note that one could also opt to use a conventional dense approximation of the discretised fractional Laplacian such as the ``hybrid'' scheme described in \cite{GrahamHackbuschEtAl2000_HybridGalerkinBoundaryElements} which reduces the far field computation to the computation of a ``Nystr\"om-type'' approximation.
While the complexity of this approach still scales as \(\mathcal{O}\left(n^{2}\right)\), the constant is significantly smaller than if the dense matrix were to be used.

We illustrate the above results by assembling both the full matrix as well as its sparse approximation on the unit disk for fractional orders \(s=0.25\) and \(s=0.75\).
The memory usage of the matrices are compared in \Cref{fig:memory}.
For low number of degrees of freedom, none of the cluster pairs are admissible, so the full matrix and its approximation have the same size.
Starting with roughly 2000 degrees of freedom, the memory footprint of the sparse approximate starts to follow the \(n\log^{4}n\) curve and therefore outperforms the full assembly.
\begin{figure}
  \centering
  \includegraphics{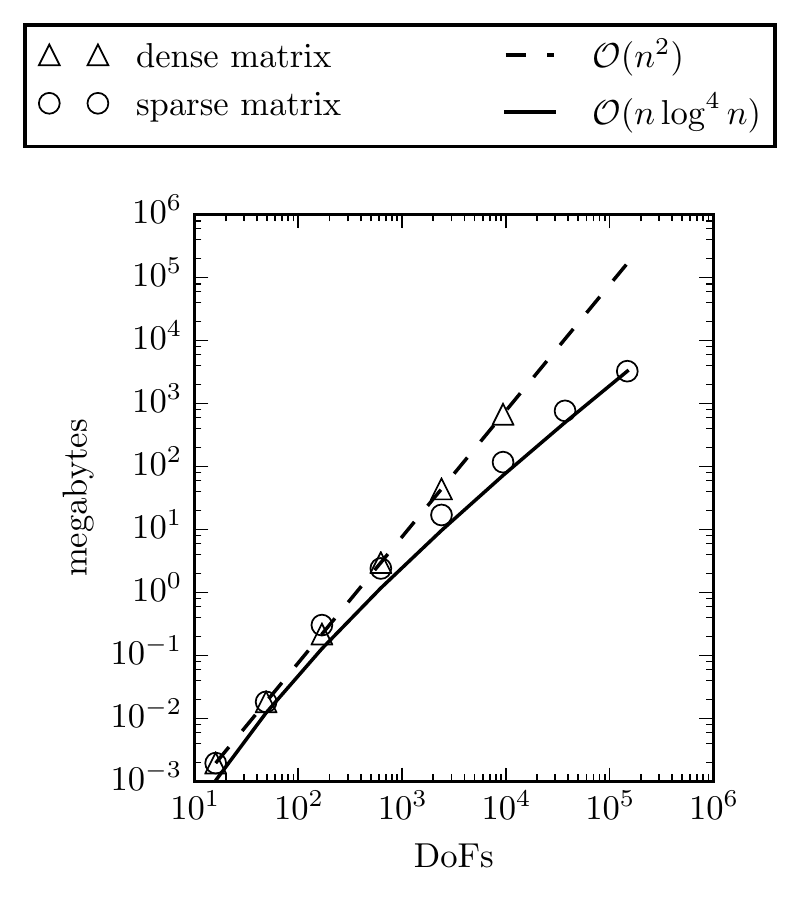}
  \includegraphics{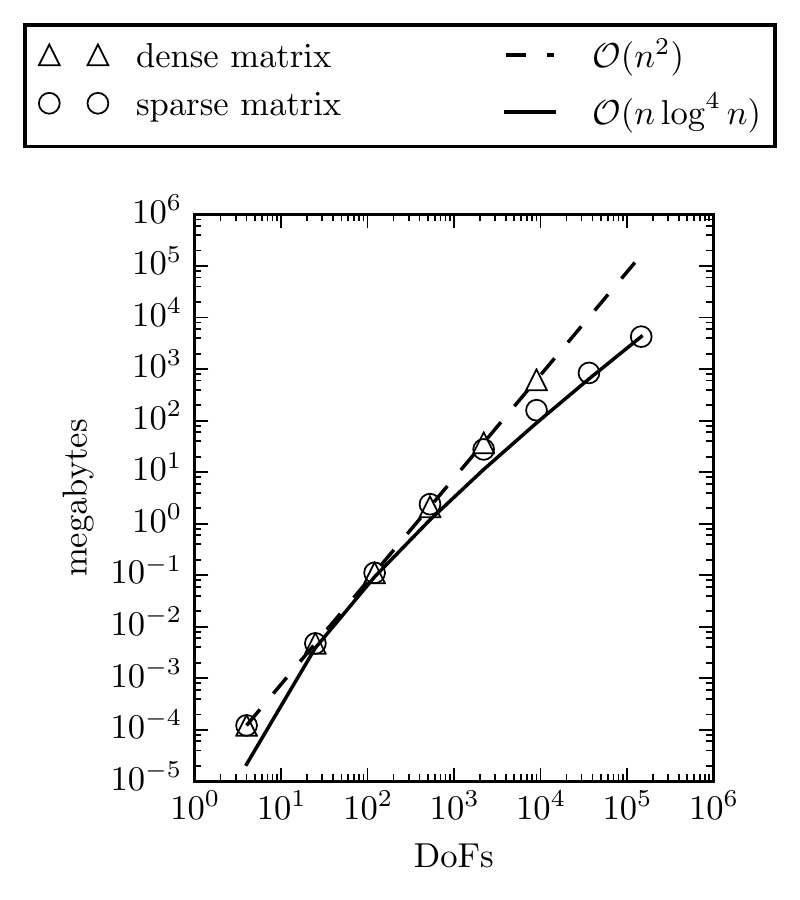}
  \caption{
    Memory usage of the dense matrix and its sparse approximation.
    \(s=0.25\) (\emph{top}), \(s=0.75\) (\emph{bottom}).
    While the dense matrix uses \(n^{2}\) floating-point numbers, the sparse approximation can be seen to require only \(\mathcal{O}\left(n\log^{4}n\right)\) memory.
    At roughly 2000 unknowns, the memory footprint of the sparse approximation separates from the \(\mathcal{O}\left(n^{2}\right)\) curve.
  }
  \label{fig:memory}
\end{figure}
The same behaviour can be observed for the assembly times, as seen in \Cref{fig:timeAssembly}.
\begin{figure}
  \centering
  \includegraphics{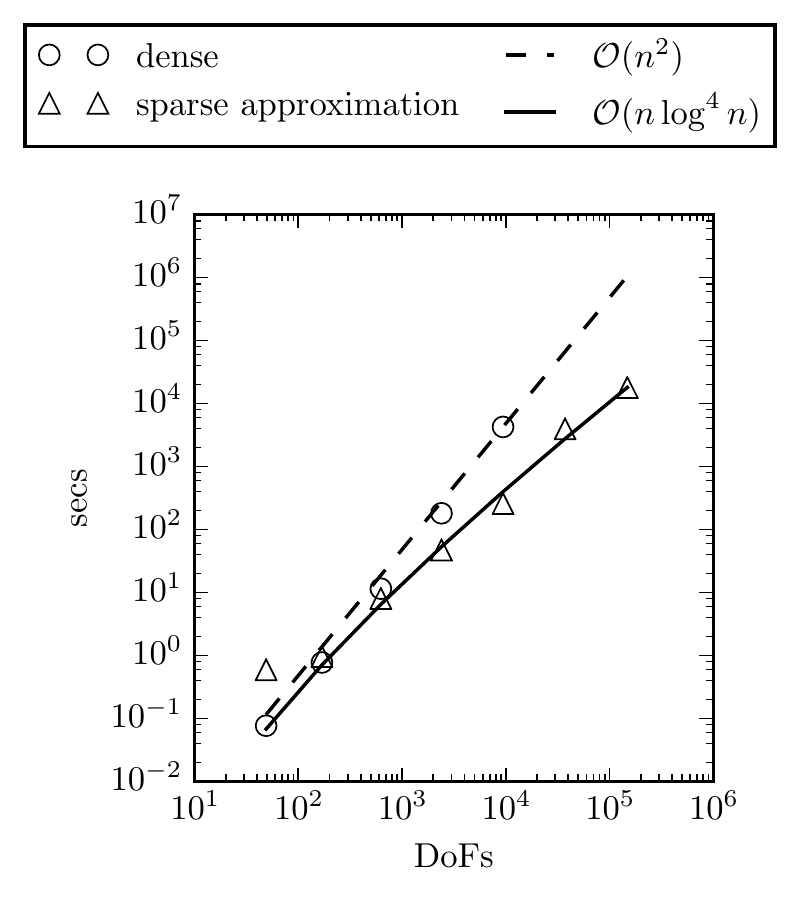}
  \includegraphics{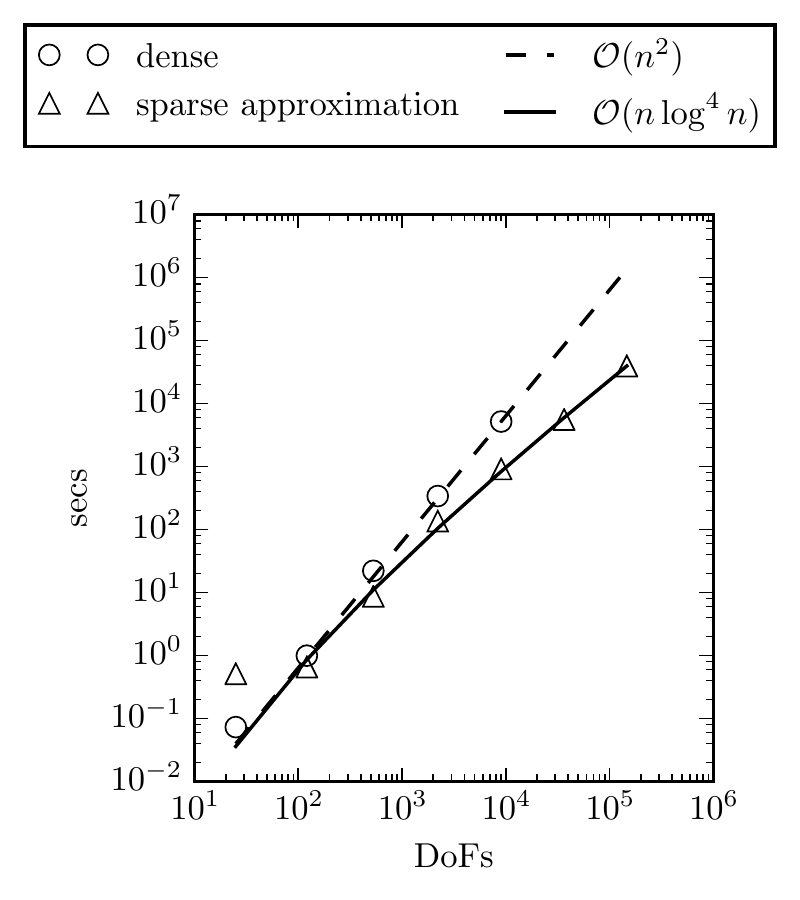}
  \caption{
    Assembly time of the dense matrix and its sparse approximation.
    \(s=0.25\) (\emph{top}), \(s=0.75\) (\emph{bottom}).
    The time to assemble the full matrix grows quadratically in the number of unknowns, whereas the sparse approximation starts to follow the \(n\log^{4}n\) curve at about 2000 degrees of freedom.
  }
  \label{fig:timeAssembly}
\end{figure}

\section{Applications}
\label{sec:numerical-experiments}

\subsection{Fractional Poisson Equation}
\label{sec:fract-poiss-equat}

We consider the fractional Poisson problem
\begin{align*}
  \begin{aligned}
    \left(-\Delta\right)^{s}u &= f &\text{in }\Omega, \\
    u&= 0 & \text{in }\Omega^{c}
  \end{aligned}
\end{align*}
on the unit disk \(\Omega = \left\{\vec{x}\in\mathbb{R}^{2}\mid\absAinsworthGlusa{\vec{x}}\leq 1\right\}\).
The discretised fractional Poisson problem then reads
\begin{align}
  \boldsymbol{A}^{s}\vec{u} &= \vec{b},\label{eq:5}
\end{align}
where \(u_{h}=\sum_{i=1}^{n}u_{i}\phi_{i}\in V_{h}\) is the approximation to the solution \(u\), and \(b_{i}=\left\langle f,\phi_{i}\right\rangle\).

Triangulations of the disc are obtained through uniform refinement of a uniform initial mesh.
After each refinement, the boundary nodes are projected onto the unit circle, resulting in triangulations of the type shown in \Cref{fig:disk}.

\begin{figure}
  \centering
  \includegraphics{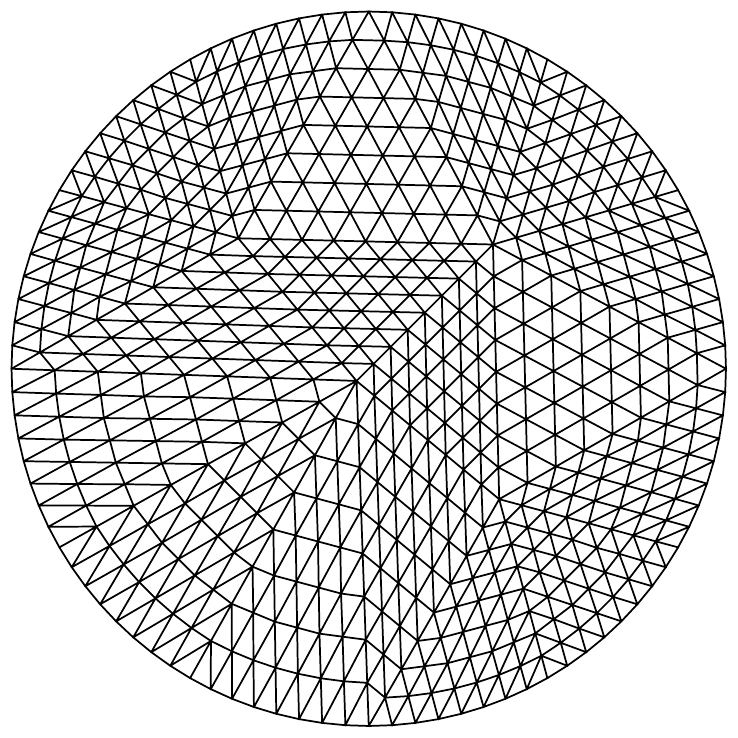}
  \caption{A quasi-uniform triangulation of the disc domain, obtained through uniform refinement followed by projection of the resulting boundary nodes back onto the unit circle.}
  \label{fig:disk}
\end{figure}

We first consider the test case introduced in \Cref{sec:finite-elem-appr} where \(f=1\) with analytic solution \cite{Getoor1961_FirstPassageTimesSymmetric} given by
\begin{align*}
  u^{s}\left(\vec{x}\right) := \frac{2^{-2s}}{\Gamma\left(1+s\right)^{2}} \left(1-\absAinsworthGlusa{\vec{x}}^{2}\right)^{s}.
\end{align*}
Both the full matrix and its sparse approximation are assembled for \(s\in\left\{0.25, 0.75\right\}\), and \cref{eq:5} is solved using LAPACK's \texttt{dgesv} routine and a multigrid solver in the dense case, and multigrid and conjugate gradient methods in the sparse case.
Two steps of pre- and postsmoothing by Jacobi iteration are used on every level of the multigrid solver.
Recall that solutions for \(s=0.25\) and \(s=0.75\) were shown in \Cref{fig:solutions}.
In \Cref{fig:errorHs,fig:errorL2}, the discretisation error is plotted in \(\widetilde{H}^{s}\left(\Omega\right)\) and in \(L^{2}\)-norm.
It can be seen that the rates predicted by \Cref{thm:Hsconv,thm:L2conv} of \(h^{1/2}\) and \(h^{1/2+\min(1/2,s)}\) are indeed obtained, and that the error curves for the full matrix and its sparse approximation are essentially indistinguishable.

\begin{figure}
  \centering
  \includegraphics{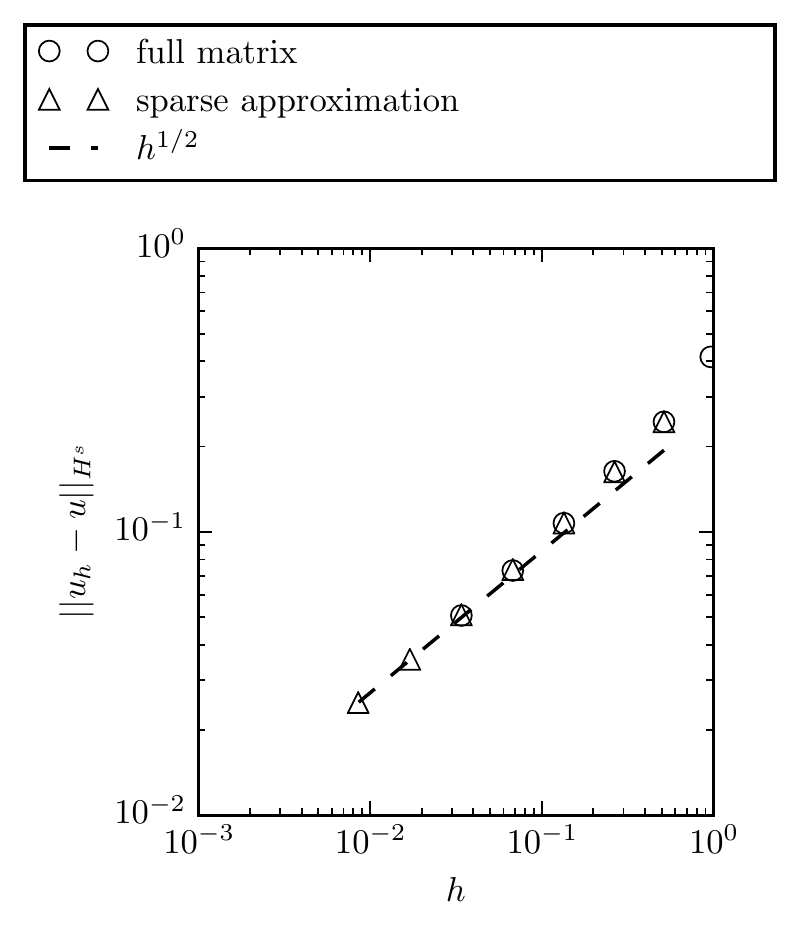}
  \includegraphics{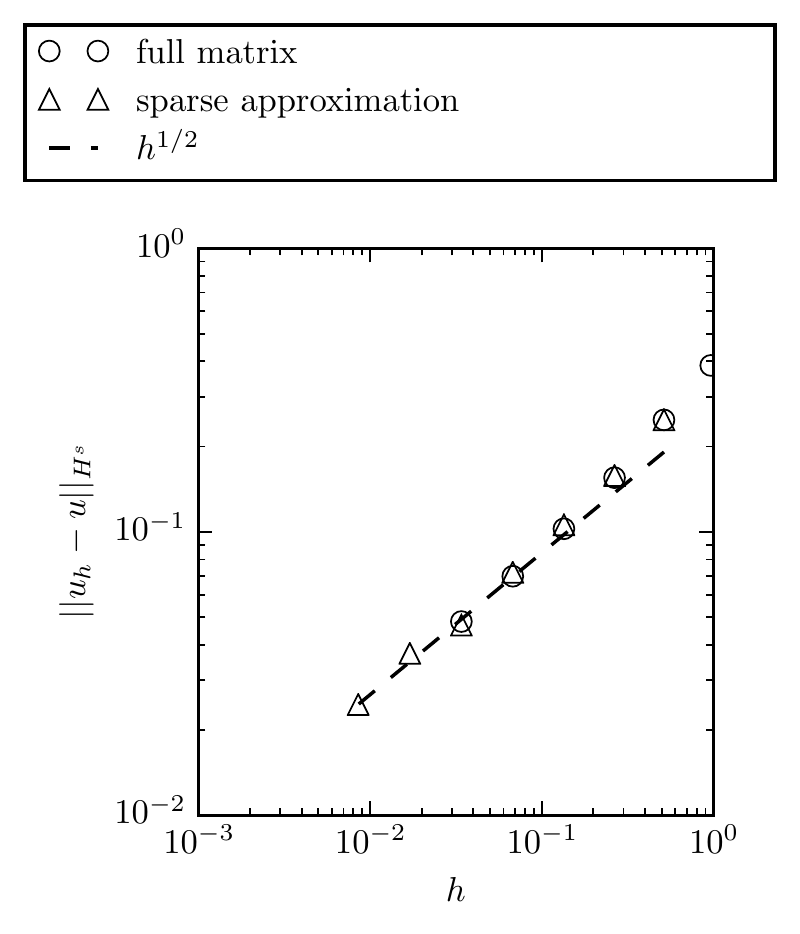}
  \caption{
    Error \(\normAinsworthGlusa{u^{s}-u_{h}}_{\widetilde{H}^{s}\left(\Omega\right)}\) for \(s=0.25\) (\emph{top}) and \(s=0.75\) (\emph{bottom}) in the case of solutions with singular behaviour close to the boundary.
    Both the full matrix and its sparse approximation are shown to achieve the predicted rate of \(h^{1/2}\).
  }
  \label{fig:errorHs}
\end{figure}
\begin{figure}
  \centering
  \includegraphics{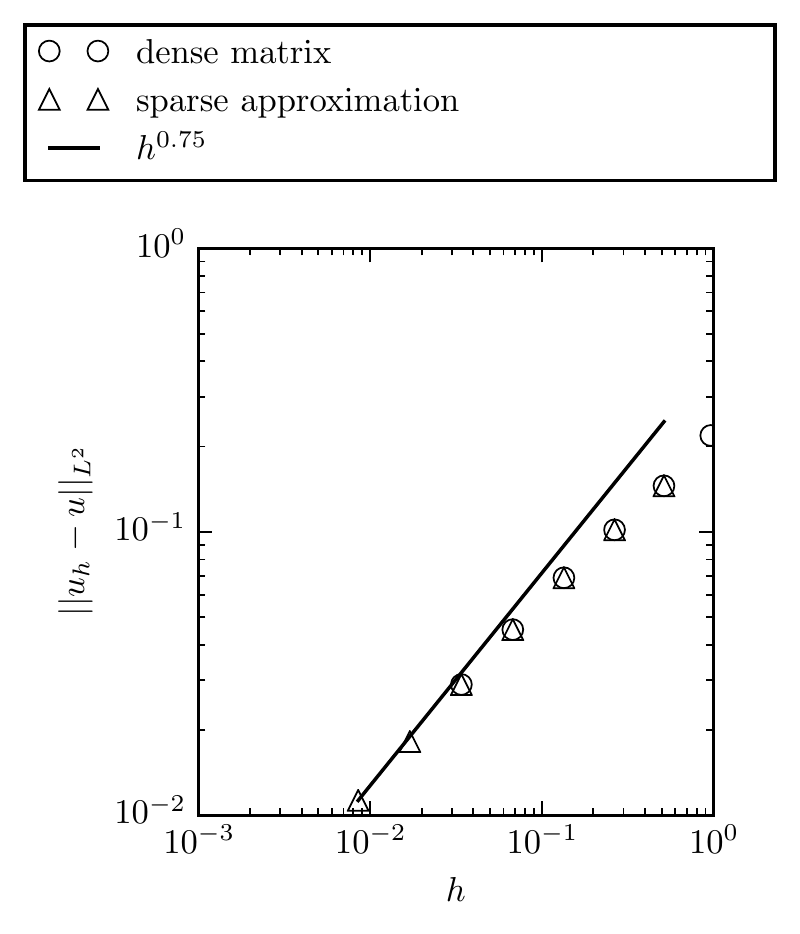}
  \includegraphics{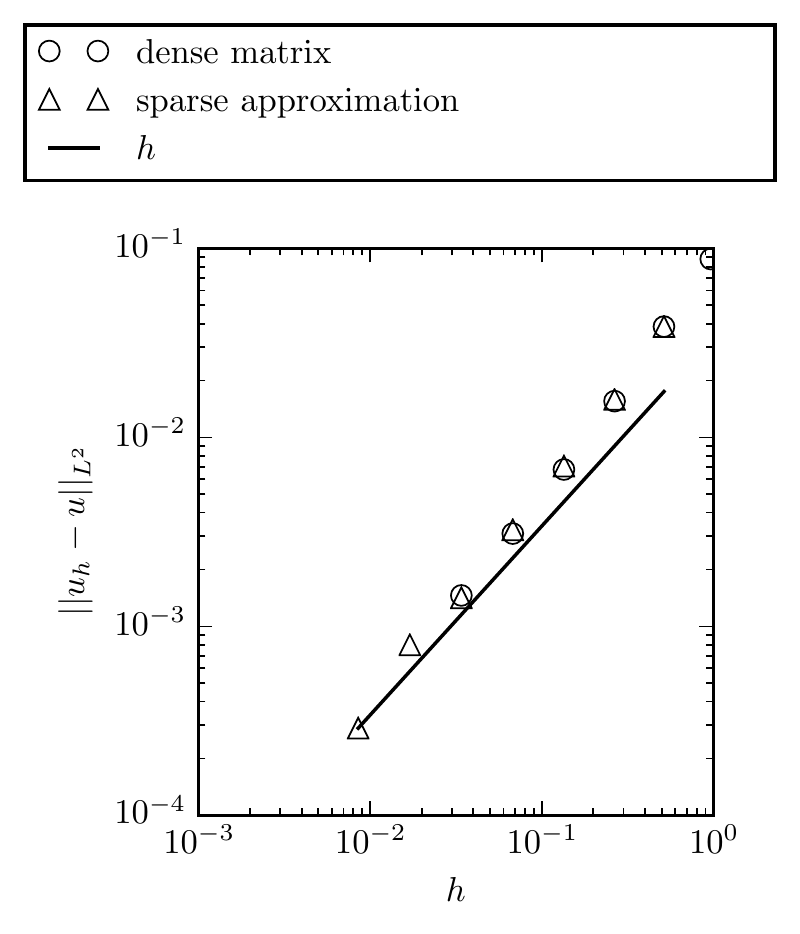}
  \caption{
    Error \(\normAinsworthGlusa{u^{s}-u_{h}}_{L^{2}}\) for \(s=0.25\) (\emph{top}) and \(s=0.75\) (\emph{bottom}) in the case of solutions with singular behaviour close to the boundary.
    Both the full matrix and its sparse approximation are shown to achieve the predicted rate of \(h^{1/2+\min\left\{s,1/2\right\}}\).
  }
  \label{fig:errorL2}
\end{figure}

For a second example, the right-hand side \(f\) is chosen such that \(u=1-\absAinsworthGlusa{\vec{x}}^{2}\in H^{2}\left(\Omega\right)\).
The action of \(f\) on \(v\in V_{h}\) is approximated by
\begin{align*}
  \left(f,v\right)&= a(I_{\underline{h}}u, v),
\end{align*}
where \(I_{\underline{h}}\) is the interpolation operator onto a highly refined mesh with \(\underline{h}<h\).
The resulting consistency error in this case is
\begin{align*}
  \sup_{v}\frac{\absAinsworthGlusa{a(u,v)-a(I_{\underline{h}}u,v)}}{\normAinsworthGlusa{v}_{\widetilde{H}^{s}\left(\Omega\right)}}
  \leq C\normAinsworthGlusa{u-I_{\underline{h}}u}_{\widetilde{H}^{s}\left(\Omega\right)}
  \leq C \underline{h}^{2-s}\absAinsworthGlusa{u}_{H^{2}}.
\end{align*}
Therefore, if \(\underline{h}\) is sufficiently smaller than \(h\), the consistency error will be negligible compared to the discretisation error.

The dependency of the error on the mesh size \(h\) can be seen in \Cref{fig:smooth_error}.
The discretisation error decays as \(h^{2-s}\) in \(\widetilde{H}^{s}\left(\Omega\right)\)-norm, and as \(h^{2}\) in \(L^{2}\)-norm, which are the optimal orders that we would expect based on estimate \eqref{eq:16}.

\begin{figure}
  \centering
  \includegraphics{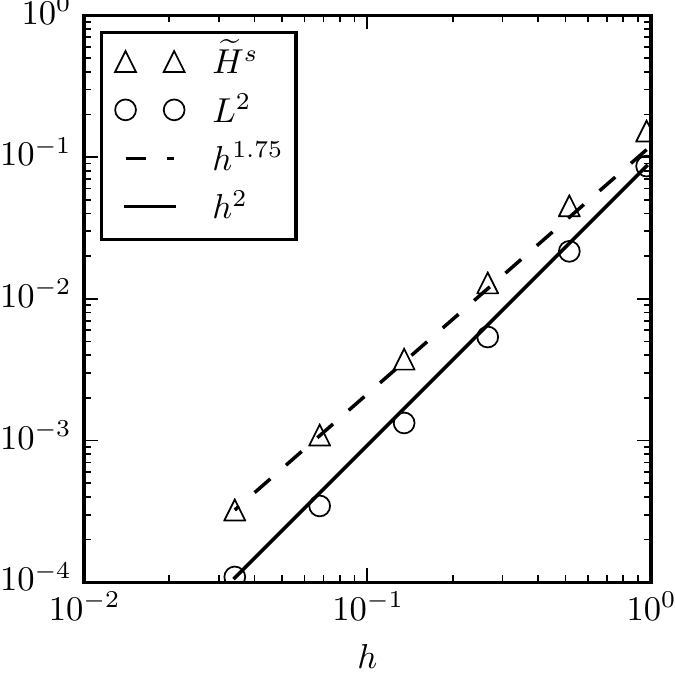}
  \includegraphics{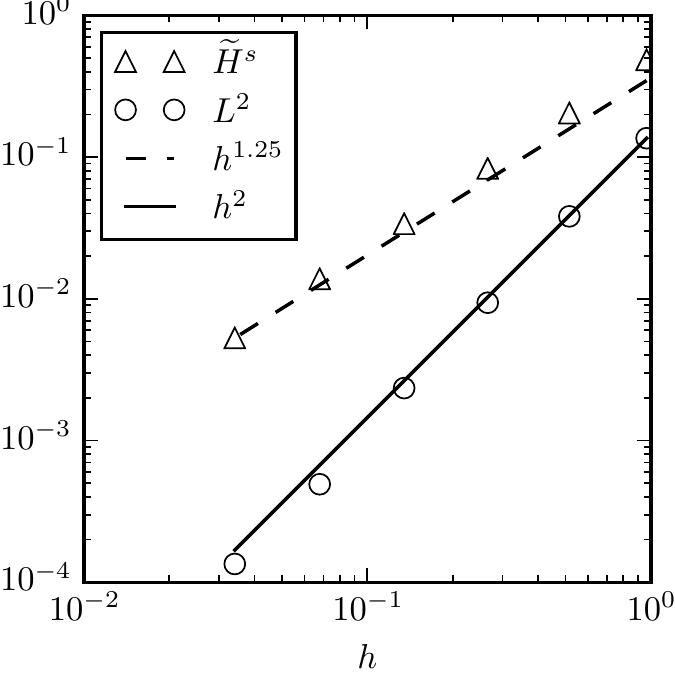}
  \caption{
    Errors \(\normAinsworthGlusa{u-u_{h}}_{\widetilde{H}^{s}\left(\Omega\right)}\) and \(\normAinsworthGlusa{u-u_{h}}_{L^{2}}\) for \(s=0.25\) (\emph{top}) and \(s=0.75\) (\emph{bottom}) in the case of a smooth solution \(u(\vec{x})=1-\absAinsworthGlusa{\vec{x}}^{2}\in H^{2}\left(\Omega\right)\).
    Optimal orders are achieved both in \(\widetilde{H}^{s}\left(\Omega\right)\)- and \(L^{2}\)-norm.
  }
  \label{fig:smooth_error}
\end{figure}

Summarising the results of \Cref{sec:solut-syst-involv,sec:cluster-method}, we expect different solvers for the fractional Laplacian to have complexities as given in \Cref{tab:complexities}.
\begin{table*}[h]
  \centering
  \normalsize
  \begin{tabular}{lcc}
    Method & dense matrix & sparse approximation\\
    \hline
    Dense Solver& \(n^{3}\)& -- \\
    Conjugate Gradient & \(n^{2+s/d}\)& \(n^{1+s/d}\left(\log n\right)^{2d}\)\\
    Multigrid& \(n^{2}\) & \(n\left(\log n\right)^{2d}\)
  \end{tabular}
  \caption{Asymptotic complexities of different solvers for the discretised fractional Poisson problem \(\boldsymbol{A}^{s}\vec{u}=\vec{b}\).}\label{tab:complexities}
\end{table*}
The timings for the different combinations of dense or sparse matrix with a solver are shown in \Cref{fig:timeSolve}.
It can be observed that the sparse approximation asymptotically outperforms the dense solvers.
Moreover, for the larger value of \(s\), the multigrid solver starts to outperform the conjugate gradient method for increasingly smaller numbers of unknowns as one would expect based on earlier arguments.
\begin{figure}
  \centering
  \includegraphics{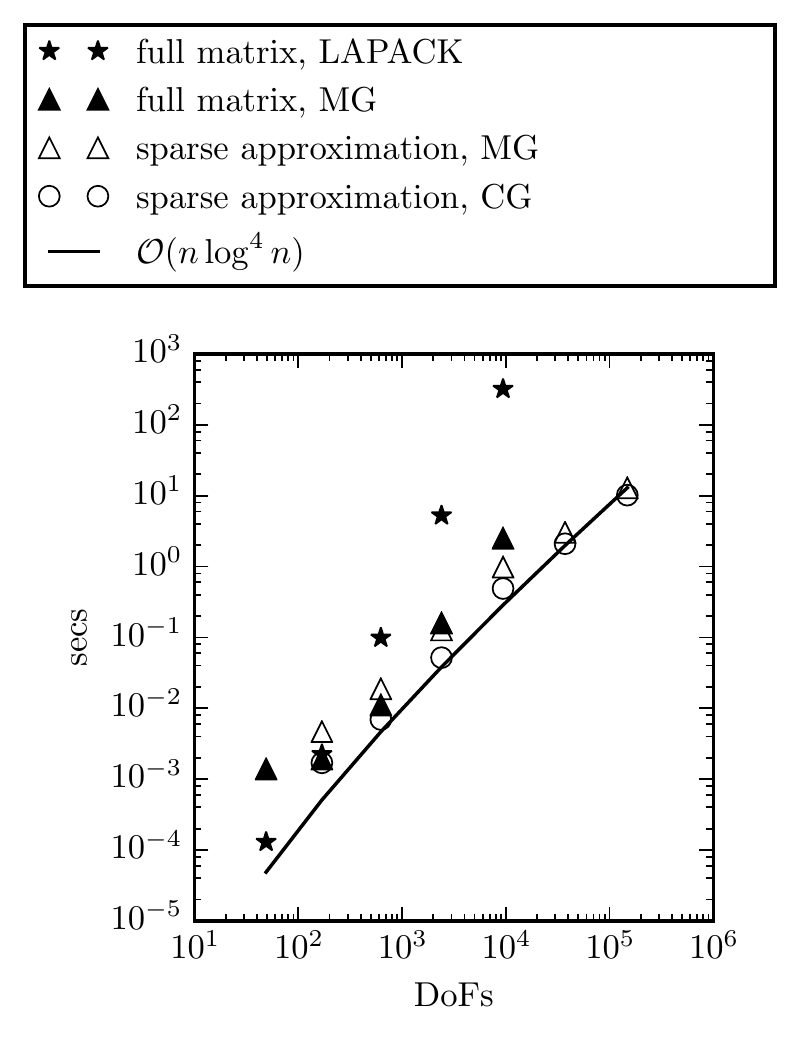}
  \includegraphics{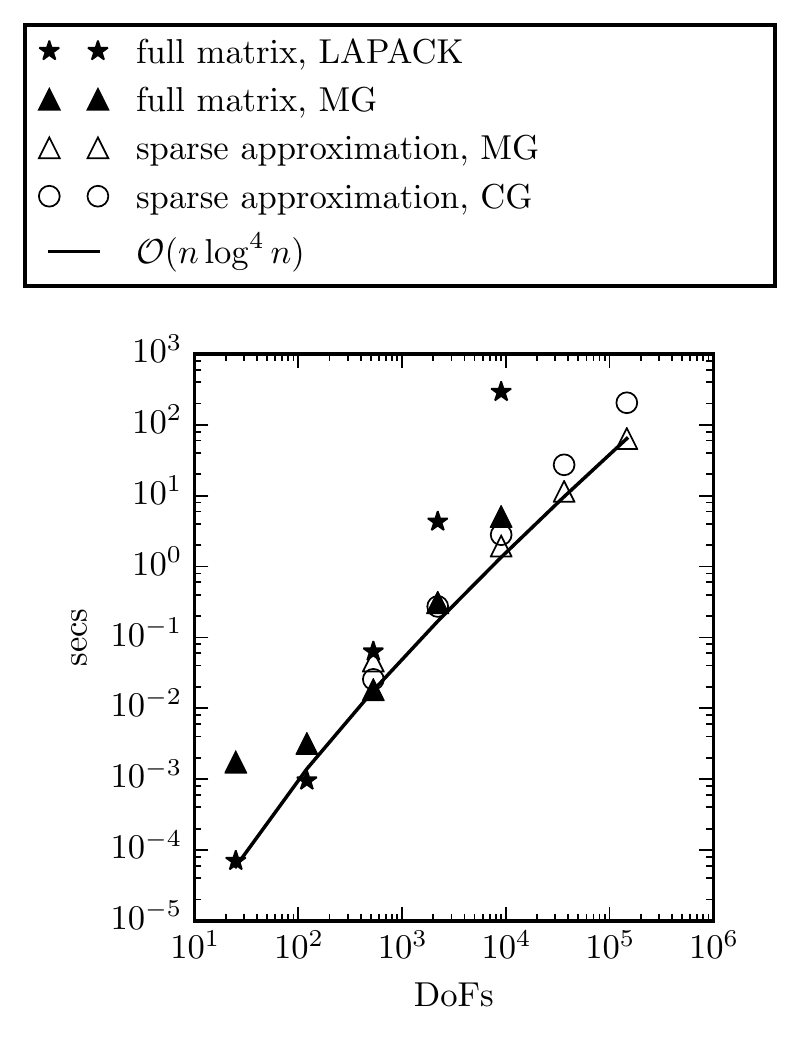}
  \caption{
    Solution time for the fractional Laplacian using different solvers and the full matrix and its sparse approximation for \(s=0.25\) (\emph{top}) and \(s=0.75\) (\emph{bottom}).
    The solvers using the full matrix are outperformed by the ones based on the sparse approximation.
    For larger fractional order \(s\), the break-even between conjugate gradient and multigrid iteration occurs at a lower number of unknowns.
  }
  \label{fig:timeSolve}
\end{figure}

\subsection{Fractional Heat Equation}
\label{sec:fract-heat-equat}

The fractional heat equation is given by
\begin{align*}
  \begin{aligned}
    u_{t}+\left(-\Delta\right)^{s}u & = f && \text{in }\Omega, \\
    u &=0 && \text{in } \Omega^{c}.
  \end{aligned}
\end{align*}
We propose to approximate the problem using an implicit method in time.
The simplest such scheme is the backward Euler method
\begin{align*}
  \left(\boldsymbol{M}+\Delta t~\boldsymbol{A}^{s}\right)\vec{u}^{k+1} &= \boldsymbol{M}\vec{u}^{k} + \Delta t \vec{f}^{k+1},
\end{align*}
where \(u(\cdot, k\Delta t)\approx \sum_{i}u_{i}^{k}\phi_i\) and \(f^{k}_{i}=\left(f(\cdot,k\Delta t), \phi_{i}\right)\).

More generally, let us assume that a scheme of order \(\alpha\) is used in time.
In order to obtain optimal convergence in \(L^{2}\)-norm, in view of \Cref{thm:L2conv}, we shall choose \(\Delta t^{\alpha}\sim h^{1/2+\min(1/2,s)}\), i.e.
\begin{align*}
  \Delta t_{L^{2}} \sim h^{\min\left(2,1+2s\right)/(2\alpha)}.
\end{align*}
On the other hand, if optimal \(\widetilde{H}^{s}\left(\Omega\right)\)-convergence is desired, we need \(\Delta t_{\widetilde{H}^{s}\left(\Omega\right)}\sim h^{1/(2\alpha)}\), see \Cref{thm:Hsconv}.
Consequently, if an order \(\alpha\) scheme is used for time stepping, with optimal time step \(\Delta t_{L^{2}}\) or \(\Delta t_{\widetilde{H}^{s}\left(\Omega\right)}\), we find by \Cref{lem:condFracLaplTime} that the condition numbers of the iteration matrix satisfy
\begin{align*}
  \kappa\left(\boldsymbol{M}+\Delta t_{L^{2}}~\boldsymbol{A}^{s}\right)
  &\leq C \left(1+h^{\min\left(2,1+2s\right)/(2\alpha)-2s}\right),\\
  \kappa\left(\boldsymbol{M}+\Delta t_{\widetilde{H}^{s}\left(\Omega\right)}~\boldsymbol{A}^{s}\right)
  & \leq C \left(1+h^{1/(2\alpha)-2s}\right).
\end{align*}
In particular, in the \(L^{2}\) case, this shows that the condition number will not grow at all as the mesh size decreases if \(s\in (0,1/\left(4\alpha-2\right)]\).
For fractional orders \(s\) that are slightly larger than  \(1/(4\alpha-2)\), the condition number only grows very slowly as the mesh size is decreased.
The larger the fractional order, the faster the linear system becomes ill-conditioned.
In the  \(\widetilde{H}^{s}\left(\Omega\right)\) case, the condition number of the linear system grows as the mesh size is decreased for \(s> 1/\left(4\alpha\right)\).

We illustrate the consequences of the above result in the case of a second order accurate time stepping scheme (\(\alpha=2\)), and for \(s=0.25\) and \(s=0.75\).
In the case of \(s=0.25\), \(\Delta t_{L^{2}}\sim h^{3/8}\) and \(\kappa\left(\boldsymbol{M}+\Delta t_{L^{2}}~\boldsymbol{A}^{s}\right)\sim 1+h^{-1/8}\).
This suggests that the conjugate gradient method will deliver good results for a wide range of mesh sizes \(h\), as the number of iterations will only grow as \(\sqrt{\kappa\left(\boldsymbol{M}+\Delta t_{L^{2}}~\boldsymbol{A}^{s}\right)}\sim h^{-1/16}\).
The convergence of the multigrid method does not depend on the condition number and is essentially independent of \(h\).
This is indeed what is observed in the top part of \Cref{fig:CG-MG-time}.
In \Cref{fig:CG-MG-iter}, the number of iterations is shown.
It can be observed that for \(s=0.25\) both the multigrid and the conjugate gradient solver require an essentially constant number of iterations for varying values of \(\Delta t\).

On the other hand, for \(s=0.75\), \(\Delta t_{L^{2}}\sim h^{1/2}\) and \(\kappa\left(\boldsymbol{M}+\Delta t_{L^{2}}~\boldsymbol{A}^{s}\right)\sim 1+h^{-1}\).
Therefore, the condition number increases a lot faster as \(h\) goes to zero, and we expect that multigrid asymptotically outperforms the CG solver.
This is indeed what is observed in \Cref{fig:CG-MG-iter,fig:CG-MG-time}.

\begin{table*}
  \centering
  \normalsize
  \begin{tabular}{l|c|c}
    Method & \(\Delta t = \Delta t_{L^{2}}\) & \(\Delta t = \Delta t_{\widetilde{H}^{s}\left(\Omega\right)}\) \\
    \hline
    Conjugate Gradient & \(n^{1+2s/d-\min\left(2,1+2s\right)/(2\alpha d)} \left(\log n\right)^{2d}\)& \(n^{1+2s/d-1/(2\alpha d)} \left(\log n\right)^{2d}\)\\
    Multigrid & \(n \left(\log n\right)^{2d}\)& \(n \left(\log n\right)^{2d}\)
  \end{tabular}
  \caption{Complexity of different solvers for \(\left(\boldsymbol{M}+\Delta t\boldsymbol{A}^{s}\right)\vec{u}=\vec{b}\) for \(\Delta t=\Delta t_{L^{2}}\) and \(\Delta t=\Delta t_{\widetilde{H}^{s}\left(\Omega\right)}\) for an \(\alpha\)-order time stepping scheme.}
  \label{tab:solverTimestepping}
\end{table*}
The complexities of the different solvers for different choices of time step size are summarised in \Cref{tab:solverTimestepping}.

\begin{figure}
  \centering
  \includegraphics{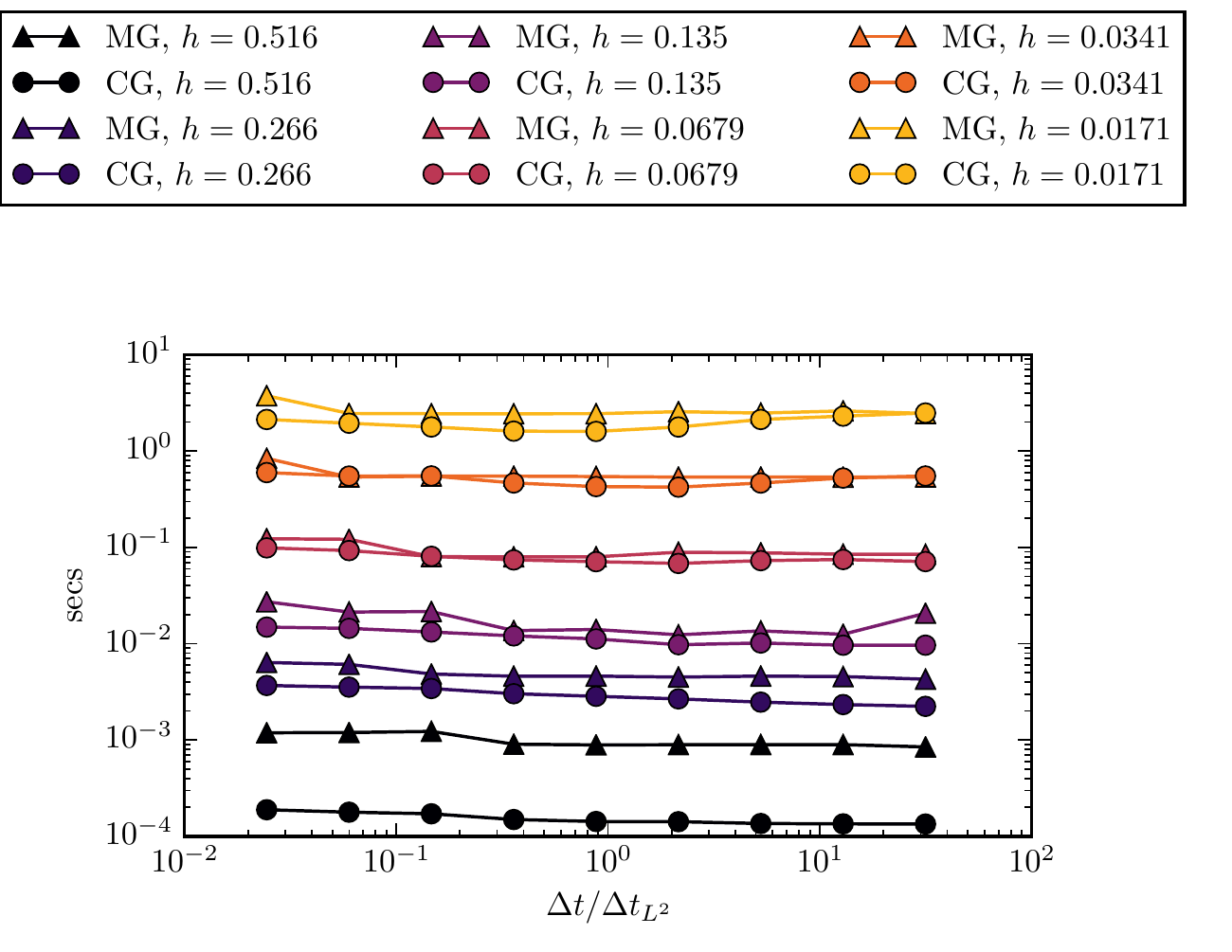}
  \includegraphics{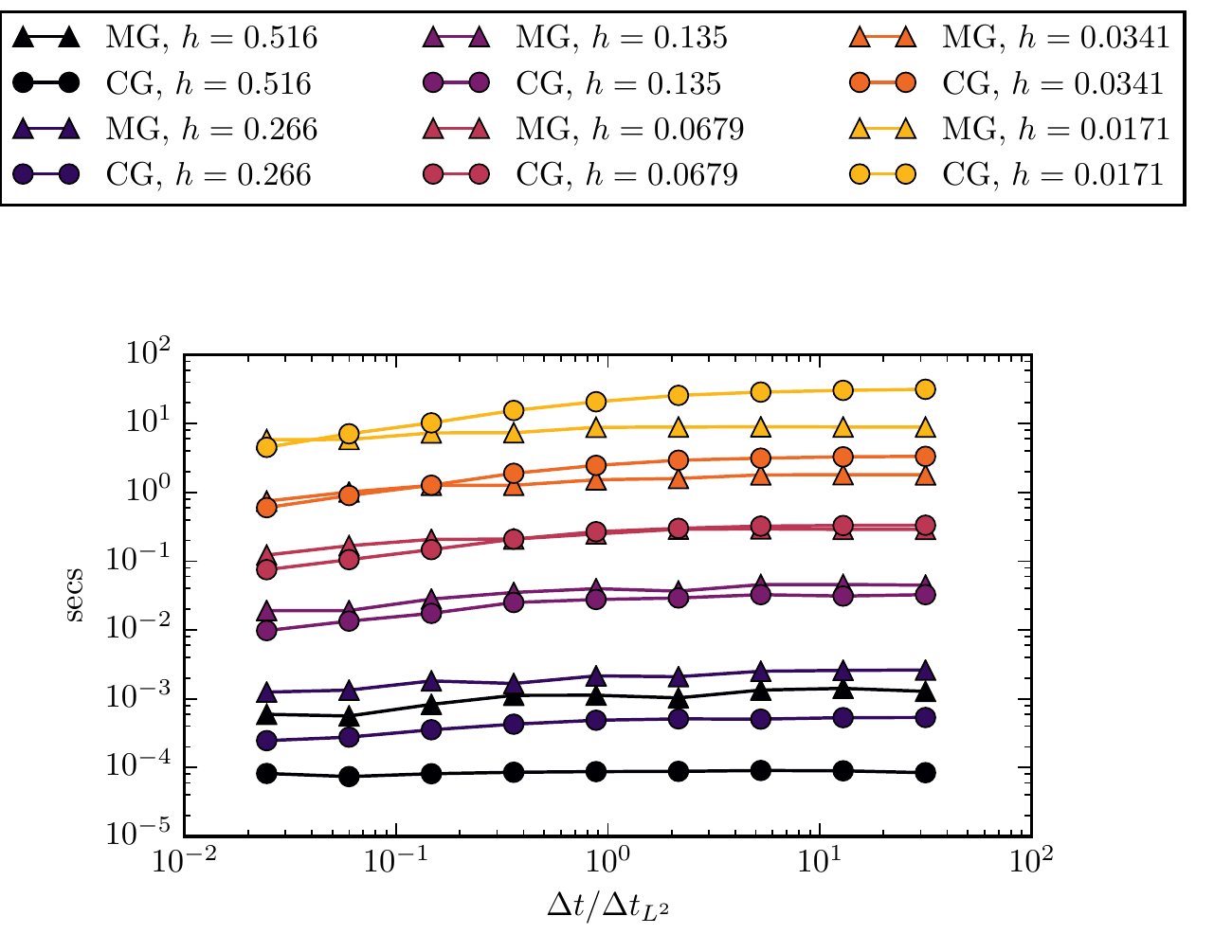}
  \caption{
    Timings in seconds for CG and MG depending on \(\Delta t\) for \(s=0.25\) (\emph{top}) and \(s=0.75\) (\emph{bottom}).
    It can be observed that, for \(s=0.25\), the conjugate gradient method is essentially on par with the multigrid solver.
    For \(s=0.75\), the multigrid solver asymptotically outperforms the conjugate gradient method, since the condition number \(\kappa\left(\boldsymbol{M}+\Delta t_{L^{2}}\boldsymbol{A}^{s}\right)\) grows as \(h^{-1}\).
  }
  \label{fig:CG-MG-time}
\end{figure}

\begin{figure}
  \centering
  \includegraphics{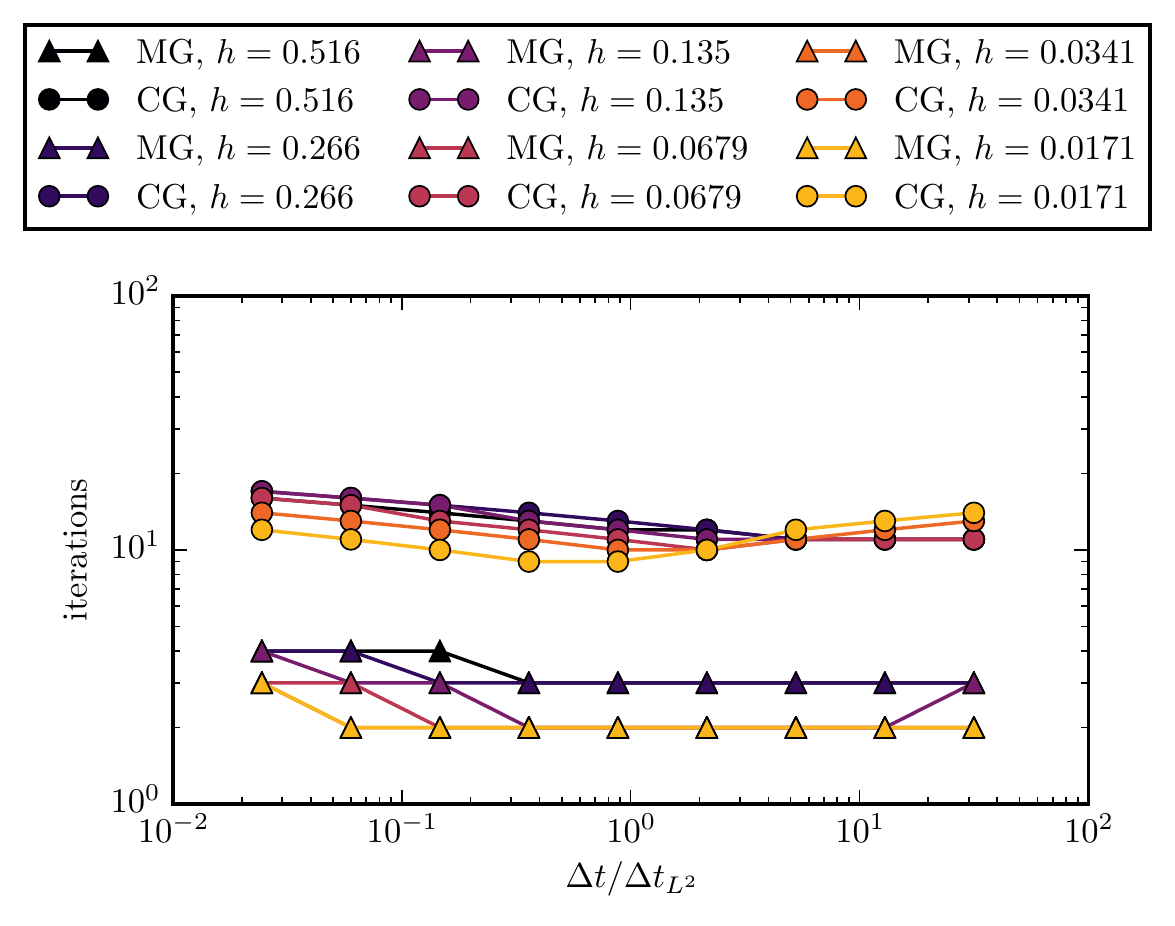}
  \includegraphics{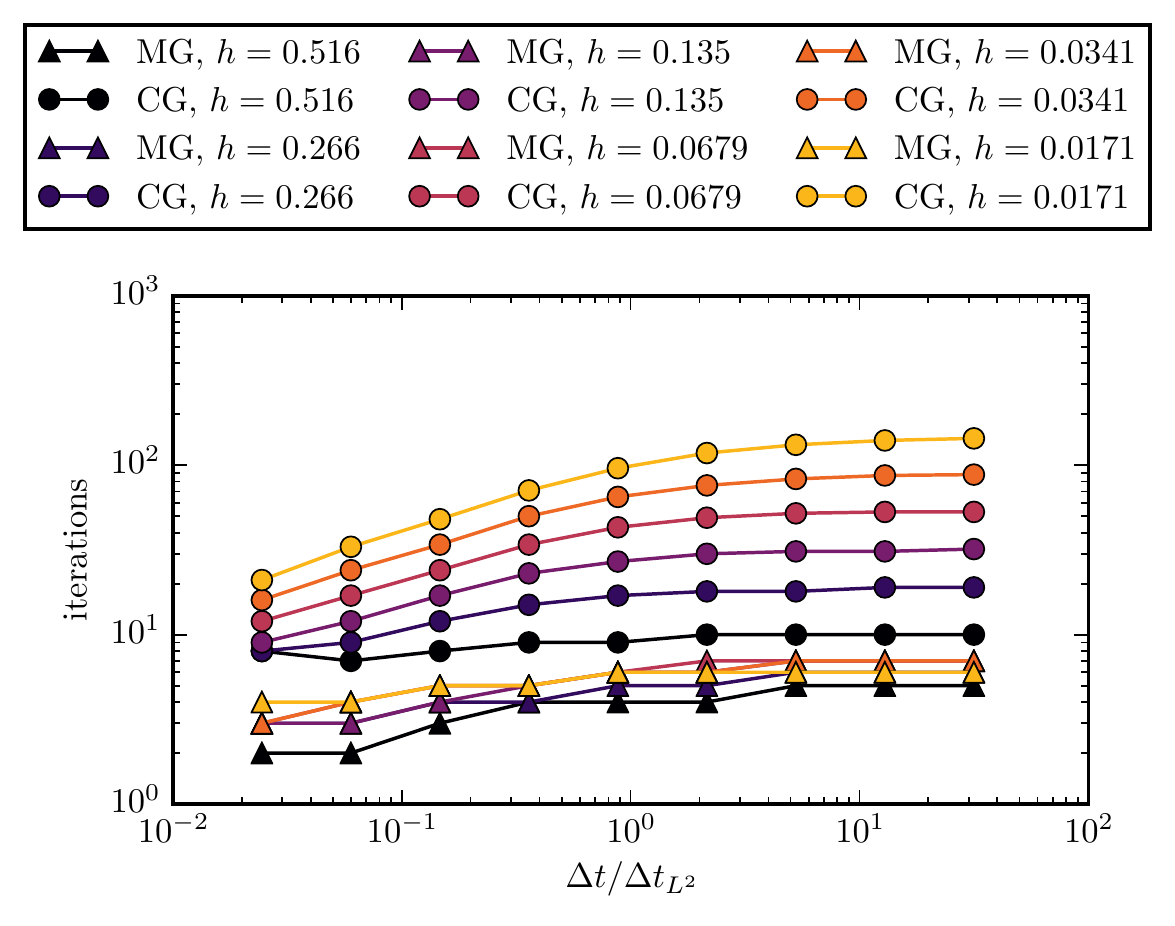}
  \caption{
    Number of iterations for CG and MG depending on \(\Delta t\) for \(s=0.25\) (\emph{top}) and \(s=0.75\) (\emph{bottom}).
    For \(s=0.25\), the number of iterations is essentially independent of \(\Delta t\).
    For \(s=0.75\), the number of iterations of the multigrid solver is independent of \(\Delta t\), but the iterations count for conjugate gradient grows with \(h^{-1/2}\).
  }
  \label{fig:CG-MG-iter}
\end{figure}

\subsection{Fractional Reaction-Diffusion Systems}
\label{sec:fract-react-diff}

In \cite{GolovinMatkowskyEtAl2008_TuringPatternFormationBrusselator}, a space-fractional Brusselator model was analysed and compared to the classical integer-order case.
The coupled system of equations is given by
\begin{align*}
  \frac{\partial X}{\partial t} &= -D_{X}\left(-\Delta\right)^{\alpha} X + A - (B+1)X + X^{2}Y, \\
  \frac{\partial Y}{\partial t} &= -D_{Y}\left(-\Delta\right)^{\beta} Y +BX - X^{2}Y.
\end{align*}
Here, \(D_{X}\) and \(D_{Y}\) are diffusion coefficients, \(A\) and \(B\) are reaction parameters, and \(\alpha\) and \(\beta\) determine the type of diffusion.
By rewriting the solutions as deviations from the stationary solution \(X=A\), \(Y=B/A\) and rescaling, one obtains
\begin{align}
  \frac{\partial u}{\partial t} &= -\left(-\Delta\right)^{\alpha} u + (B-1)u + Q^{2}v +\frac{B}{Q}u^{2} + 2Quv + u^{2}v, \label{eq:10}\\
  \eta^{2}\frac{\partial v}{\partial t} &= -\left(-\Delta\right)^{\beta} v - B u - Q^{2}v -\frac{B}{Q}u^{2} - 2Quv - u^{2}v, \label{eq:11}
\end{align}
with \(\eta=\sqrt{D_{Y}/D_{X}^{\beta/\alpha}}\) and \(Q=A\eta\).

In \cite{GolovinMatkowskyEtAl2008_TuringPatternFormationBrusselator} the equations were augmented with periodic boundary conditions and approximated using a pseudospectral method for various different parameter combinations.
Here, thanks to the foregoing developments, we have the flexibility to handle more general domains and, in particular, we consider the case where \(\Omega\) corresponds to a Petri-dish, i.e. \(\Omega=\left\{\vec{x}\in\mathbb{R}^{2}\mid\absAinsworthGlusa{\vec{x}}\leq 1\right\}\) is the unit disk.
We solve the above set of equations using a second order accurate IMEX scheme proposed by Koto \cite{Koto2008_ImexRunge}, whose Butcher tableaux are given by \Cref{tab:imex}.
\begin{table}
  \centering
  \begin{tabular}{r|cccc}
    0& 0&&& \\
    1&0&1&& \\
    1/2&0&-1/2&1& \\
    1 &0&-1&1&1 \\
    \hline
     &0&-1&1&1
  \end{tabular}
  \begin{tabular}{r|cccc}
    0& &&& \\
    1&1&&& \\
    1/2&0&0&& \\
    1 &0&0&1& \\
    \hline
     &0&0&1&0
  \end{tabular}
  \caption{IMEX scheme by Koto. Implicit scheme on the left, explicit on the right.}\label{tab:imex}
\end{table}
The diffusive parts are treated implicitly and therefore require the solution of several systems all of which are of the type \(\boldsymbol{M}+c\Delta t \boldsymbol{A}^{s}\) with appropriate values of \(c\).

In order to verify the correct convergence behaviour, we add forcing functions \(f\) and \(g\) to the system,
chosen such that the analytic solution is given by
\begin{align*}
  u&= \eta\sin(t) u^{s}(\vec{x}), \\
  v&= \eta^{-1}\cos(2t) u^{s}(\vec{x}),
\end{align*}
for suitable initial conditions, where \(u^{s}\) is the solution of the fractional Poisson problem with constant right-hand side.
We take \(\alpha=\beta=0.75\), and choose \(\Delta t\sim h^{1/2}\), since we already saw that the rate of the spatial approximation in \(L^{2}\)-norm is of order \(h\).
We measure the error as
\begin{align*}
  e_{L^{2}}^{u} &= \max_{0\leq t_{i}\leq 10}\normAinsworthGlusa{u(t_{i},\cdot)-u_{h}^{i}}_{L^{2}},
  &e_{L^{2}}^{v} &= \max_{0\leq t_{i}\leq 10}\normAinsworthGlusa{v(t_{i},\cdot)-v_{h}^{i}}_{L^{2}}, \\
  e_{\widetilde{H}^{s}\left(\Omega\right)}^{u} &= \max_{0\leq t_{i}\leq 10}\normAinsworthGlusa{u(t_{i},\cdot)-u_{h}^{i}}_{\widetilde{H}^{s}\left(\Omega\right)},
  &e_{\widetilde{H}^{s}\left(\Omega\right)}^{v} &= \max_{0\leq t_{i}\leq 10}\normAinsworthGlusa{v(t_{i},\cdot)-v_{h}^{i}}_{\widetilde{H}^{s}\left(\Omega\right)}.
\end{align*}
From the error plots in \Cref{fig:errorBrusselator}, it can be observed that \(e_{L^{2}}\sim h\) and \(e_{V}\sim h^{1/2}\), as expected.
\begin{figure}
  \centering
  \includegraphics{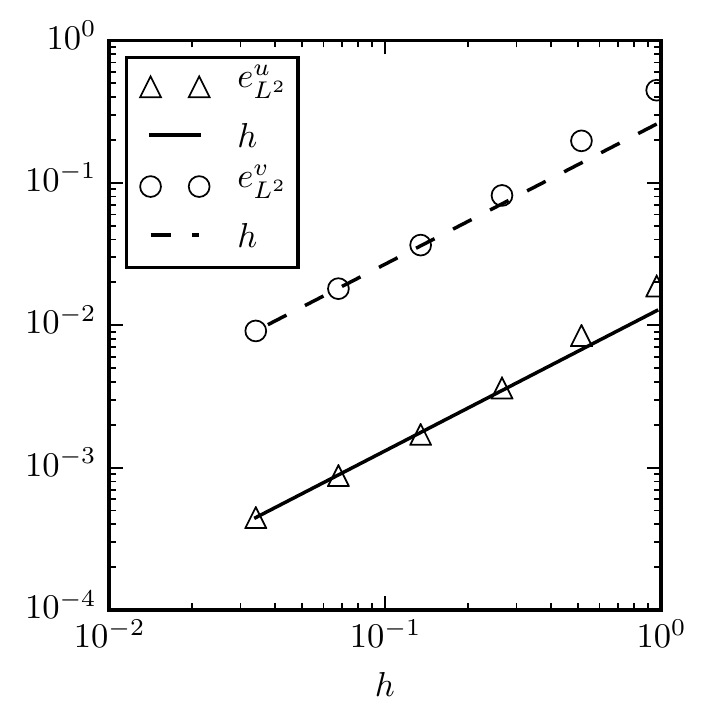}
  \includegraphics{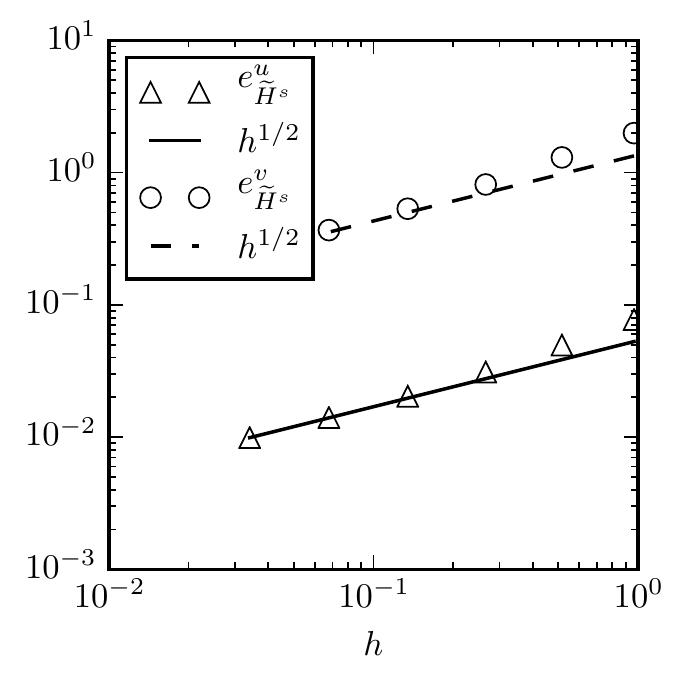}
  \caption{
    Error in \(L^{2}\)-norm (\emph{top}) and \(\widetilde{H}^{s}\left(\Omega\right)\)-norm (\emph{bottom}) in the Brusselator model.
    Optimal orders of convergence are achieved. (Compare \Cref{thm:L2conv,thm:Hsconv}.)
  }
  \label{fig:errorBrusselator}
\end{figure}

Having verified the accuracy of the method, we turn to the solution of the system \cref{eq:10,eq:11} augmented with exterior Neumann conditions as described in \Cref{sec:weak-formulation}.
Golovin, Matkowsky and Volpert \cite{GolovinMatkowskyEtAl2008_TuringPatternFormationBrusselator} observed that for \(\eta=0.2\), \(B=1.22\) and \(Q=0.1\), a single localised perturbation would first form a ring and then break up into spots.
The radius of the ring and the number of resulting spots increases as the fractional orders are decreased.
In \Cref{fig:brusselator}, simulation results for \(\alpha=\beta=0.625\) and \(\alpha=\beta=0.75\) are shown.
We observe that in both cases, an initially circular perturbation develops into a ring.
Lower diffusion coefficients do lead to a larger ring, which breaks up later and into more spots.
In the last row, we can see that the resulting spots start to replicate and spread out over the whole domain.
\begin{figure}
  \centering
  \includegraphics[width=0.4\linewidth]{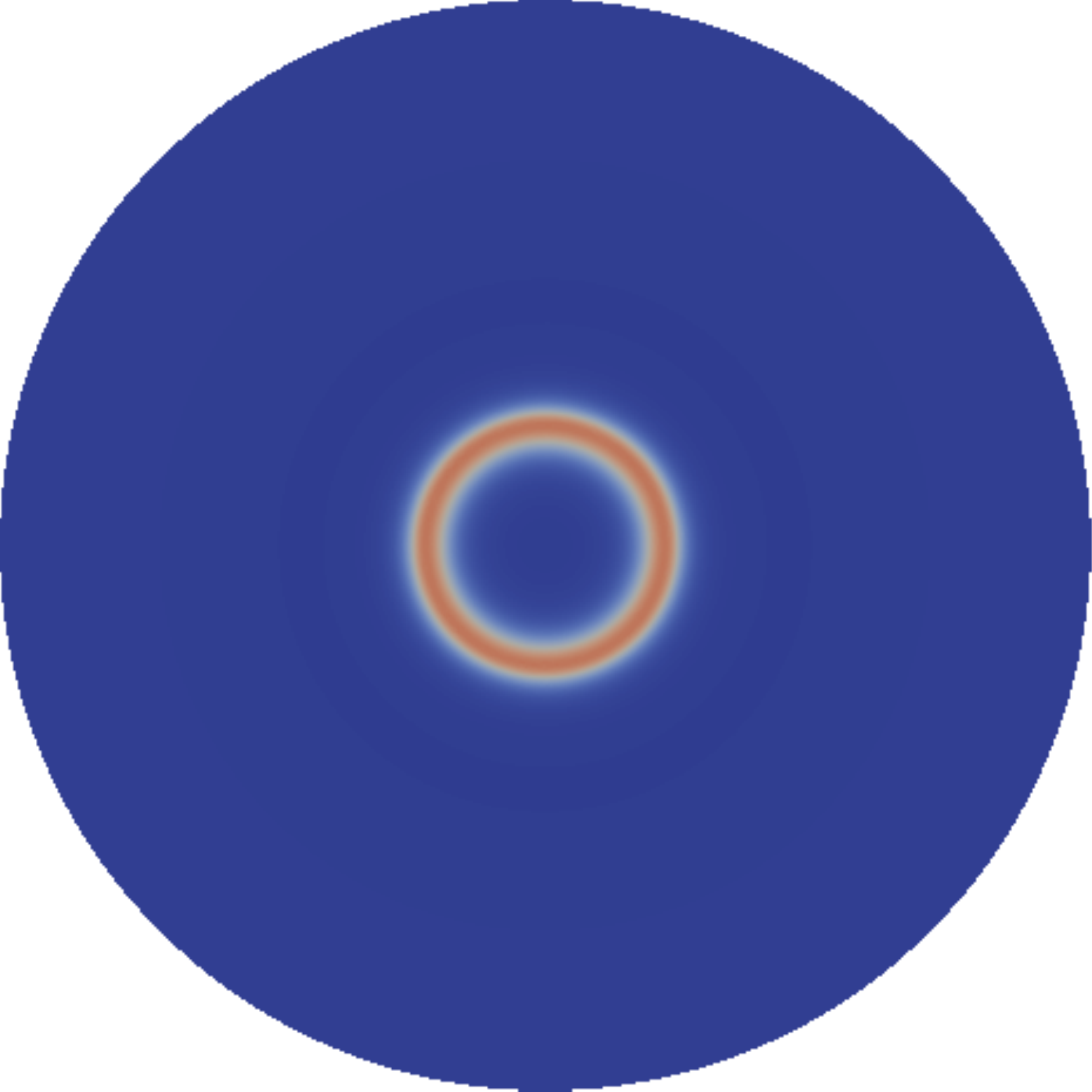}\hfil\includegraphics[width=0.4\linewidth]{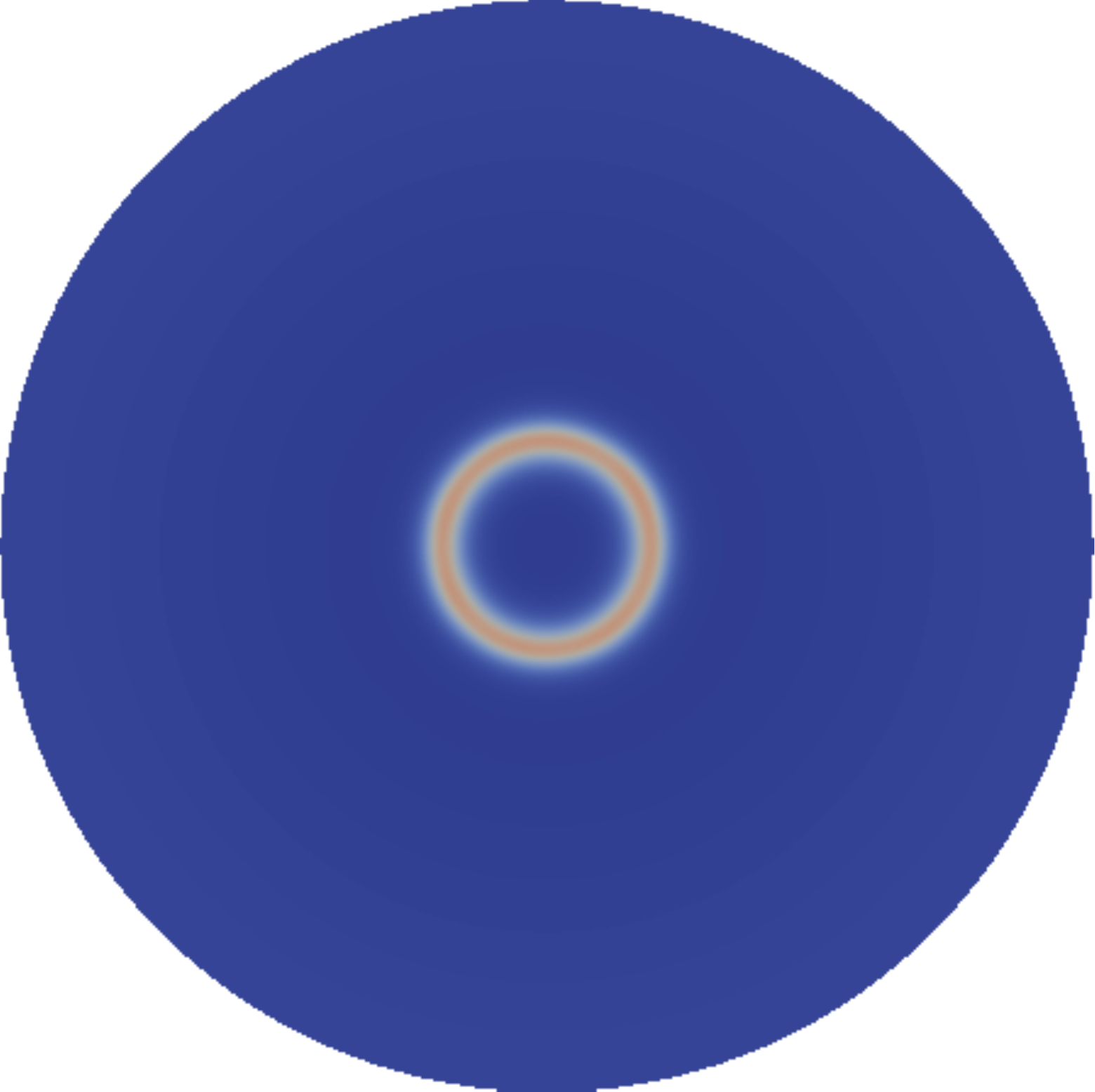}
  \includegraphics[width=0.4\linewidth]{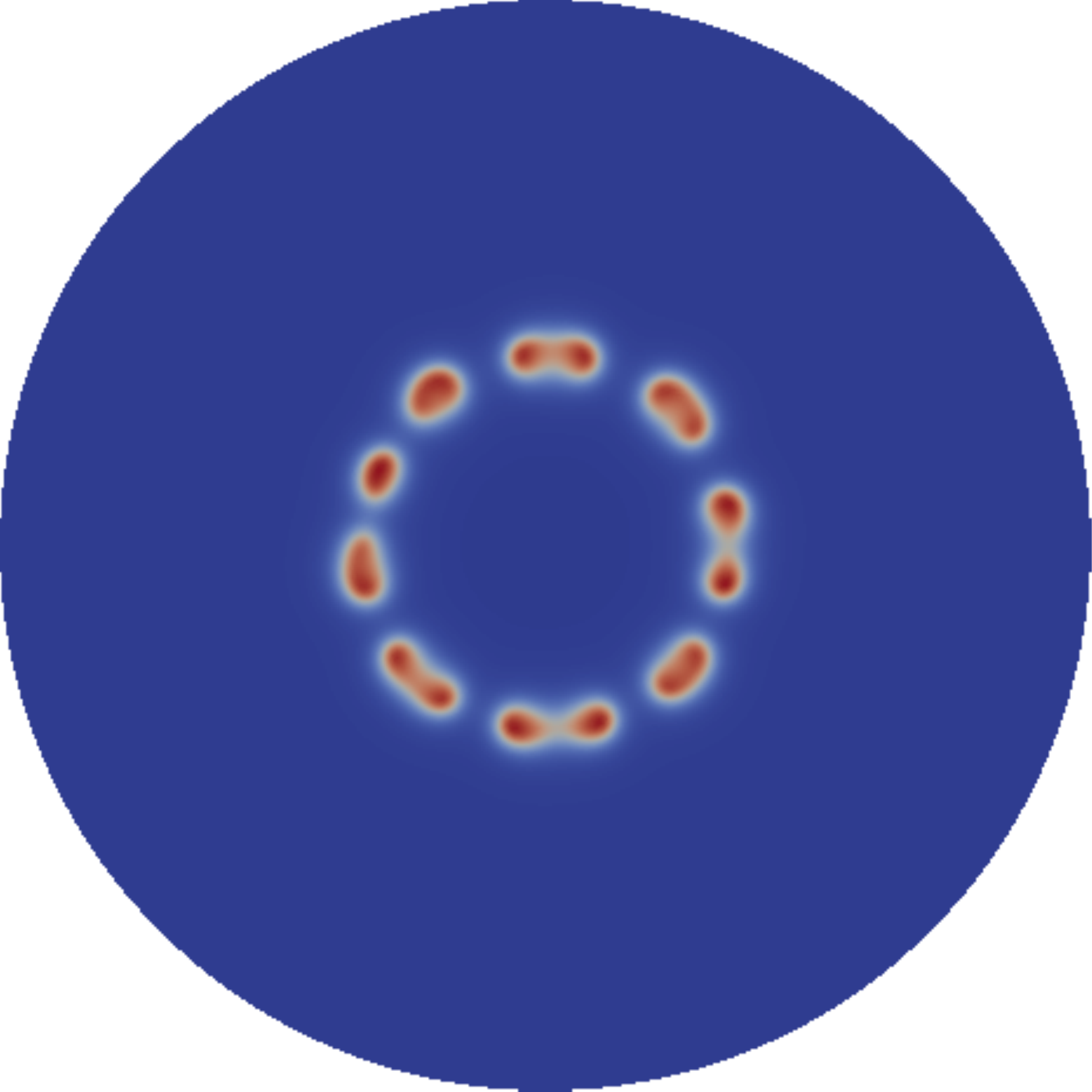}\hfil\includegraphics[width=0.4\linewidth]{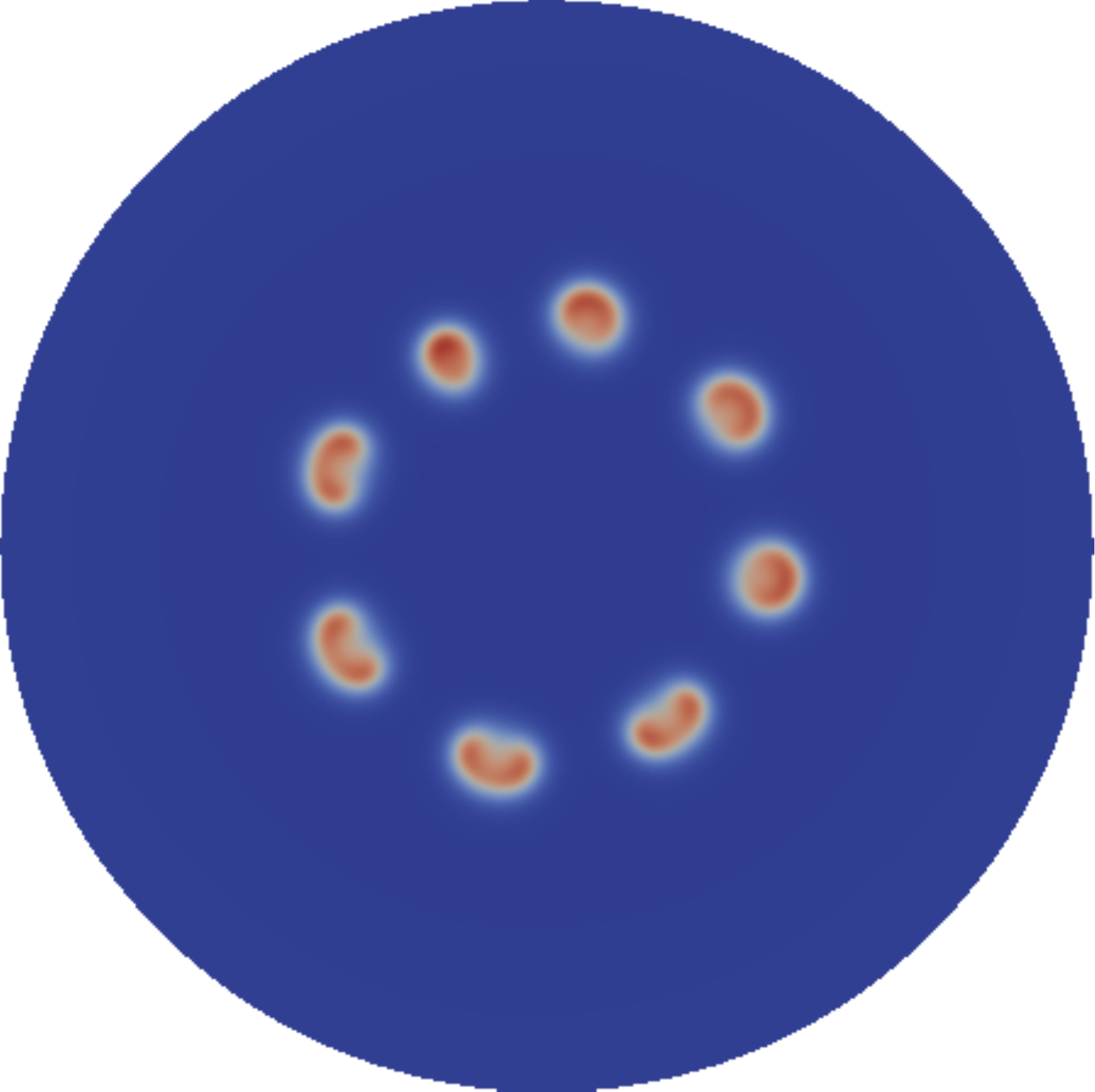}
  \includegraphics[width=0.4\linewidth]{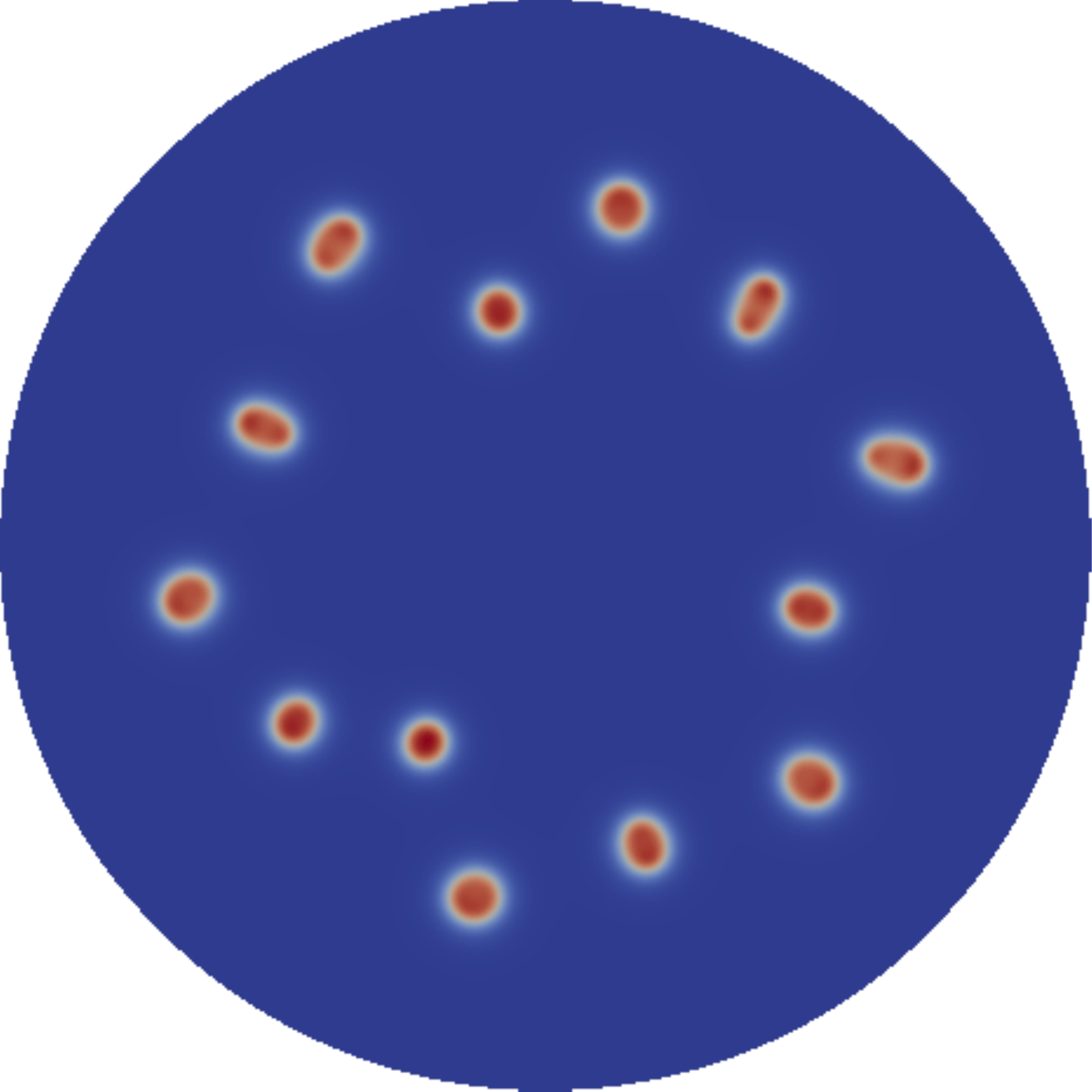}\hfil\includegraphics[width=0.4\linewidth]{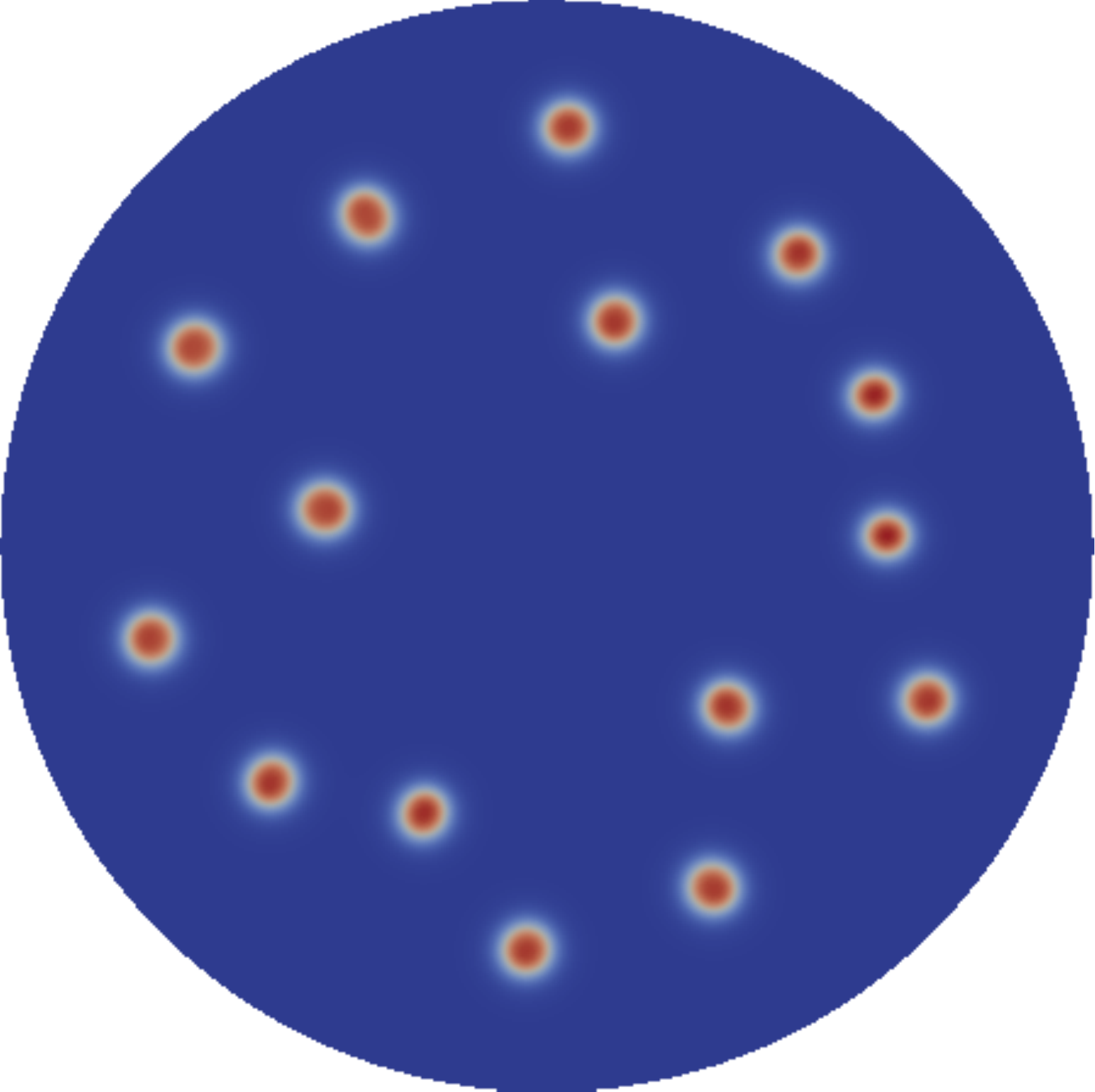}
  \caption{
    Localised spot solutions of the Brusselator system with \(\alpha=\beta=0.625\) (\emph{left}) and \(\alpha=\beta=0.75\) (\emph{right}).
    \(u\) is shown in both cases, and time progresses from top to bottom.
    The initial perturbation was identical in both cases.
    The initial perturbation in the centre of the domain forms a ring, whose radius is bigger if the fractional orders of diffusion \(\alpha\), \(\beta\) are smaller.
    The ring breaks up into several spots, which start to replicate and spread out over the whole domain.
    \(n\approx 50,000\) unknowns were used in the finite element approximation.
  }
  \label{fig:brusselator}
\end{figure}

Another choice of parameters leads to stripes in the solution.
For \(\alpha=\beta=0.75\), \(\eta=0.2\), \(B=6.26\) and \(Q=2.5\), and a random initial condition, stripes without directionality form in the whole domain.
This is in alignment with the theoretical considerations of Golovin, Matkowsky and Volpert \cite{GolovinMatkowskyEtAl2008_TuringPatternFormationBrusselator}.
\begin{figure}
  \centering
  \includegraphics[width=0.4\linewidth]{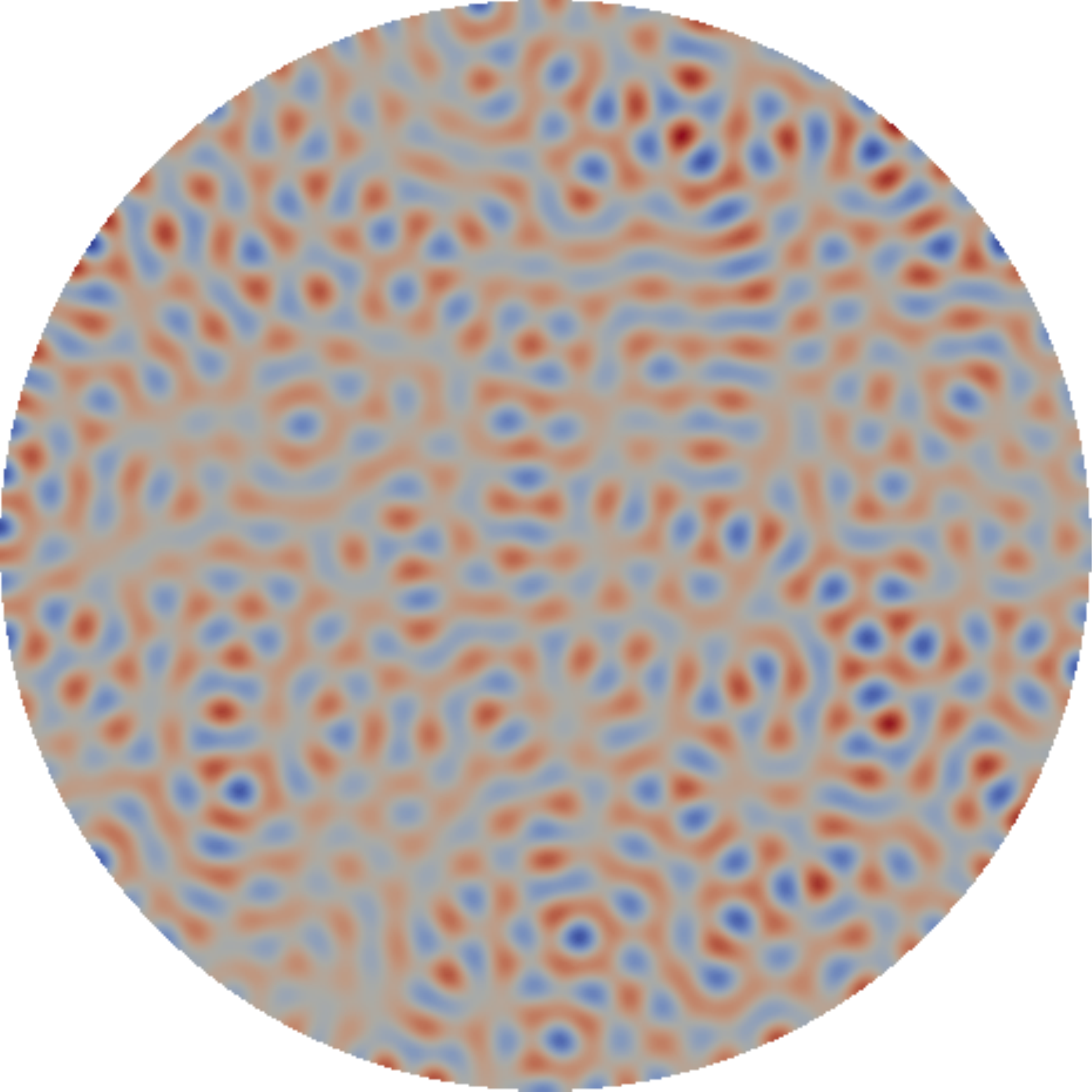}\hfil\includegraphics[width=0.4\linewidth]{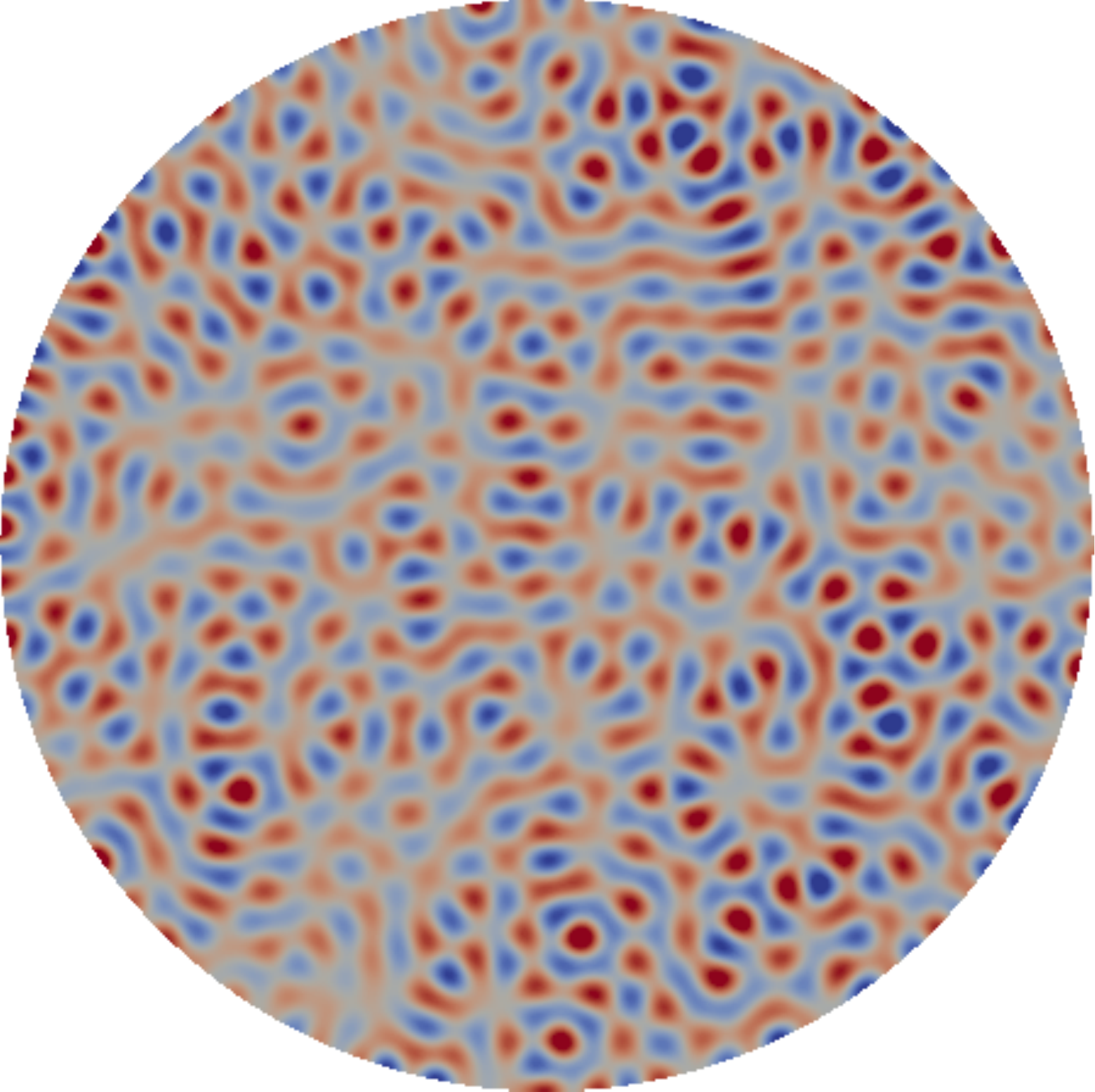}
  \caption{Stripe solutions of the Brusselator system with \(\alpha=\beta=0.75\).
    \(u\) is shown on the left, and \(v\) on the right.
    The random initial condition leads to the formation of stripes throughout the domain.
    \(n\approx 50,000\) unknowns were used in the finite element approximation.
    }
  \label{fig:brusselatorStripes}
\end{figure}

\section{Conclusion}
\label{sec:conclusion}

We have presented a reasonably complete and coherent approach for the efficient approximation of problems involving the fractional Laplacian, based on techniques from the boundary element literature.
In particular, we discussed the efficient assembly and solution of the associated matrix, and demonstrated the feasibility of a sparse approximation using the panel clustering method.
The potential of the approach was demonstrated in several numerical examples, and were used to reproduce some of the findings for a fractional Brusselator model.
While we focused on the case of \(d=2\) dimensions, the generalisation to higher dimensions does not pose any fundamental difficulties.
Moreover, the approach taken to obtain a sparse approximation to the dense system matrix for the fractional Laplacian does not rely strongly on the form of the interaction kernel \(k\left(\vec{x},\vec{y}\right)=\absAinsworthGlusa{\vec{x}-\vec{y}}^{-(d+2s)}\), and generalisations to different kernels such as the one used in peridynamics \cite{Silling2000_ReformulationElasticityTheoryDiscontinuities} are therefore possible.
In the present work we have confined ourselves to the discussion of quasi-uniform meshes.
However, solutions of problems involving the fractional Laplacian exhibit line singularities in the neighbourhood of the boundary.
The efficient resolution of such problems would require locally refined meshes which form the topic of forthcoming work \cite{AinsworthGlusa_AspectsAdaptiveFiniteElement}.

\appendix

\section*{Appendix A - Derivation of expressions for singular contributions}
\label{sec:deriv-expr-sing}

The contributions \(a^{K\times\tilde{K}}\) and  \(a^{K\times e}\) as given in \cref{eq:6,eq:7} for touching elements \(K\) and \(\tilde{K}\) contain removable singularities.
In order to make these contributions amenable to numerical quadrature, the singularities need to be lifted.
We outline the derivation for \(d=2\) dimensions.

The expression for \(a^{K\times \tilde{K}}\) can be transformed into integrals over the reference element \(\hat{K}\):
\begin{align*}
  &a^{K\times\tilde{K}}(\phi_{i},\phi_{j}) \\
  =& \frac{C(2,s)}{2} \int_{K} \; d \vec{x} \int_{\tilde{K}} \; d \vec{y} \frac{\left(\phi_{i}(\vec{x})-\phi_{i}(\vec{y})\right)\left(\phi_{j}(\vec{x})-\phi_{j}(\vec{y})\right)}{\absAinsworthGlusa{\vec{x}-\vec{y}}^{2+2s}} \\
  =& \frac{C(2,s)}{2}\frac{\absAinsworthGlusa{K}}{\absAinsworthGlusa{\hat{K}}} \frac{\absAinsworthGlusa{\tilde{K}}}{\absAinsworthGlusa{\hat{K}}} \int_{\hat{K}} \; d \hat{\vec{x}} \int_{\hat{K}} \; d \hat{\vec{y}}
    \frac{\left(\phi_{i}(\vec{x}\left(\hat{\vec{x}}\right))-\phi_{i}(\vec{y}\left(\hat{\vec{y}}\right))\right) \left(\phi_{j}(\vec{x}\left(\hat{\vec{x}}\right))-\phi_{j}(\vec{y}\left(\hat{\vec{y}}\right))\right)}{\absAinsworthGlusa{\vec{x}\left(\hat{\vec{x}}\right)-\vec{y}\left(\hat{\vec{y}}\right)}^{2+2s}}.
\end{align*}
Similarly, by introducing the reference edge \(\hat{e}\), we obtain
\begin{align*}
  a^{K\times e}(\phi_{i},\phi_{j})
  &= \frac{C(2,s)}{2s} \int_{K} \; d \vec{x} \int_{e} \; d \vec{y} \frac{\phi_{i}\left(\vec{x}\right) \phi_{j}\left(\vec{x}\right) ~ \vec{n}_{e}\cdot\left(\vec{x}-\vec{y}\right)}{\absAinsworthGlusa{\vec{x}-\vec{y}}^{2+2s}} \\
    &= \frac{C(2,s)}{2s} \frac{\absAinsworthGlusa{K}}{\absAinsworthGlusa{\hat{K}}} \frac{\absAinsworthGlusa{e}}{\absAinsworthGlusa{\hat{e}}} \int_{\hat{K}} \; d \hat{\vec{x}} \int_{\hat{e}} \; d \hat{\vec{y}} \frac{\phi_{i}\left(\vec{x}\left(\hat{\vec{x}}\right)\right) \phi_{j}\left(\vec{x}\left(\hat{\vec{x}}\right)\right) ~ \vec{n}_{e}\cdot\left(\vec{x}\left(\hat{\vec{x}}\right)-\vec{y}\left(\hat{\vec{y}}\right)\right)}{\absAinsworthGlusa{\vec{x}\left(\hat{\vec{x}}\right)-\vec{y}\left(\hat{\vec{y}}\right)}^{2+2s}}
\end{align*}
for touching elements \(K\) and edges \(e\).
If \(K\) and \(\tilde{K}\) or \(e\) have \(c\geq 1\) common vertices, and if we designate by \(\lambda_{k}\), \(k=0,\dots,6-c\) the barycentric coordinates of \(K\cup \tilde{K}\) or \(K\cup e\) respectively (cf. \Cref{fig:localNodes}), we have
\begin{align*}
  \lambda_{k(i)}(\hat{\vec{x}}) &= \phi_{i}\left(\vec{x}\left(\hat{\vec{x}}\right)\right),
\end{align*}
where \(k(i)\) is the local index on \(K\cup \tilde{K}\) or \(K\cup e\) of the global degree of freedom \(i\).
Moreover, we have that
\begin{align*}
  \vec{x}\left(\hat{\vec{x}}\right) - \vec{y}\left(\hat{\vec{y}}\right)
  &= \sum_{k=0}^{6-c} \lambda_{k}\left(\hat{\vec{x}}\right)\vec{x}_{k}-\sum_{k=0}^{6-c} \lambda_{k}\left(\hat{\vec{y}}\right)\vec{x}_{k} \\
  &= \sum_{k=0}^{6-c} \left[\lambda_{k}\left(\hat{\vec{x}}\right) - \lambda_{k}\left(\hat{\vec{y}}\right)\right]\vec{x}_{k}.
\end{align*}
Here, \(\vec{x}_{k}\), \(k=0,\dots,6-c\) are the vertices that span \(K\cup \tilde{K}\) or \(K\cup e\) respectively.

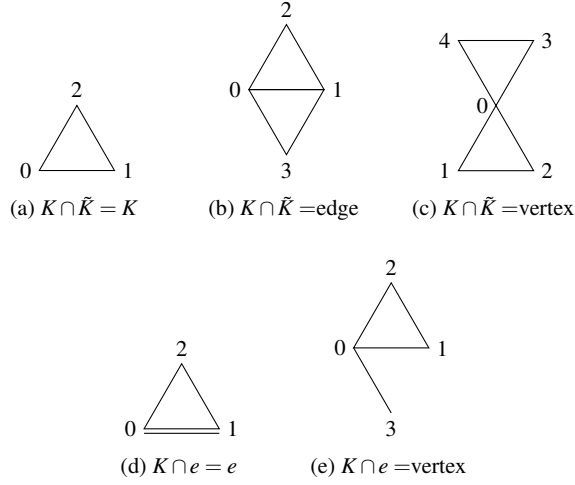
\begin{figure}
  \centering
  \subfloat[\(K\cap\tilde{K}=K\)]{
    \trimbox{-0.5cm 0cm -0.5cm 0cm}{%
      \begin{tikzpicture}
        \coordinate (node0) at (0,0);
        \coordinate (node1) at (1,0);
        \coordinate (node2) at (0.5,0.866);
        \draw (node0) -- (node1) -- (node2) -- cycle;
        \node[anchor=east] at (node0) {0};
        \node[anchor=west] at (node1) {1};
        \node[anchor=south] at (node2) {2};
      \end{tikzpicture}}}
  \subfloat[\(K\cap\tilde{K}=\)edge]{
    \trimbox{-0.5cm 0cm -0.5cm 0cm}{%
      \begin{tikzpicture}
        \coordinate (node0) at (0,0);
        \coordinate (node1) at (1,0);
        \coordinate (node2) at (0.5,0.866);
        \coordinate (node3) at (0.5,-0.866);
        \draw (node0) -- (node1) -- (node2) -- cycle;
        \draw (node0) -- (node1) -- (node3) -- cycle;
        \node[anchor=east] at (node0) {0};
        \node[anchor=west] at (node1) {1};
        \node[anchor=south] at (node2) {2};
        \node[anchor=north] at (node3) {3};
      \end{tikzpicture}}}
  \subfloat[\(K\cap\tilde{K}=\)vertex]{
    \trimbox{-0.5cm 0cm -0.5cm 0cm}{%
      \begin{tikzpicture}
        \coordinate (node1) at (0,0);
        \coordinate (node2) at (1,0);
        \coordinate (node0) at (0.5,0.866);
        \coordinate (node3) at (1,1.732);
        \coordinate (node4) at (0,1.732);
        \draw (node0) -- (node1) -- (node2) -- cycle;
        \draw (node0) -- (node3) -- (node4) -- cycle;
        \node[anchor=east] at (node0) {0};
        \node[anchor=east] at (node1) {1};
        \node[anchor=west] at (node2) {2};
        \node[anchor=west] at (node3) {3};
        \node[anchor=east] at (node4) {4};
      \end{tikzpicture}}}

  \subfloat[\(K\cap e=e\)]{
    \trimbox{-0.5cm 0cm -0.5cm 0cm}{%
      \begin{tikzpicture}
        \coordinate (node0) at (0,0);
        \coordinate (node1) at (1,0);
        \coordinate (node2) at (0.5,0.866);
        \draw (node0) -- (node1) -- (node2) -- cycle;
        \draw[transform canvas={yshift=-0.2em}] (node0) -- (node1);
        \node[anchor=east] at (node0) {0};
        \node[anchor=west] at (node1) {1};
        \node[anchor=south] at (node2) {2};
      \end{tikzpicture}}}
  \subfloat[\(K\cap e=\)vertex]{
    \trimbox{-0.5cm 0cm -0.5cm 0cm}{%
      \begin{tikzpicture}
        \coordinate (node0) at (0,0);
        \coordinate (node1) at (1,0);
        \coordinate (node2) at (0.5,0.866);
        \coordinate (node3) at (0.5,-0.866);
        \draw (node0) -- (node1) -- (node2) -- cycle;
        \draw (node0) -- (node3);
        \node[anchor=east] at (node0) {0};
        \node[anchor=west] at (node1) {1};
        \node[anchor=south] at (node2) {2};
        \node[anchor=north] at (node3) {3};
      \end{tikzpicture}}}
  \caption{Numbering of local nodes for touching triangular elements \(K\) and \(\tilde{K}\) or element \(K\) and edge \(e\).}\label{fig:localNodes}
\end{figure}

By setting
\begin{align*}
  \psi_{k}\left(\hat{\vec{x}},\hat{\vec{y}}\right) := \lambda_{k}\left(\hat{\vec{x}}\right) - \lambda_{k}\left(\hat{\vec{y}}\right),
\end{align*}
we can therefore write
\begin{align*}
  a^{K\times\tilde{K}}(\phi_{i},\phi_{j})
  &= \frac{C(2,s)}{2}\frac{\absAinsworthGlusa{K}}{\absAinsworthGlusa{\hat{K}}} \frac{\absAinsworthGlusa{\tilde{K}}}{\absAinsworthGlusa{\hat{K}}} \int_{\hat{K}} \; d \hat{\vec{x}} \int_{\hat{K}} \; d \hat{\vec{y}}  \frac{\psi_{k(i)}\left(\hat{\vec{x}},\hat{\vec{y}}\right) \psi_{k(j)}\left(\hat{\vec{x}},\hat{\vec{y}}\right)}{\absAinsworthGlusa{\sum_{k=0}^{6-c}\psi_{k}\left(\hat{\vec{x}},\hat{\vec{y}}\right) \vec{x}_{k}}^{2+2s}}.
\end{align*}
By carefully splitting the integration domain \(\hat{K}\times\hat{K}\) into \(L_{c}\) parts and applying a Duffy transformation to each part, the contributions can be rewritten into integrals over a unit hyper-cube, where the singularities are lifted.
\begin{align}
  a^{K\times\tilde{K}}(\phi_{i},\phi_{j})
  &= \frac{C(2,s)}{2} \frac{\absAinsworthGlusa{K}}{\absAinsworthGlusa{\hat{K}}} \frac{\absAinsworthGlusa{\tilde{K}}}{\absAinsworthGlusa{\hat{K}}} \sum_{\ell=1}^{L_{c}} \int_{[0,1]^{4}} \; d \vec{\eta} ~ \Bar J^{(\ell,c)}\frac{\Bar \psi_{k(i)}^{(\ell,c)}\left(\vec{\eta}\right) \Bar \psi_{k(j)}^{(\ell,c)}\left(\vec{\eta}\right)}{\absAinsworthGlusa{\sum_{k=0}^{2d-c}\Bar\psi_{k}^{(\ell,c)}\left(\vec{\eta}\right) \vec{x}_{k}}^{2+2s}}. \label{eq:1}
\end{align}
The details of this approach can be found in Chapter 5 of \cite{SauterSchwab2010_BoundaryElementMethods} for the interactions between \(K\) and \(\tilde{K}\).
We record the obtained expressions in this case.
\begin{itemize}
\item \(K\) and \(\tilde{K}\) are identical, i.e. \(c=3\)
  \begin{align*}
     L_{3}=3,&& \Bar J^{(1,3)}=\Bar J^{(2,3)}=\Bar J^{(3,3)}= \eta_{0}^{3-2s}\eta_{1}^{2-2s}\eta_{2}^{1-2s},
  \end{align*}

  \begin{align*}
    \Bar\psi_{k}^{(1,3)}
    &= \begin{cases}
      -\eta_{3} \\
      \eta_{3}-1 \\
      1
    \end{cases}
    & \Bar\psi_{k}^{(2,3)}
    &= \begin{cases}
      -1 \\
      1-\eta_{3} \\
      \eta_{3}
    \end{cases}
    & \Bar\psi_{k}^{(3,3)}
    &= \begin{cases}
      \eta_{3} \\
      -1 \\
      1-\eta_{3}
    \end{cases}
  \end{align*}

\item \(K\) and \(\tilde{K}\) share an edge, i.e. \(c=2\)
  \begin{align*}
    L_{2}=5,
    && \Bar J^{(1,2)}=\eta_{0}^{3-2s}\eta_{1}^{2-2s}, \\
    && \Bar J^{(2,2)}=\Bar J^{(3,2)}=\Bar J^{(4,2)}=\Bar J^{(5,2)} = \eta_{0}^{3-2s}\eta_{1}^{2-2s}\eta_{2}
  \end{align*}

  \begin{align*}
    \Bar\psi_{k}^{(1,2)}
    &= \begin{cases}
      -\eta_{2} \\
      1-\eta_{3} \\
      \eta_{3} \\
      \eta_{2}-1
    \end{cases}
    & \Bar\psi_{k}^{(2,2)}
    &= \begin{cases}
      -\eta_{2}\eta_{3} \\
      \eta_{2}-1 \\
      1 \\
      \eta_{2}\eta_{3}-\eta_{2}
    \end{cases}
    & \Bar\psi_{k}^{(3,2)}
    &= \begin{cases}
      \eta_{2} \\
      \eta_{2}\eta_{3}-1 \\
      1-\eta_{2} \\
      -\eta_{2}\eta_{3}
    \end{cases} \\
    \Bar\psi_{k}^{(4,2)}
    &= \begin{cases}
      \eta_{2} \eta_{3} \\
      1-\eta_{2} \\
      \eta_{2}-\eta_{2} \eta_{3} \\
      -1
    \end{cases}
    & \Bar\psi_{k}^{(5,2)}
    &= \begin{cases}
      \eta_{2}\eta_{3} \\
      \eta_{2}-1 \\
      1-\eta_{2}\eta_{3} \\
      -\eta_{2}
    \end{cases}
  \end{align*}
\item \(K\) and \(\tilde{K}\) share a vertex, i.e. \(c=1\)
  \begin{align*}
    L_{1}=2,
    && \Bar J^{(1,1)}= \Bar J^{(2,1)}= \eta_{0}^{3-2s}\eta_{2}
  \end{align*}

  \begin{align*}
    \Bar\psi_{k}^{(1,1)}
    &= \begin{cases}
      \eta_{2}-1 \\
      1-\eta_{1} \\
      \eta_{1} \\
      \eta_{2}\eta_{3}-\eta_{2} \\
      -\eta_{2}\eta_{3}
    \end{cases}
    & \Bar\psi_{k}^{(2,1)}
    &= \begin{cases}
      1-\eta_{2} \\
      \eta_{2}-\eta_{2} \eta_{3} \\
      \eta_{2}\eta_{3} \\
      \eta_{1}-1 \\
      -\eta_{1}
    \end{cases}
  \end{align*}

\end{itemize}
We notice that the contributions for identical elements only depend on \(\eta_{3}\), so that in fact only one-dimensional integrals need to be computed.
Similarly, the cases of common edges or common vertices only require two and three dimensional integration.

In a similar fashion, the integration domain of \(a^{K\times e}\) can be split into several parts, so that the singularity can be lifted:
\begin{align*}
  a^{K\times e}(\phi_{i},\phi_{j})
  &= \frac{C(2,s)}{2s} \frac{\absAinsworthGlusa{K}}{\absAinsworthGlusa{\hat{K}}} \frac{\absAinsworthGlusa{e}}{\absAinsworthGlusa{\hat{e}}} \int_{[0,1]^{3}} \; d \vec{\eta} \Bar J^{(\ell,c)}
    \frac{\phi_{k(i)}^{(\ell,c)}\left(\vec{\eta}\right) \phi_{k(j)}^{(\ell,c)}\left(\vec{\eta}\right) ~ \sum_{k=0}^{5-c}\Bar\psi_{k}^{(\ell,c)}\left(\vec{\eta}\right) \vec{n}_{e}\cdot\vec{x}_{k}}{\absAinsworthGlusa{\sum_{k=0}^{5-c}\Bar\psi_{k}^{(\ell,c)}\left(\vec{\eta}\right) \vec{x}_{k}}^{2+2s}}.
\end{align*}
Here, \(\phi_{k}^{(\ell,c)}\) are the expressions for the local shape functions under the Duffy transformations.
The obtained expressions are
\begin{itemize}
\item \(e\) is an edge of \(K\), i.e. \(c=2\)
  \begin{align*}
    L_{2}=3,
    && \Bar J^{(1,2)}=\Bar J^{(2,2)}=\Bar J^{(3,2)}=\eta_{0}^{-2s}\left(1-\eta_{0}\right),
  \end{align*}

  \begin{align*}
    \phi_{k}^{(1,2)}
    &= \begin{cases}
      1-\eta_{0}-\eta_{2}+\eta_{0}\eta_{2} \\
      \eta_{0}+\eta_{2}-\eta_{0}\eta_{1}-\eta_{0}\eta_{1} \\
      \eta_{0}\eta_{1}
    \end{cases}
    & \phi_{k}^{(2,2)}
    &= \begin{cases}
      1-\eta_{0}-\eta_{2}+\eta_{0}\eta_{2} \\
      \eta_{2}-\eta_{0}\eta_{2} \\
      \eta_{0}
    \end{cases}\\
     \phi_{k}^{(3,2)}
    &= \begin{cases}
      1-\eta_{2}+\eta_{0}\eta_{2}-\eta_{0}\eta_{1} \\
      \eta_{2}-\eta_{0}\eta_{2} \\
      \eta_{0}\eta_{1}
    \end{cases}
  \end{align*}
  \begin{align*}
    \Bar\psi_{k}^{(1,2)}
    &= \begin{cases}
      -1 \\
      1-\eta_{1} \\
      \eta_{1}
    \end{cases}
    & \Bar\psi_{k}^{(2,2)}
    &= \begin{cases}
      -\eta_{1} \\
      \eta_{1}-1 \\
      1
    \end{cases}
    & \Bar\psi_{k}^{(3,2)}
    &= \begin{cases}
      1-\eta_{1} \\
      -1 \\
      \eta_{1}
    \end{cases}
  \end{align*}
  We notice that for \(s\geq1/2\), the integrand still contains a singularity.
  In this case, the finite element space \(V_{h}\) does not include the degrees of freedom on the boundary.
  For the interaction of the single degree of freedom that is not on the boundary (\(k=2\)), we obtain
  \begin{align*}
    \Bar J^{(1,2)}=\Bar J^{(2,2)}=\Bar J^{(3,2)}=\eta_{0}^{2-2s}\left(1-\eta_{0}\right),
  \end{align*}
  \begin{align*}
    \phi_{2}^{(1,2)}
    &= \eta_{1}
    & \phi_{2}^{(2,2)}
    &= 1
    & \phi_{2}^{(3,2)}
    &= \eta_{1}
  \end{align*}
  and \(\Bar\psi_{2}^{\ell,c}\) as above.
\item \(K\) and \(e\) share a vertex, i.e. \(c=1\)
  \begin{align*}
    L_{1}=2,
    && \Bar J^{(1,1)} = \eta_{0}^{1-2s}, \Bar J^{(2,1)}= \eta_{0}^{1-2s}\eta_{1}
  \end{align*}

  \begin{align*}
    \Bar\psi_{k}^{(1,1)}
    &= \begin{cases}
      \eta_{2}-1 \\
      1-\eta_{1} \\
      \eta_{1} \\
      -\eta_{2}
    \end{cases}
    & \Bar\psi_{k}^{(2,1)}
    &= \begin{cases}
      1-\eta_{1} \\
      \eta_{1}-\eta_{1} \eta_{2} \\
      \eta_{1}\eta_{2} \\
      -1
    \end{cases}
  \end{align*}
\end{itemize}

\section*{Appendix B - Proof of Consistency Error due to Quadrature}
\label{sec:proof-of-theorem}

Next, we give the proof for the consistency error of the quadrature approximation first stated in \Cref{sec:quadrature-errors}.
\consistencyErrorQuadrature*
\begin{proof}~
  Let the quadrature rules for the pairs \(K\times\tilde{K}\) and \(K\times e\) be denoted by \(a^{K\times\tilde{K}}_{Q}\left(\cdot,\cdot\right)\) and \(a^{K\times e}_{Q}\left(\cdot,\cdot\right)\).
  Set
  \begin{align*}
    E_{K\times\tilde{K}}^{i,j} &= a^{K\times\tilde{K}}\left(\phi_{i},\phi_{j}\right)-a_{Q}^{K\times\tilde{K}}\left(\phi_{i},\phi_{j}\right),\\
    E_{K\times e}^{i,j} &= a^{K\times e}\left(\phi_{i},\phi_{j}\right)-a_{Q}^{K\times e}\left(\phi_{i},\phi_{j}\right).
  \end{align*}
  For \(u,v\in V_{h}\), we set
  \begin{align*}
    E_{K\times\tilde{K}}(u,v)&= \sum_{i\in\mathcal{I}_{K\times\tilde{K}}} \sum_{j\in\mathcal{I}_{K\times\tilde{K}}} u_{i}v_{j}E_{K\times\tilde{K}}^{i,j}, \\
    E_{K\times e}(u,v)&= \sum_{i\in\mathcal{I}_{K}} \sum_{j\in\mathcal{I}_{K}} u_{i}v_{j}E_{K\times e}^{i,j}
  \end{align*}
  so that
  \begin{align*}
    \absAinsworthGlusa{E_{K\times\tilde{K}}(u,v)}
    &\leq \left(\max_{i,j}\absAinsworthGlusa{E_{K\times\tilde{K}}^{i,j}}\right) \sum_{i\in\mathcal{I}_{K\times\tilde{K}}} \absAinsworthGlusa{u_{i}} \sum_{j\in\mathcal{I}_{K\times\tilde{K}}} \absAinsworthGlusa{v_{j}} \\
    &\leq \left(\max_{i,j}\absAinsworthGlusa{E_{K\times\tilde{K}}^{i,j}}\right) \absAinsworthGlusa{\mathcal{I}_{K\times\tilde{K}}} \sqrt{\sum_{i\in\mathcal{I}_{K\times\tilde{K}}} \absAinsworthGlusa{u_{i}}^{2}} \sqrt{\sum_{j\in\mathcal{I}_{K\times\tilde{K}}} \absAinsworthGlusa{v_{j}}^{2}}, \\
    \absAinsworthGlusa{E_{K\times e}(u,v)}
    &\leq \left(\max_{i,j}\absAinsworthGlusa{E_{K, e}^{i,j}}\right) \sum_{i\in\mathcal{I}_{K}} \absAinsworthGlusa{u_{i}} \sum_{j\in\mathcal{I}_{K}} \absAinsworthGlusa{v_{j}} \\
    &\leq \left(\max_{i,j}\absAinsworthGlusa{E_{K, e}^{i,j}}\right) \absAinsworthGlusa{\mathcal{I}_{K}} \sqrt{\sum_{i\in\mathcal{I}_{K}} \absAinsworthGlusa{u_{i}}^{2}} \sqrt{\sum_{j\in\mathcal{I}_{K}} \absAinsworthGlusa{v_{j}}^{2}}
  \end{align*}
  Since
  \begin{align*}
    \sum_{i\in\mathcal{I}_{K\times\tilde{K}}} \absAinsworthGlusa{u_{i}}^{2}
    &\leq C\left[h_{K}^{-d}\int_{K}u^{2} + h_{\tilde{K}}^{-d}\int_{\tilde{K}}u^{2}\right], \\
    \sum_{i\in\mathcal{I}_{K}} \absAinsworthGlusa{u_{i}}^{2}
    &\leq C h_{K}^{-d}\int_{K}u^{2},
  \end{align*}
  we find
  \begin{align*}
    \absAinsworthGlusa{a(u,v)-a_{Q}(u,v)}
    &\leq \sum_{K}\sum_{\tilde{K}} \absAinsworthGlusa{E_{K\times\tilde{K}}(u,v)} + \sum_{K}\sum_{e} \absAinsworthGlusa{E_{K\times e}(u,v)} \\
    &\leq C\sum_{K}\sum_{\tilde{K}} \left(\max_{i,j}\absAinsworthGlusa{E_{K\times\tilde{K}}^{i,j}}\right) h^{-d} \left[\normAinsworthGlusa{u}_{L^{2}(K)}^{2} + \normAinsworthGlusa{u}_{L^{2}(\tilde{K})}^{2}\right]^{1/2} \\
    &\qquad\qquad \left[ \normAinsworthGlusa{v}_{L^{2}(K)}^{2} + \normAinsworthGlusa{v}_{L^{2}(\tilde{K})}^{2}\right]^{1/2} \\
    &\quad + C\sum_{K}\sum_{e} \left(\max_{i,j}\absAinsworthGlusa{E_{K\times e}^{i,j}}\right)  h^{-d}\normAinsworthGlusa{u}_{L^{2}(K)} \normAinsworthGlusa{v}_{L^{2}(K)} \\
    &\leq C h^{-d} \left(\max_{K,\tilde{K}}\max_{i,j}\absAinsworthGlusa{E_{K\times\tilde{K}}^{i,j}}\right) \sum_{K}\sum_{\tilde{K}} \normAinsworthGlusa{u}_{L^{2}(K\cup\tilde{K})} \normAinsworthGlusa{v}_{L^{2}(K\cup\tilde{K})} \\
    &\quad + C h^{-d} \left(\max_{K,e}\max_{i,j}\absAinsworthGlusa{E_{K\times e}^{i,j}}\right) \sum_{K}\sum_{e} \normAinsworthGlusa{u}_{L^{2}(K)} \normAinsworthGlusa{v}_{L^{2}(K)}.
  \end{align*}
  Because
  \begin{align*}
    \sum_{K}\sum_{\tilde{K}} \normAinsworthGlusa{u}_{L^{2}(K\cup\tilde{K})} \normAinsworthGlusa{v}_{L^{2}(K\cup\tilde{K})}
    &\leq \sqrt{\sum_{K}\sum_{\tilde{K}} \normAinsworthGlusa{u}_{L^{2}(K\cup\tilde{K})}^{2}} \sqrt{\sum_{K}\sum_{\tilde{K}} \normAinsworthGlusa{v}_{L^{2}(K\cup\tilde{K})}^{2}} \\
    &\leq 2\absAinsworthGlusa{\mathcal{P}_{h}} \normAinsworthGlusa{u}_{L^{2}(\Omega)} \normAinsworthGlusa{v}_{L^{2}(\Omega)}\\
    &\leq Ch^{-d} \normAinsworthGlusa{u}_{L^{2}(\Omega)} \normAinsworthGlusa{v}_{L^{2}(\Omega)}
      \intertext{and}
      \sum_{K}\sum_{e} \normAinsworthGlusa{u}_{L^{2}(K)} \normAinsworthGlusa{v}_{L^{2}(K)}
    &\leq \absAinsworthGlusa{\mathcal{P}_{h,\partial}} \normAinsworthGlusa{u}_{L^{2}(\Omega)} \normAinsworthGlusa{v}_{L^{2}(\Omega)}\\
    &\leq Ch^{1-d} \normAinsworthGlusa{u}_{L^{2}(\Omega)} \normAinsworthGlusa{v}_{L^{2}(\Omega)},
  \end{align*}
  we obtain
  \begin{align*}
    \absAinsworthGlusa{a(u,v)-a_{Q}(u,v)}
    &\leq C \left[h^{-2d} \left(\max_{K,\tilde{K}}\max_{i,j}\absAinsworthGlusa{E_{K\times\tilde{K}}^{i,j}}\right) \right.\\
    &\qquad\left.+ h^{1-2d}\left(\max_{K,e}\max_{i,j}\absAinsworthGlusa{E_{K\times e}^{i,j}}\right)\right] \normAinsworthGlusa{u}_{L^{2}(\Omega)} \normAinsworthGlusa{v}_{L^{2}(\Omega)}.
  \end{align*}
  For \(d=2\), using \Cref{thm:localError} stated below permits to conclude.
\end{proof}

\begin{theorem}[\cite{SauterSchwab2010_BoundaryElementMethods}, Theorems 5.3.23 and 5.3.24]\label{thm:localError}
  If \(K\) and \(\tilde{K}\) (\(K\) and \(e\)) are touching elements, then
  \begin{align*}
    \absAinsworthGlusa{E_{K\times\tilde{K}}^{i,j}}&\leq Ch^{2-2s}\rho_{1}^{-2k_{T}},\\
    \absAinsworthGlusa{E_{K\times e}^{i,j}} &\leq Ch^{2-2s}\rho_{3}^{-2k_{T,\partial}},
  \end{align*}
  where \(\rho_{1},\rho_{3}>1\) and \(k_{T}\), \(k_{T,\partial}\) are the quadrature orders in every dimension of \cref{eq:8,eq:9}.

  If \(K\) and \(\tilde{K}\) (\(K\) and \(e\)) are not touching, then
  \begin{flalign*}
    \absAinsworthGlusa{E_{K\times\tilde{K}}^{i,j}} &\leq C h^{2} d_{K,\tilde{K}}^{-2s}\tilde{\rho}_{2}\left(K,\tilde{K}\right)^{-2k_{NT}},\\
    \absAinsworthGlusa{E_{K\times e}^{i,j}} &\leq C h^{2} d_{K,e}^{-2s}\tilde{\rho}_{4}\left(K,e\right)^{-2k_{NT.\partial}},
  \end{flalign*}
  where \(d_{K,\tilde{K}}:=dist(K,\tilde{K})\), \(d_{K,e}:=dist(K,e)\), \(\tilde{\rho}_{2}(K,\tilde{K}):=\rho_{2}\max\left\{\frac{d_{K,\tilde{K}}}{h},1\right\}\), \(\tilde{\rho}_{4}(K,\tilde{K}):=\rho_{4}\max\left\{\frac{d_{K,e}}{h},1\right\}\) and \(\rho_{2},\rho_{4}>1\), and \(k_{NT}\), \(k_{NT,\partial}\) are the quadrature order in every dimension of \cref{eq:6,eq:7}.
\end{theorem}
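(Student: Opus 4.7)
The plan is to treat each stated bound as the product of two factors: the size of the entry itself, and the relative error of tensorised one-dimensional Gauss quadrature applied to an analytic integrand. Thanks to the Duffy and affine reductions recorded in Appendix~A, every quantity \(E^{i,j}_{K\times\tilde K}\) and \(E^{i,j}_{K\times e}\) is an integral of an explicit function on a reference cube \([0,1]^{4}\) or \([0,1]^{3}\), so the classical exponential-convergence theorem (Davis-Rabinowitz) for Gauss quadrature of analytic integrands applies: the relative error decays as \(\rho^{-2k}\) where \(\rho>1\) is the semi-axis of a Bernstein ellipse into which the integrand extends analytically. The \(h\)-dependent prefactors then come entirely from scaling analysis of the entries.

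For non-touching pairs (Step~1), the integrand \((\phi_{i}(\vec x)-\phi_{i}(\vec y))(\phi_{j}(\vec x)-\phi_{j}(\vec y))/|\vec x-\vec y|^{2+2s}\) is real-analytic on \(K\times\tilde K\), and after pulling back to \(\hat K\times\hat K\) the only obstruction to complex continuation is the collision \(\vec x(\hat{\vec x})=\vec y(\hat{\vec y})\); this collision set lies at distance proportional to \(d_{K,\tilde K}/h\) from the reference cube, yielding an admissible Bernstein parameter of order \(\tilde\rho_{2}(K,\tilde K)\). The size of the entry is obtained from the Lipschitz bound \(|\phi_{i}(\vec x)-\phi_{i}(\vec y)|\le Ch^{-1}|\vec x-\vec y|\), which dominates the integrand by \(Ch^{-2}|\vec x-\vec y|^{-2s}\); integrating over \(K\times\tilde K\) gives \(|K||\tilde K|d_{K,\tilde K}^{-2s}h^{-2}=Ch^{2}d_{K,\tilde K}^{-2s}\). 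The analogous computation with \(|e|\sim h\) in place of \(|\tilde K|\sim h^{2}\), together with \(\vec n_{e}\cdot(\vec x-\vec y)\le|\vec x-\vec y|\), yields the corresponding bound for \(E^{i,j}_{K\times e}\).

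For touching pairs (Step~2), the singularity of the kernel is genuine, and the purpose of the Duffy transformation is to absorb it into the Jacobian. A direct substitution on each sub-simplex shows that the transformed integrand takes the form \(\bar J^{(\ell,c)}\,P(\vec\eta)/R(\vec\eta)^{2+2s}\), where \(P\) is a polynomial vanishing to the correct order and \(R\) is a polynomial that is strictly positive on \([0,1]^{4}\) (respectively \([0,1]^{3}\)); the powers of \(\eta_{0},\eta_{1},\eta_{2}\) in \(\bar J^{(\ell,c)}\) precisely cancel the remaining singular behaviour. Consequently the integrand is real-analytic with an analyticity radius \(\rho_{1}>1\) (resp.\ \(\rho_{3}>1\)) independent of \(h\), and tensorised Gauss-Legendre quadrature converges at the rate \(\rho_{1}^{-2k_{T}}\) (resp.\ \(\rho_{3}^{-2k_{T,\partial}}\)). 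The \(h^{2-2s}\) prefactor is again the size of the entry: using the same Lipschitz bound and computing \(\int_{K}\int_{\tilde K}|\vec x-\vec y|^{-2s}\,d\vec x\,d\vec y=O(h^{4-2s})\) produces the claim. For \(E^{i,j}_{K\times e}\) with \(s\ge 1/2\) the naive Duffy Jacobian \(\eta_{0}^{-2s}(1-\eta_{0})\) fails to be analytic at \(\eta_{0}=0\), but in that regime the boundary degrees of freedom are absent from \(V_{h}\), so only the interior shape function survives and the alternative representation from Appendix~A (with Jacobian \(\eta_{0}^{2-2s}(1-\eta_{0})\)) restores analyticity.

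The principal obstacle will be the analyticity verification for the Duffy-transformed integrands in Step~2. In the classical boundary element setting of Sauter-Schwab's Theorems~5.3.23-5.3.24 the kernel exponent is an integer and the cancellation is a routine algebraic check; here one must track, case by case over the \(L_{c}\) sub-simplices, that the map \(\vec\eta\mapsto R(\vec\eta)^{-(2+2s)}\) extends to a complex neighbourhood of \([0,1]^{4}\) uniformly in \(s\in(0,1)\). This reduces to showing that \(R\) (the quantity \(|\sum_{k}\bar\psi_{k}^{(\ell,c)}(\vec\eta)\vec x_{k}|^{2}\)) has no complex zeros in such a neighbourhood, which is a finite computation given the explicit formulas for \(\bar\psi_{k}^{(\ell,c)}\) in Appendix~A. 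Once the uniform Bernstein radius is established the remainder of the proof is pure bookkeeping and the four bounds follow.
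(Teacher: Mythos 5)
The paper does not actually prove this statement --- it is imported from Sauter--Schwab (Theorems 5.3.23 and 5.3.24) and used as a black box in Appendix~B --- so your proposal has to stand on its own. Your non-touching case is essentially sound: after pulling back to the reference configuration the denominator \(\absAinsworthGlusa{\vec{x}(\hat{\vec{x}})-\vec{y}(\hat{\vec{y}})}\) stays bounded below by \(c\,d_{K,\tilde{K}}\) on a complex neighbourhood whose Bernstein parameter grows like \(d_{K,\tilde{K}}/h\), and the supremum of the pulled-back integrand (affine numerator, Jacobian \(\sim h^{4}\), resp.\ \(h^{3}\)) over that neighbourhood scales like \(h^{4}d_{K,\tilde{K}}^{-2-2s}\le h^{2}d_{K,\tilde{K}}^{-2s}\), which with the classical Gauss error bound gives the stated estimate. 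One caveat even there: the Gauss estimate bounds the error by \(\rho^{-2k}\) times the supremum of the analytic continuation over the ellipse, not by a ``relative error'' times the size of the entry, so the \(h\)- and \(d_{K,\tilde{K}}\)-scalings must be read off from that supremum; your splitting conflates the two, although the bookkeeping happens to go through.

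The touching case, however, contains a genuine gap. The Duffy transformation removes the singularity of \(\absAinsworthGlusa{\vec{x}-\vec{y}}^{-2-2s}\) in the sense that \(R(\vec{\eta})=\absAinsworthGlusa{\sum_{k}\bar{\psi}_{k}^{(\ell,c)}(\vec{\eta})\vec{x}_{k}}\ge ch\) by shape regularity, and that factor does extend analytically to a fixed complex neighbourhood of the cube. But the integrands in \cref{eq:8,eq:9} also carry the Jacobian factors \(\bar{J}^{(\ell,c)}\), e.g.\ \(\eta_{0}^{3-2s}\eta_{1}^{2-2s}\eta_{2}^{1-2s}\), \(\eta_{0}^{-2s}(1-\eta_{0})\), \(\eta_{0}^{1-2s}\), and for fractional \(s\) (i.e.\ unless \(s=1/2\)) these have branch points on the faces \(\eta_{i}=0\) of the cube. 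They are therefore not analytic in any Bernstein ellipse in those variables, plain tensorised Gauss--Legendre converges only algebraically in those directions, and your claim that the \(\eta\)-powers ``precisely cancel the remaining singular behaviour'' so that the whole integrand is analytic with radius \(\rho_{1}>1\) is false; your proposed verification (no complex zeros of \(R\) near the cube) inspects the wrong factor. To recover the exponential factors \(\rho_{1}^{-2k_{T}}\), \(\rho_{3}^{-2k_{T,\partial}}\) one must absorb the fractional powers into the quadrature itself --- Gauss--Jacobi rules in \(\eta_{0},\eta_{1},\eta_{2}\) with weights \(\eta^{3-2s}\), \(\eta^{2-2s}\), \(\eta^{1-2s}\) (or exact integration of those factors, which in the identical-element case reduces everything to a one-dimensional analytic integrand in \(\eta_{3}\), as the appendix notes) --- so that the exponential Gauss estimate is applied only to the analytic factor \(\bar{\psi}_{k(i)}\bar{\psi}_{k(j)}/R(\vec{\eta})^{2+2s}\); its supremum on a fixed complex neighbourhood scales like \(h^{-2-2s}\), which together with the \(h^{4}\) (resp.\ \(h^{3}\cdot h\)) prefactors yields the claimed \(h^{2-2s}\). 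Your observation about the \(s\ge 1/2\) boundary case (only the interior shape function survives, changing the weight to \(\eta_{0}^{2-2s}(1-\eta_{0})\)) is correct but does not address this larger issue.
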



\end{document}